\documentclass[11pt]{article}
\usepackage{amsmath}
\usepackage{amssymb}
\usepackage{amsthm}
\usepackage{mathrsfs}
\usepackage{geometry}
\usepackage{stmaryrd}
\usepackage{tikz}
\usepackage{xcolor}
\usepackage{appendix}
\usepackage{titlesec}
\usepackage{authblk}

\usepackage{url}
\usepackage[colorlinks,
            linkcolor=blue,
            urlcolor=green,
            citecolor=red]{hyperref}

\numberwithin{equation}{section}
\numberwithin{figure}{section}

\newtheorem{theorem}{Theorem}
\newtheorem{lemma}[theorem]{Lemma} 
\newtheorem{corollary}[theorem]{Corollary}
\newtheorem{proposition}[theorem]{Proposition}

\newtheorem*{remark}{Remark}

\begin{document}
\title{\textbf{Oscillatory Integral Operators Along the Paraboloid and Applications}}
\author{Jianghao Zhang}
\newcommand{\Address}{{
\bigskip
\footnotesize
\textsc{Jianghao Zhang: School of Mathematics, University of Edinburgh, Edinburgh, EH9 3FD, UK}\par\nopagebreak
\textit{Email address}: \texttt{J.Zhang-395@sms.ed.ac.uk}
}}

\date{}

\maketitle

\begin{abstract}
We prove decay estimates for oscillatory integral operators along $(y_1, y_2, y_1^2+y_2^2)$ in the finite field and the continuous settings. In the continuous setting, our result allows a bi-parameter maximal modulation, which extends the one-parameter version previously established by Pierce and Yung. Our approach is different from theirs. Then we discuss three applications of our estimates: a power saving bound for certain pattern finding problem in finite fields, a short approach to $L^p$ boundedness of an one-parameter Pierce—Yung operator, and $L^p$ boundedness of a bi-parameter Pierce—Yung operator.
\end{abstract}



\section{Introduction}
\subsection{Background and motivation}
We start with two classical operators in harmonic analysis. The first one is Radon transform of the following type \big($x=(x_{\leq d-1}, x_d)\in \mathbb{R}^{d-1}\times \mathbb{R}$\big):
\begin{equation} \label{EqRadon}
T_{1}f(x):=\int_{\mathbb{R}^{d-1}} f(x_{\leq d-1}-y, x_d- |y|^2) K(y) dy,
\end{equation}
Here and below, $K$ is a Calderon--Zygmund kernel of the form $K(y)=\frac{\Omega(y)}{|y|^{d-1}}$ such that $\Omega$ is homogeneous of degree $0$ and $\int_{\mathbb{S}^{d-2}} \Omega=0$. It is well-known that $\|T_1f\|_p\lesssim \|f\|_p$ for $1<p<\infty$ \cite{Stein}. Second, we have the Stein--Wainger operator:
\begin{equation}  \label{EqSW}
T_2f(x):=\sup_{\tau=(\tau_{\alpha})_{2\leq |\alpha|\leq n}} |\int_{\mathbb{R}^{d}} f(x_{\leq d-1}-y, x_d-y_d) e^{2\pi iP_{\tau}(y)}K(y) dy|,
\end{equation}
where $P_{\tau}$ is the polynomial without linear terms:
\begin{equation*}
P_{\tau}(y):=\sum_{2\leq |\alpha|\leq n} \tau_{\alpha}y^{\alpha}. 
\end{equation*}
Stein and Wainger introduced \eqref{EqSW} in \cite{SWPolynomialCarleson} as a variant of the celebrated Carleson--Hunt theorem \cite{CarlesonCarleson, HuntCarleson}, which asserts that the maximal modulated singular integral
\begin{equation}  \label{EqCarleson}
\sup_{\tau} |\int_{\mathbb{R}} f(x-y) e^{2\pi i\tau y}K(y) dy|
\end{equation}
is $L^p$ bounded for $1<p<\infty$. We emphasize that \eqref{EqCarleson} does not fall under the scope of \eqref{EqSW} because the latter excludes the linear term in $P_{\tau}$. These two operators were handled by rather different methods in the above references. Using a $TT^*$ argument, Stein and Wainger established the $L^p$ boundedness of \eqref{EqSW}: $\|T_2f\|_p\lesssim \|f\|_p, 1<p<\infty$. After \cite{SWPolynomialCarleson}, there are further results which unify the Carleson--Hunt theorem and the Stein--Wainger theorem \cite{LiePolynomialCarleson, ZKPolynomialCarleson}. We refer the reader to these two articles and references therein for a comprehensive introduction.

Following \cite{PYPolynomialCarlesonRadon, AMPYPolynomialCarlesonRadon}, we study an operator which possesses both features of \eqref{EqRadon} and \eqref{EqSW}:
\begin{equation} \label{EqPY}
Tf(x):=\sup_{\tau\in \mathbb{R}} |\int_{\mathbb{R}^2} f(x_{\leq 2}-y, x_3-|y|^2) e^{2\pi i \tau(y_1^3+y_2^3)} K(y) dy|.
\end{equation}
Indeed, \cite{PYPolynomialCarlesonRadon, AMPYPolynomialCarlesonRadon} considered this type of operators in a very general form. Here we focus on the model case $Tf$ defined in \eqref{EqPY}.
\begin{theorem}[Pierce--Yung {\cite[Theorem 1.1]{PYPolynomialCarlesonRadon}}] \label{TheoremPY}
For $1<p<\infty$ and $T$ defined in \eqref{EqPY}, we have
\begin{equation*}
\|Tf\|_p\lesssim \|f\|_p,\ \ \ \forall f\in \mathcal{S}(\mathbb{R}^3).
\end{equation*}
\end{theorem}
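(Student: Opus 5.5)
We will decompose the kernel dyadically, $K=\sum_{j\in\mathbb{Z}}K_j$ with $K_j(y)=K(y)\psi(2^{-j}y)$, so that each $K_j$ is supported in $|y|\sim 2^j$ and has mean zero. By the scaling $(y_1,y_2,y_3)\mapsto(2^jy_1,2^jy_2,2^{2j}y_3)$, a dilation of the operator is equivalent to replacing $\tau$ by $2^{3j}\tau$. So for each $x$ we can split the supremum over $\tau$ by the size of $|\tau|2^{3j}$. The part where $|\tau|2^{3j}\lesssim 1$ is handled by Taylor expanding $e^{2\pi i\tau(y_1^3+y_2^3)}=1+O(|\tau|2^{3j})$. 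This gives the truncated parabolic Hilbert/Radon transform $T_1$ of \eqref{EqRadon}, together with its maximal truncations (bounded by the classical theory in \cite{Stein}), plus an error. The error is dominated by $\sum_j\min(1,|\tau|2^{3j})\,\mathcal{M}_jf$, where $\mathcal{M}_j$ is the averaging operator along the paraboloid at scale $2^j$. That sum is controlled by the maximal function along the paraboloid, which is $L^p$ bounded.

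The remaining piece is $\sup_\tau\bigl|\sum_{j:\,|\tau|2^{3j}\ge 1}T_{j,\tau}f\bigr|$, where $T_{j,\tau}$ has kernel $K_j$ and the oscillation. For this we aim for a single-scale decay estimate. After rescaling to $j=0$ and writing $\lambda=|\tau|2^{3j}\geq1$, we want
\[
\Bigl\|\sup_{\lambda\sim 2^{m}}|T_{0,\lambda}f|\Bigr\|_2\lesssim 2^{-\delta m}\|f\|_2 .
\]
We also need the trivial $L^p$ bound with at most a logarithmic loss in $m$, obtained from the paraboloid maximal function together with Sobolev embedding in $\lambda$. Interpolating then gives geometric decay in $L^p$ for $p$ near $2$. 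To cover all $1<p<\infty$ we would additionally use a Littlewood--Paley decomposition in $x_3$ adapted to the scale $2^{2j}$. The terms where the frequency in $x_3$ is mismatched with $2^{-2j}$ are then handled by the standard square-function/almost-orthogonality arguments for Radon transforms along the paraboloid. Only the diagonal terms feed into the oscillatory estimate, and summing over $m$ finishes the proof.

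The main obstacle is the $L^2$ decay estimate with the supremum over $\lambda$. Following the abstract, we would not use a $TT^*$ over all $\tau$ as in Stein--Wainger. Instead we would first linearize $\tau=\tau(x)$ and discretize $\lambda$ to a $2^{-m}$-net in $[2^m,2^{m+1}]$; the derivative in $\lambda$ costs a factor $2^{m}\cdot 2^{-m}=O(1)$ per grid cell. We then take the Fourier transform in $x$, so that $T_{0,\lambda}$ becomes a multiplier in $(x_{\le2},x_3)=(\xi,\eta)$ with symbol
\[
\int e^{2\pi i(\xi\cdot y+\eta|y|^2+\lambda(y_1^3+y_2^3))}K_0(y)\,dy .
\]
This symbol factors as a product of two one-dimensional oscillatory integrals with cubic phase $\lambda y_i^3+\eta y_i^2+\xi_iy_i$, up to the non-product cutoff, which we handle by Fourier expanding $K_0$. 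Van der Corput with the third derivative gives $|m_\lambda|\lesssim\lambda^{-1/3}$ uniformly in $\xi,\eta$. The sup over the $2^{2m}$ grid points of $\lambda$ would lose too much if handled crudely. We would therefore run a $TT^*$ argument in the linearized form, on the kernel of $T_{0,\lambda(x)}T_{0,\lambda(x')}^*$. We expect this kernel to decay like $(1+\lambda|\lambda(x)-\lambda(x')|)^{-c}$ times a bounded function of $x-x'$ via a second-derivative van der Corput in $y$. The key point to verify is that the nondegeneracy of $(y_1^3+y_2^3)$ relative to the paraboloid (its Hessian is not a multiple of that of $|y|^2$) prevents a resonant set where this decay fails. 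Securing this uniformity is, we expect, the hardest step.
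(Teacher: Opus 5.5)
Your outer reduction is the paper's. You linearize $\tau=\tau_x$, split $K=\sum_j K_j$, treat $|\tau_x|2^{3j}\lesssim 1$ by Taylor expansion with $M^{\Gamma}$ and $M^{\Gamma,K}$, and interpolate an $L^2$ bound $2^{-cm}$ for the $m$-th oscillatory piece against the trivial bound $\lesssim M^{\Gamma}f$. That trivial bound is uniform in $\tau$ and holds for every $1<p<\infty$. So no Sobolev embedding is needed, and interpolation already covers all $p$; the Littlewood--Paley step is not about the range of $p$.

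\textbf{First gap: passing from one scale to all scales.} The Littlewood--Paley step is needed to get an $L^2$ bound for $\sum_j 1_{\{|\tau_x|2^{3j}\sim 2^m\}}T_{j,\tau_x}f$, which involves all scales. This is the maximal-in-scale difficulty, and the part you call standard does not work as stated. After linearization the operators are not translation invariant, so there are no multipliers for Duoandikoetxea--Rubio de Francia type square-function arguments. A decomposition in $x_3$ alone also fails. With $\xi_{\leq 2}$ uncontrolled, the phase $\tau P(y)-\xi\cdot\Gamma(y)$ can be stationary on $|y|\sim 2^j$ for $x_3$-frequencies both far below and far above $2^{-2j+m}$. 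So the ``mismatched'' pieces get neither pointwise control nor decay. The high-frequency regions $\{|\xi_3|\gtrsim 2^{-2j+m}\}$ are also nested in $j$, so orthogonality across scales would need decay in the frequency offset, which a bound $2^{-\delta m}$ does not give.

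The paper supplies two ideas you are missing.
(i) Since $|\tau_x|\sim 2^{-3s+l}$ determines $s$, only one scale is active at each $x$. The squared $L^2$ norm therefore splits into a sum over $s$ of single-scale norms, and a lemma that is non-maximal in scale suffices.
(ii) It uses a two-parameter paraproduct $\psi_{-s+k}(\xi_{\leq 2})\psi_{-2s+k'}(\xi_3)$. The pieces with $\max\{k,k'\}\leq l-C$, or with $\max\{k,k'\}\geq l+C$ and $|k-k'|\geq C$, are dominated pointwise by $2^{-cl}M^{\text{str}}f$ through non-stationary phase. On the remaining pieces the Fourier supports overlap boundedly in $s$: for $|k-k'|<C$ the ratio $|\xi_3|/|\xi_{\leq 2}|\approx 2^{-s}$ pins down $s$, and for $\max\{k,k'\}\approx l$ one of $|\xi_{\leq 2}|\approx 2^{-s+l}$ or $|\xi_3|\approx 2^{-2s+l}$ does. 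The single-scale lemma applied at each $s$ then sums by Plancherel.

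\textbf{Second gap: the decay mechanism in your single-scale $TT^*$.} The bound you expect, $(1+\lambda|\lambda(x)-\lambda(x')|)^{-c}$ times a bounded function of $x-x'$, gives nothing when $\tau$ is constant on large sets, and that case is allowed.

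In the paper, the kernel at $(x,x-z)$ is an integral over the one-dimensional fibre $\{\Gamma(y)-\Gamma(y')=z\}$. It is parametrized as $y=tv^{\perp}+sv$ with $w=z_{\leq 2}$ and $v=w/|w|$. The resulting cubic in $t$ has $t^3$-coefficient $(\tau_x-\tau_{x-z})\big((v_1^{\perp})^3+(v_2^{\perp})^3\big)$. Its $t^2$-coefficient contains $\tau_{x-z}\big((v_1^{\perp})^2w_1+(v_2^{\perp})^2w_2\big)$.

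Discard an exceptional set of $w$ of measure $O(\lambda^{-c_1/3})$ on which either geometric factor is $\leq \lambda^{-c_1}$. Off that set there is a dichotomy. If $|\tau_x-\tau_{x-z}|>\lambda^{2c_1}$, the cubic coefficient exceeds $\lambda^{c_1}$. Otherwise the quadratic coefficient is $\gtrsim \lambda^{1-c_1}$, because $|\tau_{x-z}|>\lambda$. Van der Corput in $t$ plus Schur's test then finish.

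Your ``nondegeneracy relative to the paraboloid'' points toward this. But the gain must exist even when $\tau_x=\tau_{x-z}$, where it comes from the quadratic term, and the exceptional set must be identified explicitly.
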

\noindent Pierce and Yung proved Theorem \ref{EqPY} by establishing the following decay estimate for oscillatory integral operators along $(y_1, y_2, |y|^2)$:
\begin{lemma}[Pierce--Yung {\cite[Theorem 1.2]{PYPolynomialCarlesonRadon}}] \label{LemmaMaximalMajorArc}
Let $\lambda\geq 1$. Let $P_3\in \mathbb{R}[Y_1, Y_2]$ be a homogeneous polynomial of degree $3$. Then there exists an absolute constant $c>0$ such that
\begin{equation*}  
\big{\|}\sup_{R>0} \sup_{|\tau|\sim \lambda R^{-3}} |\int_{\mathbb{R}^2} f(x_{\leq 2}-y, x_3-|y|^2) e^{2\pi i \tau P_3(y)} \rho_R (y) dy| \big{\|}_2\lesssim \lambda^{-c}\|f\|_2, \ \ \ \forall f\in \mathcal{S}(\mathbb{R}^3).
\end{equation*}
Here $\rho_R(y):=\frac{1}{R^2}\rho(\frac{y}{R})$, and $\rho\in \mathcal{S}(\mathbb{R}^3)$ is supported in $\{|y|<5\}$ with $\|\rho\|_{C^1}\leq 1$.
\end{lemma}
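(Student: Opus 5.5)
We reduce the bi-level supremum to a single scale by rescaling, and then prove a decay estimate for a one-scale maximal operator in the modulation parameter.

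\emph{Reduction to one scale.} Since $P_3$ is homogeneous of degree $3$, the anisotropic dilation $(x_{\leq 2},x_3)\mapsto (Rx_{\leq 2},R^2x_3)$ conjugates the operator at scale $R$ with $|\tau|\sim\lambda R^{-3}$ to the operator at scale $1$ with $|\tau|\sim \lambda$. Write $A_{R,\tau}$ for the inner integral. For $R=2^k$, consider
\[
\mathcal{M}_k f:=\sup_{|\tau|\sim\lambda 2^{-3k}}|A_{2^k,\tau}f|.
\]
The plan is to show $\|\sup_k \mathcal{M}_k f\|_2\lesssim \lambda^{-c}\|f\|_2$. Non-dyadic $R$ are absorbed by enlarging the $\tau$-range slightly and replacing $\rho$ by $\rho(\cdot\, 2^k/R)$. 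To pass from single scales to $\sup_k$, decompose $f=\sum_j \Delta_{j}f$ with anisotropic Littlewood--Paley projections adapted to the dilations. For the piece at frequency scale $2^{-k+\ell}$ (relative to scale $k$), we aim for
\[
\|\mathcal{M}_k \Delta_{k-\ell} f\|_2\lesssim \lambda^{-c}2^{-c|\ell|}\|f\|_2 .
\]
Summing over $\ell$ and taking the square function over $k$ then yields the full bound, since $\sup_k\le(\sum_k|\cdot|^2)^{1/2}$ and $\sum_k\|\Delta_{k-\ell}f\|_2^2\lesssim\|f\|_2^2$.

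\emph{Single-scale estimate.} After rescaling, take $k=0$. To linearize the supremum in $\tau$, use the Sobolev embedding in $\tau$ on an interval of length $\sim\lambda$:
\[
\sup_{\tau}|F(\tau)|^2\lesssim \lambda^{-1}\int|F|^2\,d\tau+\int|F||\partial_\tau F|\,d\tau .
\]
Here $\partial_\tau$ brings down a factor $P_3(y)=O(1)$, so it costs nothing. It therefore suffices to bound
\[
\lambda^{-1}\int_{|\tau|\sim\lambda}\|A_{1,\tau}\Delta f\|_2^2\,d\tau
\]
with gain $\lambda^{-c}$, together with the analogous mixed term. In frequency variables $\xi=(\xi',\xi_3)$, $A_{1,\tau}$ is the multiplier
\[
m_\tau(\xi)=\int e^{2\pi i(\tau P_3(y)-\xi'\cdot y-\xi_3|y|^2)}\rho(y)\,dy,
\]
so by Plancherel we need $\lambda^{-1}\int_{|\tau|\sim\lambda}|m_\tau(\xi)|^2d\tau\lesssim\lambda^{-c}$ uniformly in $\xi$. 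Do not settle for pointwise decay here: stationary points of the phase $\Phi=\tau P_3-\xi'\cdot y-\xi_3|y|^2$ may be degenerate for special $(\tau,\xi)$.

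\emph{Key step and obstacle.} Expanding $|m_\tau|^2$ and integrating in $\tau$ first gives
\[
\iint \rho(y)\overline{\rho(z)}\,e^{2\pi i(-\xi'\cdot(y-z)-\xi_3(|y|^2-|z|^2))}\,\widehat{\chi}_\lambda\bigl(P_3(y)-P_3(z)\bigr)\,dy\,dz,
\]
where $\widehat\chi_\lambda$ is concentrated where $|P_3(y)-P_3(z)|\lesssim\lambda^{-1}$. When $P_3$ is nondegenerate this set has measure $\lambda^{-c}$ by a sublevel set estimate, which gives the bound. The hard case is the zero polynomial and, more generally, a degenerate $P_3$. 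For instance, $P_3=L(y)|y|^2$ with $L$ linear lies in the ideal generated by the phase's other terms, so modulation can be undone by changing $\xi$. Then no decay can hold uniformly, and the constant must be allowed to depend on $P_3$. I expect this to be the main obstacle. My first attempt would be to use the oscillation in $\xi$ as well, for example by combining van der Corput in $\tau$ with the nondegeneracy of the Hessian of $\Phi$ when $\lambda\gg|\xi|$. Failing that, I would invoke a Phong--Stein type decay for $\lambda|P_3(y)-P_3(z)-\text{(terms in the span of }y_i-z_i,\,|y|^2-|z|^2)|$, which requires $P_3\notin\mathrm{span}\{y_i|y|^2\}$. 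For $\ell\neq0$ we use the scale separation: integration by parts in $y$ when $|\xi'|\gg\lambda$, and the cancellation $\int\rho$ when $|\xi|\ll1$.
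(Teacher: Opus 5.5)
The paper does not prove this lemma. It quotes it from Pierce--Yung, whose argument (as the introduction summarizes it) compares the parabolic maximal operator with its flat Stein--Wainger analogue, controls the difference by a square function, and relies on the Stein--Wainger $TT^*$ theorem for the supremum over the modulation. Your proposal has a genuine gap at exactly that point, the single-scale supremum over $\tau$.

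The Sobolev embedding on an interval $I$ of length $\sim\lambda$ produces the mixed term $\int_I |F|\,|\partial_\tau F|\,d\tau$ with \emph{no} factor $\lambda^{-1}$ in front. After Cauchy--Schwarz and Plancherel, what you actually need is $\sup_\xi\int_I|m_\tau(\xi)|^2\,d\tau\lesssim\lambda^{-c}$, i.e.\ $|m_\tau(\xi)|\lesssim\lambda^{-1/2-c}$ on average in $\tau$.
\begin{itemize}
\item The condition you wrote, $\lambda^{-1}\int_I|m_\tau|^2\,d\tau\lesssim\lambda^{-c}$, has the wrong normalization and is trivial. Van der Corput with three derivatives in a direction $e$ with $P_3(e)\neq0$ gives $|m_\tau(\xi)|\lesssim\lambda^{-1/3}$ uniformly in $\xi$, so degenerate stationary points are not the issue.
\item Your sublevel-set computation, done with absolute values, only gives $\int_I|m_\tau|^2\,d\tau\lesssim\lambda\,|\{(y,z):|P_3(y)-P_3(z)|\lesssim\lambda^{-1}\}|$. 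This is never better than $O(1)$, since that set has measure $\gtrsim\lambda^{-1}$.
\item The needed bound is false for $P_3=y_1^3$, the case the paper itself invokes in Section~\ref{SectionPYBi-Parameter}. Take $\xi=(\xi_1,0,0)$ with $\xi_1\sim\lambda$ positive, and a suitable admissible $\rho$ (e.g.\ $\rho\geq 0$, supported in $\{y_1>0\}$ and positive where $y_1\sim 1$). The $y_1$-integral has a nondegenerate stationary point at $y_1\sim1$ and the $y_2$-integral does not oscillate. Hence $|m_\tau(\xi)|\sim|\partial_\tau m_\tau(\xi)|\sim\lambda^{-1/2}$ for all $\tau\in[\lambda,2\lambda]$.
\end{itemize}
For $\hat f$ concentrated near such $\xi$, your inequality yields only $\|\sup_\tau|A_{1,\tau}f|\|_2\lesssim\|f\|_2$, although the operator is of size $\lambda^{-1/2}$ on this $f$. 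This $\xi$ lies in the resonant shell, so no Littlewood--Paley separation removes it. The supremum has to be linearized, $\tau=\tau_x$, and fed into a $TT^*$ argument in which both $\tau_x$ and $\tau_{x-z}$ enter the kernel. That is what Stein--Wainger do, and what Lemma~\ref{LemmaContinuousMajorArc} does here; Lemma~\ref{LemmaContinuousMajorArcBi-Parameter} with $\tau_{2,x}\equiv0$ covers $y_1^3$. Averaging over $\tau$ first discards exactly the information that produces the gain.

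There are two further problems.

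First, the obstruction you single out, $P_3=L(y)|y|^2$, is not one, and your claim that no decay can hold there is unfounded. On the lines $y=tv^\perp+sv$ used in the paper, the coefficients of $t^3$ and $t$ in $\tau_xP_3(tv^\perp+sv)-\tau_{x-z}P_3(tv^\perp+(s-|w|)v)$ are $(\tau_x-\tau_{x-z})L(v^\perp)$ and $(\tau_xs^2-\tau_{x-z}(s-|w|)^2)L(v^\perp)$. Outside the small set of directions where $L(v^\perp)$ is small, the first is large when $|\tau_x-\tau_{x-z}|$ is large. Otherwise the second is $\approx\tau_x|w|(2s-|w|)L(v^\perp)$. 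So the exceptional-set argument of Lemma~\ref{LemmaContinuousMajorArc} goes through. The statement only needs $P_3\neq0$, with constants depending on $P_3$. In any case you leave the case you call hard unresolved, so the proposal is incomplete even on its own terms.

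Second, the low-frequency pieces cannot be handled by ``the cancellation $\int\rho$'', because $\rho$ is not assumed to have mean zero. After rescaling, for $\ell\to-\infty$ one has $A_{1,\tau}\Delta_{-\ell}f\approx m_\tau(0)\,\Delta_{-\ell}f$, where $m_\tau(0)=\int e^{2\pi i\tau P_3}\rho$ is of exact order $\lambda^{-2/3}$ when, say, $P_3=y_1^3+y_2^3$ and $\rho(0)\neq0$. So there is no factor $2^{-c|\ell|}$, and your sum over $\ell$ does not converge. These pieces must instead be grouped into $\phi_{k-\ell_0}(D)f$ and dominated pointwise by $\lambda^{-c}M^{\mathrm{str}}f$ through kernel estimates, as in the paper's treatment of $\Upsilon_{<}$.
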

\noindent Below, we will borrow the terminology ``major arc lemma'' from additive combinatorics \cite[Lemma 4.61]{KMPWWMultilinearPolynomialErgodic} and refer to estimates like Lemma \ref{LemmaMaximalMajorArc} as major arc lemmas \footnote{When we apply this type of estimates directly, the oscillation parameter $\lambda>0$ usually lies in the minor arc (i.e. being large). Thus it makes sense to call them minor arc lemmas instead. However, in the additive combinatorics setting, we often apply them in the contrapositive, so that the conclusion becomes ``$\lambda$ lies in the major arc''. This is why one uses the terminology ``major arc lemma'' in additive combinatorics. In our work, we chose to follow the terminology from additive combinatorics literature and use ``major arc lemma'' for all such estimates.}. In particular, we use the name maximal major arc lemma for Lemma \ref{LemmaMaximalMajorArc} because there is a supremum over $R>0$.
\noindent Once we know Lemma \ref{LemmaMaximalMajorArc} for $P_3(y):=y_1^3+y_2^3$, it is straightforward to deduce Theorem \ref{TheoremPY}. See \cite[Section 5]{PYPolynomialCarlesonRadon} or \cite[Section 5]{SWPolynomialCarleson} for this type of deduction. Pierce and Yung's approach \cite{PYPolynomialCarlesonRadon} to Lemma \ref{LemmaMaximalMajorArc} is quite involved. As explained in \cite{PYPolynomialCarlesonRadon}, one cannot directly apply the $TT^*$ argument introduced in \cite{SWPolynomialCarleson} to establish Lemma \ref{LemmaMaximalMajorArc} due to its maximal Radon behavior (i.e. the supremum over $R$ above). Instead, they compare the operator in Lemma \ref{LemmaMaximalMajorArc} with its linear-average analogue corresponding to \eqref{EqSW} , carefully control the difference by a square function, and finally apply the known result for that linear-average analogue established in \cite{SWPolynomialCarleson}. 

\subsection{A new major arc lemma and applications}
We propose a different approach to Theorem \ref{TheoremPY}. We still aim to establish a major arc lemma. However, we do not take supremum over $R$ as Lemma \ref{LemmaMaximalMajorArc}. This enables us to directly use the $TT^*$ method without comparing our parabolic average with the linear average. By carefully exploiting the information from this direct $TT^*$ approach, we are able to establish the following new bi-parameter non-maximal major arc lemma:
\begin{lemma}  \label{LemmaContinuousMajorArcBi-Parameter}
Let $\lambda\gtrsim 1$. Let $\tau_{1, (\cdot)}, \tau_{2, (\cdot)}: \mathbb{R}^3\rightarrow \mathbb{R}$ be arbitrary measurable functions such that $\max\{|\tau_{1, x}|, |\tau_{2, x}| \}>\lambda, \forall x\in \mathbb{R}^3$. Let
\begin{equation*}
Af(x):=\int_{\mathbb{R}^2} f(x_{\leq 2}-y, x_3-|y|^2) e^{2\pi i (\tau_{1, x}y_1^3+\tau_{2, x}y_2^3)} \rho (y) dy,
\end{equation*}
where $\rho\in \mathcal{S}(\mathbb{R}^3)$ is supported in $\{|y|<5\}$ with $\|\rho\|_{C^1}\leq 1$. Then there exists an absolute constant $c>0$ such that
\begin{equation*}
\|Af\|_2\lesssim \lambda^{-c}\|f\|_2, \ \ \ \forall f\in L^2(\mathbb{R}^3).
\end{equation*}
\end{lemma}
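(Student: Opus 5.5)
We prove Lemma \ref{LemmaContinuousMajorArcBi-Parameter} by a $TT^*$ argument that exploits the fact that the parameters $\tau_{1,x},\tau_{2,x}$ enter only through the output variable $x$.

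\textbf{Reductions.} First, by a standard decomposition of $\{\max(|\tau_1|,|\tau_2|)>\lambda\}$ into dyadic pieces, and up to a harmless loss, we may assume $|\tau_{1,x}|\sim \mu$ for some dyadic $\mu\geq\lambda$ with $|\tau_{2,x}|\lesssim \mu$; the symmetric case is handled identically. It then suffices to prove a bound $\mu^{-c}$ and sum over $\mu$. Second, since $\rho$ is compactly supported and the kernel is local, we may localize $x$ to unit cubes and sum, by almost orthogonality of supports.

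\textbf{The $TT^*$ computation.} We study $AA^*$. Write $\Phi_x(y)=\tau_{1,x}y_1^3+\tau_{2,x}y_2^3$. Then
\begin{equation*}
AA^*g(x)=\int g(x')\,\mathcal K(x,x')\,dx',
\end{equation*}
where the kernel $\mathcal K(x,x')$ is given by integrating over pairs $(y,y')$ with $x-x'=(y-y',\,|y|^2-|y'|^2)$ the phase $\Phi_x(y)-\Phi_{x'}(y')$. Parametrize by $y'=y-h$ with $h=x_{\le2}-x'_{\le2}$. The constraint becomes $2y\cdot h-|h|^2=x_3-x'_3$, which restricts $y$ to a line $\ell$ orthogonal to $h$. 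Hence
\begin{equation*}
\mathcal K(x,x')=|h|^{-1}\int_{\ell} e^{2\pi i(\Phi_x(y)-\Phi_{x'}(y-h))}\rho(y)\overline{\rho(y-h)}\,d\sigma(y).
\end{equation*}
The key point is that $\Phi_x(y)-\Phi_{x'}(y-h)$, restricted to $\ell$, is a one-variable cubic. Its cubic coefficient is $(\tau_{1,x}-\tau_{1,x'})u_1^3+(\tau_{2,x}-\tau_{2,x'})u_2^3$, where $u\perp h$ is the unit direction of $\ell$, and its quadratic coefficient involves $\tau_{i,x'}h_i$. Van der Corput applied to the derivative of the highest nondegenerate order gives
\begin{equation*}
|\mathcal K(x,x')|\lesssim |h|^{-1}\min\{1,\ M(x,x')^{-1/3}\},
\end{equation*}
where $M$ is the largest of the cubic and quadratic coefficients. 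A $C^1$ amplitude loss is acceptable since $\|\rho\|_{C^1}\le1$.

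\textbf{Main obstacle.} The difficulty is that $\tau_{x}$ and $\tau_{x'}$ could nearly cancel, so the cubic coefficient alone may be small. We use the quadratic coefficient, which is roughly $3\sum_i\tau_{i,x'}h_iu_i^2$ (plus similar terms), and which is of size $\mu|h|$ unless $h$ lies in a small exceptional angular set. If the cubic and quadratic coefficients are both small, then $|\tau_{1,x}-\tau_{1,x'}|$ and $|\tau_{2,x}-\tau_{2,x'}|$ are not simultaneously controllable, because the parameters are arbitrary measurable functions. We therefore avoid relying on differences entirely. Since $|\tau_{1,x'}|\sim\mu$, the coefficient involving $\tau_{x'}$ alone can vanish only when $h$ lies in a set of directions and magnitudes of measure $\mu^{-\delta}$, namely $|h|\lesssim\mu^{-\delta}$ or the angle of $h$ lies in an $O(\mu^{-\delta})$-neighbourhood of finitely many bad directions.

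\textbf{Schur's test.} We bound $\sup_x\int|\mathcal K(x,x')|\,dx'$ by splitting the $x'$-integral into two regions.
\begin{itemize}
\item On the exceptional region, we use the trivial bound $|h|^{-1}$. Integrating over $h\in\mathbb R^2$ with $|h|^{-1}$ weight, and over $x_3'$ in a bounded range, this contributes $\mu^{-\delta}$.
\item Elsewhere, we use the decay $(\mu^{1-\delta})^{-1/3}$.
\end{itemize}
The same bound holds for the transposed integral $\sup_{x'}\int|\mathcal K(x,x')|\,dx$, since the roles of $x$ and $x'$ are symmetric after swapping which variable carries the nonvanishing coefficient. We expect most of the care to go into this step: the exceptional-set analysis uniform in the other parameter. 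Optimizing $\delta$ gives $\|AA^*\|\lesssim\mu^{-c}$, hence $\|A\|\lesssim\lambda^{-c/2}$.
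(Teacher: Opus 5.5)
Your reductions and the $TT^*$ kernel are fine and essentially match the paper: its Fourier-side kernel $\widetilde{\mathcal K}$ is your $\mathcal K$ mollified by $\chi(D/R)$, and your line parametrization is its $(w,s,t)$ change of variables. The exceptional-set step, however, does not work.

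With $h=r(\cos\theta,\sin\theta)$ and $y=tu+s\hat h$, the quadratic coefficient is exactly
\[
3\sum_i u_i^2\hat h_i\bigl(s\,\tau_{i,x}-(s-r)\,\tau_{i,x'}\bigr)=3\sum_i \tau_{i,x'}u_i^2h_i+3s\sum_i(\tau_{i,x}-\tau_{i,x'})u_i^2\hat h_i .
\]
So you cannot ``avoid differences'': the second term is present. In the bi-parameter case, smallness of the cubic coefficient controls only one linear combination of the two differences, so the one-parameter case split (large difference $\Rightarrow$ cubic term, small difference $\Rightarrow$ quadratic term) is unavailable.

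Even the first term equals $r\sin\theta\cos\theta\,(\tau_{1,x'}\sin\theta+\tau_{2,x'}\cos\theta)$. Its zero direction $\tan\theta=-\tau_{2,x'}/\tau_{1,x'}$ moves with $x'$, so it is not one of finitely many fixed directions.

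Worse, for fixed $x$ the cubic and quadratic coefficients are two affine functions of the two free unknowns $(\tau_{1,x'},\tau_{2,x'})$. The system has determinant $-(s-r)\sin^2\theta\cos^2\theta\neq 0$, and $x'\leftrightarrow(h,s)$ is a bijection. Hence one can choose the measurable function $\tau$ so that both coefficients vanish on a positive-measure set of $x'$. For example, take $\tau_x=(\mu,0)$, $\theta=\pi/4$, $s<0$. The solution is then $\tau_{x'}=\mu\bigl(\tfrac{r+2|s|}{2(r+|s|)},-\tfrac{r}{2(r+|s|)}\bigr)$, which is compatible with your normalisation $|\tau_{1,x'}|\sim\mu\geq|\tau_{2,x'}|$. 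On that set your bound $|h|^{-1}\min\{1,M^{-1/3}\}$ gives no decay, so Schur's test yields nothing. (The true kernel still decays there, but only because of the linear coefficient.)

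The missing idea is to use the linear coefficient too, together with the asymmetry built into Schur's test. With $x$ fixed, $\tau_x=\lambda b$ is a constant vector while $\tau_{x'}$ is completely arbitrary, so the exceptional set must be defined without reference to $\tau_{x'}$. The paper writes $(a_3,a_2,a_1)^T=\lambda\,\mathfrak{L}_{w,s}(\tau'_1,\tau'_2,1)^T$ with $\tau'_j=-\tau_{j,x'}/\lambda$. Here $\mathfrak{L}_{w,s}$ is a $3\times 3$ matrix with $O(1)$ entries, depending only on $b$ and the integration variables $(w,s)$ (your $(h,s)$).

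Because the last entry of the vector is $1$, the condition $|\det\mathfrak{L}_{w,s}|>\lambda^{-c_1}$ forces $|(a_3,a_2,a_1)|\gtrsim\lambda^{1-c_1}$ for every value of $\tau_{x'}$. The polynomial van der Corput lemma, which is governed by all non-constant coefficients, then gives $|I(w,s)|\lesssim\lambda^{-2c}$.

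The exceptional set $\{|\det\mathfrak{L}_{w,s}|\le\lambda^{-c_1}\}$ lives in $(w,s)$-space rather than in $h$ alone, since the determinant $-r(s-r)\cos^2\theta\sin^2\theta\,(\cdots)$ genuinely depends on $s$. Its measure is shown to be small from this explicit factorisation. After the coarea change of variables, which cancels your $|h|^{-1}$, this is exactly what Schur's test needs.
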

\noindent In \cite{SWPolynomialCarleson, PYPolynomialCarlesonRadon, AMPYPolynomialCarlesonRadon}, the authors applied $TT^*$ in the physical space. An interesting aspect of our approach is that we take the Fourier transform before using $TT^*$, which gives a more manageable kernel.

Then we will give a new framework to deduce $L^p$ boundedness of Pierce—Yung type operators from non-maximal major arc lemmas. This provides a short proof of Theorem \ref{TheoremPY}. Moreover, we can establish the following new result:
\begin{theorem}  \label{TheoremPYBi-Parameter}
Define the bi-parameter Pierce--Yung operator:
\begin{equation*}
\mathfrak{T}f(x):=\sup_{\tau_1, \tau_2\in \mathbb{R}} |\int_{\mathbb{R}^2} f(x_{\leq 2}-y, x_3-|y|^2) e^{2\pi i (\tau_1 y_1^3+\tau_2 y_2^3)} K(y) dy|.
\end{equation*}
Then for $1<p<\infty$ we have
\begin{equation*}
\|\mathfrak{T}f\|_p\lesssim \|f\|_p,\ \ \ \forall f\in \mathcal{S}(\mathbb{R}^3).
\end{equation*}
\end{theorem}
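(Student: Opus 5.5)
We prove Theorem \ref{TheoremPYBi-Parameter} by a Stein--Wainger type decomposition, using Lemma \ref{LemmaContinuousMajorArcBi-Parameter} in place of a maximal major arc lemma. Write $K=\sum_{j\in\mathbb{Z}} K_j$ with $K_j(y)=2^{-2j}\psi_j(2^{-j}y)$, where each $\psi_j$ is supported in $\{1/2\le|y|\le 2\}$, is uniformly $C^1$, and satisfies $\int\psi_j=0$. Parabolic rescaling $(x_{\le2},x_3)\mapsto(2^jx_{\le2},2^{2j}x_3)$ turns the $j$-th piece with modulation $(\tau_1,\tau_2)$ into the unit scale operator with modulation $(2^{3j}\tau_1,2^{3j}\tau_2)$. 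Linearize the supremum with measurable $\tau_{1,x},\tau_{2,x}$. For each $x$ let $j_0(x)$ be the integer with $\max\{|\tau_{1,x}|,|\tau_{2,x}|\}\sim 2^{-3j_0(x)}$. We split the sum over $j$ into $j\le j_0(x)$ (small oscillation) and $j>j_0(x)$ (large oscillation).

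\emph{Small oscillation part.} For $j\le j_0(x)$ we have $|\tau_{i,x}y_i^3|\lesssim 2^{3(j-j_0)}$ on $\operatorname{supp}K_j$. Taylor expand $e^{2\pi i(\tau_1y_1^3+\tau_2y_2^3)}=1+O(2^{3(j-j_0)})$. The main term is the truncated Radon transform $\sum_{j\le j_0(x)}\int f(x_{\le2}-y,x_3-|y|^2)K_j(y)\,dy$. It is controlled by the maximal truncated parabolic Hilbert/Radon transform, which is $L^p$ bounded by classical results \cite{Stein}. The error is bounded by $\sum_{j\le j_0}2^{3(j-j_0)}\,\mathcal{M}f(x)$, where $\mathcal{M}$ is the parabolic maximal function. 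This sum is $\lesssim \mathcal{M}f$ and hence bounded on $L^p$ for $1<p<\infty$.

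\emph{Large oscillation part.} Write $j=j_0(x)+k$ with $k\ge1$, and set $A_kf(x):=\int f(x_{\le2}-y,x_3-|y|^2)e^{2\pi i(\tau_{1,x}y_1^3+\tau_{2,x}y_2^3)}K_{j_0(x)+k}(y)\,dy$. It suffices to show $\|A_kf\|_p\lesssim 2^{-\epsilon_pk}\|f\|_p$ and sum in $k$.

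\emph{The $L^2$ bound.} Decompose $\mathbb{R}^3$ into the sets $E_m=\{x:j_0(x)=m\}$, so that $A_kf=\sum_m 1_{E_m}A_{k,m}f$, where $A_{k,m}$ uses the scale $j=m+k$ only. After rescaling, $A_{k,m}$ restricted to $E_m$ is exactly an operator of the form in Lemma \ref{LemmaContinuousMajorArcBi-Parameter}, with $\max|\tau_{i,x}|\sim 2^{3k}$, so $\|1_{E_m}A_{k,m}f\|_2\lesssim 2^{-ck}\|f\|_2$. To sum over $m$ we localize in frequency: $A_{k,m}f\approx A_{k,m}(P_{m+k}f)$, where $P_{l}$ is a parabolic Littlewood--Paley projection to frequencies $|\xi_{\le2}|\sim2^{-l}$ or $|\xi_3|\sim 2^{-2l}$ (smooth variants). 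The tails $A_{k,m}(I-\tilde P_{m+k})f$ are handled using the mean zero of $\psi_j$ and the curvature of the paraboloid, which give Fourier decay $2^{-\delta|l-(m+k)|}$. This decay is summable and controlled by maximal functions. Since the $E_m$ are disjoint,
\[
\|A_kf\|_2^2\le\sum_m\|1_{E_m}A_{k,m}\tilde P_{m+k}f\|_2^2+\text{tails}\lesssim 2^{-2ck}\sum_m\|\tilde P_{m+k}f\|_2^2\lesssim 2^{-2ck}\|f\|_2^2 .
\]

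\emph{Interpolation.} Trivially $|A_kf|\lesssim\mathcal{M}f$, which gives $\|A_kf\|_p\lesssim\|f\|_p$ uniformly in $k$ for $1<p\le\infty$. Interpolating with the $L^2$ decay yields geometric decay for $1<p<\infty$. The interpolation is legitimate because $\tau$ is fixed once linearized, so each $A_k$ is a linear operator.

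The main obstacle is the $L^2$ step: justifying the frequency localization $A_{k,m}\approx A_{k,m}\tilde P_{m+k}$ with errors decaying in $|l-(m+k)|$ uniformly over arbitrary measurable $\tau_x$. The modulation depends on $x$, so one cannot commute multipliers. I would first try to handle this by applying Lemma \ref{LemmaContinuousMajorArcBi-Parameter}-type $TT^*$ reasoning on $f$ already frequency-localized at a mismatched scale. This should give an extra gain from non-stationary phase in $\xi_3$ versus the $|y|^2$ curvature. Such a gain is needed for the orthogonality to close.
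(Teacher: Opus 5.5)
Your outer structure matches the paper's. You linearize, treat the scales $j\le j_0(x)$ with $M^{\Gamma,K}$ and $M^{\Gamma}$, use disjointness of the $E_m$ to write $\|A_kf\|_2^2=\sum_m\|1_{E_m}A_{k,m}f\|_2^2$, apply a rescaled Lemma~\ref{LemmaContinuousMajorArcBi-Parameter} at each scale, and interpolate against $|A_kf|\lesssim\mathcal{M}f$. The gap is the step you flag yourself, the frequency localization in the $L^2$ bound, and the mechanism you propose for it is not the right one.

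As stated, your tail bound $2^{-\delta|l-(m+k)|}$, derived from mean zero and curvature, carries no factor $2^{-ck}$. So even if it held, the pieces with $|l-(m+k)|$ small would destroy the decay in $k$. Worse, it fails once the modulation is present:
\begin{itemize}
\item \emph{Low frequencies.} The mean zero of $\psi_j$ is lost. For frozen $\tau$, the multiplier of $A_{k,m}$ tends, as $\xi\to0$, to $\int e^{2\pi i(\tau_1y_1^3+\tau_2y_2^3)}K_{m+k}(y)\,dy$, which is in general nonzero. So there is no decay as $l\to+\infty$. The only gain for these pieces is a factor $\lambda^{-c}$, $\lambda=2^{3k}$, produced by the modulation itself.
\item \emph{Oscillation-scale frequencies.} For frequencies with $2^{m+k}|\xi_{\le2}|\sim2^{2(m+k)}|\xi_3|\sim\lambda$, the phase $\tau_1y_1^3+\tau_2y_2^3-\xi\cdot\Gamma(y)$ can have stationary points on the support of $K_{m+k}$. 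Curvature then gives nothing, and the only gain is the $\lambda^{-c}$ of Lemma~\ref{LemmaContinuousMajorArcBi-Parameter}.
\item \emph{Higher frequencies.} Curvature helps only for frozen $\tau$, and transferring that to $x$-dependent $\tau_x$ is precisely the difficulty.
\end{itemize}
The essential contribution therefore sits in what you call the tails, near the oscillation scale rather than the natural scale $2^{-(m+k)}$. Your proposal has no mechanism for summing it over $l$ and $m$.

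What is missing is a frequency decomposition measured against the oscillation levels, with the sum closed by orthogonality rather than by decay. Write $\lambda_1\ge\lambda_2$ for the rescaled sizes of $|\tau_{1,x}|\ge|\tau_{2,x}|$. The paper decomposes at scale $j$ by $\psi_{-j+a}(\xi_{\le2})\psi_{-2j+a'}(\xi_3)$ and splits as follows.
\begin{itemize}
\item The low block $\max\{a,a'\}\le\log_2\lambda_2-C$ and the off-diagonal high block ($\max\{a,a'\}\ge\log_2\lambda_1+C$, $|a-a'|\ge C$) are bounded \emph{pointwise} by $\lambda_1^{-c}M^{\text{str}}f$. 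This uses non-stationary phase in $y$: the modulation, respectively the frequency, dominates the gradient. For the low block it is supplemented by van der Corput in $y_1$, since the third $y_1$-derivative of the phase is $6\tau_1$. These pieces then sum trivially over the disjoint $E_m$.
\item The high diagonal block $|a-a'|<C$ and the intermediate band $\log_2\lambda_2-C<\max\{a,a'\}<\log_2\lambda_1+C$ each receive Lemma~\ref{LemmaContinuousMajorArcBi-Parameter} as a whole at each scale. The sum over scales closes because the triples $(j,a,a')$ with $|a-a'|<C$ have boundedly overlapping Fourier supports, while the intermediate band overlaps only $O(\log\lambda_1)$ times.
\end{itemize}

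This scheme requires $\lambda_2\gtrsim1$. Measuring oscillation only through $\max\{|\tau_{1,x}|,|\tau_{2,x}|\}$ hides a degeneracy: for $|\tau_{2,x}|\ll|\tau_{1,x}|$ the gradient $(3\tau_1y_1^2,3\tau_2y_2^2)$ has size only $|\tau_2|$ near $y_1=0$, so the low block is not non-stationary. The paper therefore separates the range $|\tau_{2,x}|\le2^{-3j}<|\tau_{1,x}|$. There it replaces $e^{2\pi i\tau_{2,x}y_2^3}$ by $1$ at cost $|\tau_{2,x}|2^{3j}M^{\Gamma}f$ and invokes Pierce--Yung's maximal Lemma~\ref{LemmaMaximalMajorArc} with $P_3=y_1^3$. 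Your proposal needs a substitute for this step as well.
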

\noindent Note that the general results in \cite{PYPolynomialCarlesonRadon, AMPYPolynomialCarlesonRadon} do not allow taking a bi-parameter maximal modulation with polynomials of the same degree.

\subsection{The finite field analogue and pattern finding}
Our formulation of (non-maximal) major arc lemmas is inspired by pattern finding problems. In \cite{PelusePolynomialSzemerediFiniteFields, PPNonlinearRoth}, the authors introduced the Gowers norm control and degree-lowering techniques to find polynomial patterns in finite fields or the integers. Their techniques were further developed in many later works including \cite{PelusePolynomialSzemeredi, PPSSquareCorner, KucaConcatenationFiniteFields, KKLConcatenation}. As a milestone, \cite{KMPWWMultilinearPolynomialErgodic} established a very general inverse theorem for higher-dimensional polynomial corner patterns with distinct degrees and use it (together with other deep tools in discrete harmonic analysis) to solve an important case of the Furstenberg--Bergelson--Leibman conjecture \cite[Conjecture 1.20]{BMSWMulti-ParameterPolynomialErgodic}.

A crucial step in the degree-lowering argument is to establish certain major arc lemma similar in spirit to Lemma \ref{LemmaContinuousMajorArcBi-Parameter}. Take the pattern $(x_1, x_2), (x_1-y^2, x_2), (x_1, x_2-y^3)$ in $\mathbb{F}_q^2$ as an example. Then the corresponding major arc lemma roughly says
\begin{equation} \label{EqMajorArcStandard}
\mathbb{E}_{x_1\in \mathbb{F}_q}| \mathbb{E}_{y\in \mathbb{F}_q}g(x_1-y^2)e_{q}(\tau_{x_1}\cdot y^3)|    
\end{equation}
is small unless $\tau_{x_1}$ lies in the major arc (which is $\{0\}$ in finite fields) for a positive portion of $x_1$. Now, if we want to find the pattern $\big{(}x_1, x_2, x_3, x_4\big{)}, \big{(}x_{\leq 3}-(y_1, y_2, y_1^2+y_2^2), x_4 \big{)}, \big{(} x_{\leq 3}, x_4-(y_1^3+y_2^3) \big{)}$ in $\mathbb{F}_q^4$, we are led to consider
\begin{equation} \label{EqMajorArcRadon}
\mathbb{E}_{x_{\leq 3}\in \mathbb{F}_q^3} \big| \mathbb{E}_{y\in \mathbb{F}_q^2}g \big{(}x_{\leq 3}-(y_1, y_2, y_1^2+y_2^2)\big{)}e_{q}\big{(}\tau_{x_{\leq 3}}\cdot (y_1^3+y_2^3)\big{)} \big|.
\end{equation}
One can see the similarity between \eqref{EqMajorArcRadon} and Lemma \ref{LemmaContinuousMajorArcBi-Parameter} after taking $\tau_{1, (\cdot)}=\tau_{2, (\cdot)}$ in the latter. For \eqref{EqMajorArcRadon}, the variables $x_1, x_2, x_3$ in the single function $g$ are averaged by polynomials with possibly different degrees. This feature makes \eqref{EqMajorArcRadon} lie outside the scope of the very general major arc lemma established in \cite{KucaPolynomialCornerFiniteFields, KucaConcatenationFiniteFields}, and it is unclear how to adapt the methods in \cite{PPSSquareCorner, KMPWWMultilinearPolynomialErgodic, KucaPolynomialCornerFiniteFields, KucaConcatenationFiniteFields} to this setting. Nevertheless, we give a unified $TT^*$ proof of this type of major arc lemmas in the finite field and the continuous settings. Combining this major arc lemma with the Gowers norm control and degree-lowing techniques, we then establish
\begin{theorem}  \label{TheoremFindingPatterns}
Let $q$ be a prime. Let $A$ be a subset of $\mathbb{F}_q^4$ with density $\alpha:=\frac{\#A}{q^4}$. Then there is an absolute constant $c_0>0$ such that
\begin{equation}  \label{EqFindingPatterns}
\begin{split}
\frac{1}{q^6}\# \big{\{}(x, y)\in \mathbb{F}_q^4\times \mathbb{F}_q^2: & \big{(}x_{\leq 3}, x_4\big{)}, \big{(}x_{\leq 3}-(y_1, y_2, y_1^2+y_2^2), x_4 \big{)}, \big{(} x_{\leq 3}, x_4-(y_1^3+y_2^3) \big{)}\in A \big{\}}   \\
&=\mathbb{E}_{x\in \mathbb{F}_q^4} 1_A(x)\big{(} \mathbb{E}_{x_{\leq 3}\in \mathbb{F}_q^3} 1_A(\cdot, x_4) \big{)} \big{(} \mathbb{E}_{x_4\in \mathbb{F}_q} 1_A(x_{\leq 3}, \cdot)  \big{)}+O(q^{-c_0}).
\end{split}
\end{equation}
Moreover, the right hand side is at least $\alpha^3+O(q^{-c_0})$.
\end{theorem}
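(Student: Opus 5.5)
We follow the Peluse--Prendiville degree-lowering scheme, with the finite field major arc lemma (the analogue of Lemma \ref{LemmaContinuousMajorArcBi-Parameter} for \eqref{EqMajorArcRadon}) as the key new input. Write the counting operator as
\begin{equation*}
\Lambda(f_0,f_1,f_2):=\mathbb{E}_{x\in\mathbb{F}_q^4}\mathbb{E}_{y\in\mathbb{F}_q^2} f_0(x)\, f_1\big(x_{\leq 3}-(y_1,y_2,y_1^2+y_2^2),x_4\big)\, f_2\big(x_{\leq 3},x_4-(y_1^3+y_2^3)\big).
\end{equation*}
Decompose $1_A = \Pi_1 + (1_A-\Pi_1)$, where $\Pi_1(x):=\mathbb{E}_{x_{\leq 3}}1_A(\cdot,x_4)$, and similarly for $f_2$ with $\Pi_2(x):=\mathbb{E}_{x_4}1_A(x_{\leq3},\cdot)$. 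For the main term $\Lambda(1_A,\Pi_1,\Pi_2)$, the shift by $y$ disappears, since $\Pi_1$ depends only on $x_4$ and $\Pi_2$ only on $x_{\leq3}$. It is therefore exactly the expectation on the right of \eqref{EqFindingPatterns}. It then suffices to show $|\Lambda(f_0,f_1,f_2)|\lesssim q^{-c_0}$ whenever $|f_i|\le 1$ and either $f_1$ has mean zero in $x_{\leq 3}$ for each fixed $x_4$, or $f_2$ has mean zero in $x_4$ for each fixed $x_{\le 3}$.

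\textbf{Step 1 (Gowers norm control).} By repeated Cauchy--Schwarz and van der Corput in $y$, we bound $|\Lambda|$ by a power of a Gowers-type box norm of $f_2$ in the $x_4$ direction, plus $O(q^{-c})$. Each differencing lowers the degree of $y_1^3+y_2^3$ in the fourth coordinate and eliminates $f_0$ and $f_1$ via changes of variables. Error terms come from Weil-type bounds for the quadratic and linear phases in $y$; this is where $q$ prime is used.

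\textbf{Step 2 (degree lowering).} Let $F(x):=\mathbb{E}_y f_0(x_{\leq3}+(y,|y|^2),x_4)f_1(\dots)$ be the dual function, so that $\Lambda=\langle F, f_2\rangle$ up to the shift. If $\Lambda$ is large, Step 1 applied with $f_2$ replaced by the dual function shows that the dual has large $U^s$ norm in $x_4$. The inverse theorem on $\mathbb{F}_q$ then yields phases $\tau_{x_{\le3}}$ such that $\mathbb{E}_{x_{\le 3}}|\mathbb{E}_y g(x_{\leq3}-(y,|y|^2))e_q(\tau_{x_{\le3}}(y_1^3+y_2^3))|$ is large. This quantity is exactly \eqref{EqMajorArcRadon}.

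\textbf{Step 3 (major arc lemma).} We prove the finite field version: \eqref{EqMajorArcRadon} is at most $q^{-c}$ unless $\tau=0$ on a proportion $\gtrsim q^{-c}$ of $x_{\le 3}$. We mimic the approach announced for Lemma \ref{LemmaContinuousMajorArcBi-Parameter}: Fourier transform in $x$, then $TT^*$. Writing $g$ via its Fourier transform, the operator $TT^*$ has a kernel given by a Gauss--Weil sum
\begin{equation*}
\mathbb{E}_y e_q\big(\xi\cdot(y,|y|^2)+(\tau-\tau')(y_1^3+y_2^3)\big).
\end{equation*}
When $\tau\neq\tau'$ this cubic sum is $O(q^{-1})$ by Weil. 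The diagonal $\tau=\tau'$ has small measure once $\tau$ takes many values; the case where $\tau$ concentrates on a single nonzero value is handled by the standard oscillatory bound. Summing over $\tau$ levels gives the power saving.

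Feeding the conclusion $\tau\equiv 0$ on many fibers back into Step 2 means the dual function correlates with functions of $x_{\le3}$ alone. Iterating lowers $s$ until we obtain $U^1$ control, which is exactly the mean-zero assumption on $f_2$; the case of $f_1$ is treated symmetrically. Finally, the lower bound $\ge \alpha^3$ follows from Cauchy--Schwarz/H\"older: write the main term as $\mathbb{E}_x 1_A\Pi_1\Pi_2$ and use the positivity argument for corner-type averages, namely $\mathbb{E}\,1_A\Pi_1\Pi_2\ge(\mathbb{E}1_A)^3$ via two applications of Cauchy--Schwarz.

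I expect Step 3 to be the main obstacle. The difficulty is handling arbitrary $x$-dependent $\tau$ inside the $TT^*$ computation, and getting the Weil bound uniformly in the linear and quadratic frequencies $\xi$.
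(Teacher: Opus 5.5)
Your reduction to the two mean-zero cases and the identification of the main term match the paper. Step 2 and the way you close the argument have genuine gaps.

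\textbf{Degree lowering and the final assembly.} For $s\ge 3$, no inverse theorem on $\mathbb{F}_q$ gives you a single linear phase $e(\tau_{x_{\le3}}x_4)$ correlating with the dual (the available ones are far too lossy for a power saving). So you never land on \eqref{EqMajorArcRadon} that way. The Peluse--Prendiville mechanism, which the paper runs, goes as follows:
\begin{itemize}
\item Write $\|F_2(x_{\le3},\cdot)\|_{U^s}^{2^s}=\mathbb{E}_h\|\Delta_hF_2(x_{\le3},\cdot)\|_{U^2}^4$ and take $U^2$-frequencies $\phi_{x_{\le3}}(h)$.
\item Use the dual--difference interchange (Lemma~\ref{LemmaFindingPatternsDual-DifferenceInterchange}) to move $\Delta_h$ inside the average.
\item Apply the major arc lemma to $\Delta_{h^{(0)}-h^{(1)}}f_1(\cdot,x_4)$ with modulation $\Phi_{x_{\le3}}(h^{(0)},h^{(1)})$. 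This shows $\phi_{x_{\le3}}$ agrees with a lower-rank function on a dense set.
\item Low-rank control (Lemma~\ref{LemmaFindingPatternsLowRankControl}) then gives $U^{s-1}$.
\end{itemize}
``Correlation with functions of $x_{\le3}$ alone'' is only the output of the last step, $s=2$. Also, the relevant dual is $F_2(x)=\mathbb{E}_yf_0(x_{\le3},x_4+P(y))f_1(x_{\le3}-\Gamma(y),x_4+P(y))$, which is shifted in $x_4$. Only this one turns into \eqref{EqMajorArcRadon} after $x_4\mapsto x_4-P(y)$ and pigeonholing; your $F$ is shifted in $x_{\le3}$.

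Second, $U^1$ control of $F_2$ cannot contradict a mean-zero $f_2$, because $F_2$ is built from $f_0,f_1$ only. What it gives, via equidistribution of $\Gamma(y)$ on $x_4$-fibres (Lemma~\ref{LemmaFindingPatternsShorterProgressions}(1)), is large fibre means of $f_1$, i.e.\ \eqref{EqFindingPatternsInverse1}. The $f_2$ case is not symmetric, because the pattern is not. The paper passes to the dual $F_1$ of $f_1$, uses $|\Lambda(f_0,\overline{F_1},f_2)|>\delta^2$, and applies \eqref{EqFindingPatternsInverse1} to $\overline{F_1}$. It then uses equidistribution of $P(y)$ on $x_{\le3}$-fibres (Lemma~\ref{LemmaFindingPatternsShorterProgressions}(2)).

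\textbf{Step 3.} The displayed kernel is not the $TT^*$ kernel. The product $m(\xi,\tau_x)\overline{m(\xi,\tau_z)}$ is a double sum in $(y,y')$. Summing over $\xi$ gives $q^3\,\mathbb{E}_{y,y'}1_{\Gamma(y)-\Gamma(y')=x-z}\,e(\tau_xP(y)-\tau_zP(y'))$, a one-variable sum along a line. Its $t^3$-coefficient carries $\tau_x-\tau_z$, while its $t^2$-coefficient contains a term proportional to $\tau_z$ itself; that is how the paper handles $\tau_x=\tau_z$. Your case split (diagonal small when $\tau$ takes many values, or $\tau$ constant) does not cover, e.g., $\tau$ taking two values on two halves of $\mathbb{F}_q^3$.

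Your last sentence can, however, be made into a complete and shorter proof here. Let $A_af(x)=\mathbb{E}_yf(x-\Gamma(y))e(aP(y))$. Its multiplier factors into two one-variable cubic sums, so it is at most $4q^{-1}$ for $a\ne0$ by Weil. Since $Af(x)=A_{\tau_x}f(x)$, you get $\|Af\|_2^2\le\sum_{a\ne0}\|A_af\|_2^2\le16q^{-1}\|f\|_2^2$. Restricting to $X$ and using $A_0f=\hat f(0)+O(q^{-1}\|f\|_2)$ in $L^2$ gives Lemma~\ref{LemmaFiniteFieldsMajorArc2} with $\#(X\cap\{\tau=0\})\gtrsim\delta^2q^3$. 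This exploits that $\tau$ takes at most $q$ values; the paper's line-restricted $TT^*$ is what survives in the continuous bi-parameter setting.

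\textbf{Step 1.} This step is only asserted. The paper obtains it by freezing $(x_2,y_2)$ and citing Kuca's Proposition 4.1 for the three-dimensional pattern with shifts $(y,y^2)$ and $y^3$.
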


\paragraph{Outline of the paper.}
\begin{itemize}
\item In Section \ref{SectionMajorArcLemma}, we prove our major arc lemmas in the finite field and the continuous settings. We start with the finite field case and eventually establish the continuous case Lemma \ref{LemmaContinuousMajorArcBi-Parameter}.
\item In Section \ref{SectionFindingPatterns}, we prove Theorem \ref{TheoremFindingPatterns} by combining the Gowers norm control result in \cite{KucaPolynomialCornerFiniteFields}, the degree-lowing technique in \cite{PPSSquareCorner}, and our major arc lemma in finite fields.
\item In Section \ref{SectionPY}, we give a short proof of Theorem \ref{TheoremPY} using the one-parameter version of Lemma \ref{LemmaContinuousMajorArcBi-Parameter} (which is non-maximal unlike Lemma \ref{LemmaMaximalMajorArc}).
\item In Section \ref{SectionPYBi-Parameter}, we prove Theorem \ref{TheoremPYBi-Parameter} by further developing the argument in Section \ref{SectionPY}.
\item In Section \ref{SectionFinalRemarks}, we give several remarks including the limitation of our argument.
\end{itemize}

\paragraph{Notations and conventions.}
\begin{itemize}
\item We use $x_{\leq k}$ for the first $k$ coordinates in $x=(x_1, x_2, \cdots, x_d)$.
\item Define
\begin{equation}  \label{EqDefinitionOfGammaAndP}
\Gamma(y):=(y_1, y_2, y_1^2+y_2^2),\ \ \ P(y):=y_1^3+y_2^3.
\end{equation}
\item We use $0<c\ll c_1\ll 1, 0<c_2, c_3<1$ for small absolute constants and $1\ll C_0\ll C_1<\infty$ for large absolute constants. Their precise values are not important in this paper.
\item For two positive quantity $a, b$, the notation $a\sim b$ means $\frac{1}{2}b<a\leq b$.
\item In $\mathbb{F}_q^d$, let $\mathbb{E}_{x\in X}$ denote the average $\frac{1}{q^d}\sum_{x\in X}$. Note that we use the normalized factor $\frac{1}{q^d}$ rather than $\frac{1}{|X|}$ for simplicity. ($|X|$ is comparable with $q^d$ most of the time.) Moreover, we use probability measure on the physical side and counting measure on the frequency side. Hence $\|f(x)\|_{L_x^p}:=(\mathbb{E}_{x\in \mathbb{F}_q^d} |f(x)|^p )^{\frac{1}{p}}$ and $\|\hat{f}(\xi)\|_{L_{\xi}^p}:=(\sum_{\xi\in \mathbb{F}_q^d} |\hat{f}(\xi)|^p )^{\frac{1}{p}}$.
\item In $\mathbb{F}_q^d$, let $e_q(\cdot)$ denote $e^{2\pi i\frac{\cdot}{q}}$. We may omit the prime $q$ when it is clear from the context.
\end{itemize}

\paragraph{AI usage.} No AI was used in (any aspect of) this work.

\paragraph{Acknowledgement.} The author would like to thank Jonathan Hickman for some valuable discussions. The author is also grateful to Lillian Pierce for helpful comments about this manuscript.

\section{Major Arc Lemmas Along the Paraboloid} \label{SectionMajorArcLemma}
We shall establish several versions of Lemma \ref{LemmaContinuousMajorArcBi-Parameter} by combining the $TT^*$ method, Schur's test, and exponential sum/oscillatory integral estimates.

\subsection{A one-parameter major arc lemma in the finite field setting}
In the finite field setting, we use the Fourier transform and its inverse defined as follows:
\begin{equation*}
\hat{f}(\xi):=\mathbb{E}_{x\in \mathbb{F}_q^n} e(-\xi\cdot x) f(x),\ \ \ f(x)=\sum_{\xi\in \mathbb{F}_q^n} e(x\cdot \xi) \hat{f}(\xi),
\end{equation*}
and Plancherel reads as $\|f\|_{L_x^2}=\|\hat{f}\|_{L_{\xi}^2}$.

We will need the following exponential sum estimate. It is sufficient to have a positive gain beyond the trivial bound, so one may instead use Weyl differencing to get such an estimate.
\begin{lemma}[Mordell {\cite[Chapter 4]{MontgomeryTenLectures}}] \label{LemmaExponentialSumFiniteFields}
Let $q$ be a prime. Let $Q\in \mathbb{Z}[X]$ be a polynomial with degree $0<d<q$. Suppose $q$ does not divide the leading coefficient of $Q$. Then
\begin{equation*}
\big{|}\mathbb{E}_{t\in [q]} e\big{(} Q(t) \big{)} \big{|}\lesssim_d q^{-\frac{1}{d}}.
\end{equation*}
\end{lemma}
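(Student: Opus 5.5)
The plan is Mordell's moment method: bound a high moment of complete sums over \emph{all} polynomials of degree at most $d$, and then observe that $|\mathbb{E}_t e(Q(t))|$ is unchanged along a large orbit of polynomials. Write $S(b):=\sum_{t\in\mathbb{F}_q} e\big(b_0+b_1t+\cdots+b_dt^d\big)$ for $b=(b_0,\dots,b_d)\in\mathbb{F}_q^{d+1}$. Reducing $Q$ mod $q$ gives a coefficient vector $a$ with $a_d\neq 0$, and the claim is $|S(a)|\lesssim_d q^{1-1/d}$.

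\textbf{Step 1 (moment bound).} Expand $|S(b)|^{2d}$ as a sum over $t_1,\dots,t_d,s_1,\dots,s_d\in\mathbb{F}_q$, and then sum over all $b\in\mathbb{F}_q^{d+1}$. Orthogonality in each $b_j$ gives
\begin{equation*}
\sum_{b\in\mathbb{F}_q^{d+1}}|S(b)|^{2d}=q^{d+1}N,
\end{equation*}
where $N$ is the number of solutions of $\sum_i t_i^j=\sum_i s_i^j$ for $1\le j\le d$. The condition for $j=0$ reads $d=d$ and imposes nothing. Since $d<q$, the integers $1,\dots,d$ are invertible in $\mathbb{F}_q$. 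Newton's identities therefore show that the power sums $p_1,\dots,p_d$ determine the elementary symmetric functions $e_1,\dots,e_d$. Hence $\prod_i(X-t_i)=\prod_i(X-s_i)$, so $(s_i)$ is a permutation of $(t_i)$. This gives $N\le d!\,q^d$, and so $\sum_b|S(b)|^{2d}\le d!\,q^{2d+1}$.

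\textbf{Step 2 (invariant orbit).} For $\mu\in\mathbb{F}_q^*$ and $\nu,c\in\mathbb{F}_q$, the polynomial $Q(\mu t+\nu)+c$ has the same sum up to a unimodular factor $e(c)$, because $t\mapsto\mu t+\nu$ is a bijection of $\mathbb{F}_q$. Its coefficients satisfy
\begin{equation*}
b_d=a_d\mu^d,\qquad b_{d-1}=\mu^{d-1}(d\,a_d\nu+a_{d-1}),\qquad b_0=Q(\nu)+c.
\end{equation*}
Given $b_d$, there are at most $d$ choices of $\mu$. Given $\mu$ and $b_{d-1}$, the value $\nu$ is determined, since $d\,a_d\neq0$; note that $a_d\ne0$ and $d<q$ are used here. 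Given $\nu$ and $b_0$, the value $c$ is determined. So the map $(\mu,\nu,c)\mapsto b$ is at most $d$-to-one, and there are at least $q^2(q-1)/d$ distinct vectors $b$ with $|S(b)|=|S(a)|$.

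\textbf{Step 3 (conclusion).} Combining the two steps gives
\begin{equation*}
\frac{q^2(q-1)}{d}\,|S(a)|^{2d}\le d!\,q^{2d+1},
\end{equation*}
hence $|S(a)|^{2d}\lesssim_d q^{2d-2}$. Dividing by $q$ gives $|\mathbb{E}_{t}e(Q(t))|\lesssim_d q^{-1/d}$.

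The delicate point is the exact bookkeeping of the exponent. The constant term $b_0$ must be included in the moment, since it contributes a factor $q$ to the orbit size but no extra solutions. Without it the argument only yields $q^{-1/(2d)}$. The Newton-identity step is where the hypothesis $d<q$ enters essentially.
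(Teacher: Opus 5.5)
Your argument is Mordell's classical moment method, which is what the paper's citation to Montgomery points to. The paper itself gives no proof, and remarks that any power saving (e.g.\ from Weyl differencing) would suffice for its purposes. For $d\ge 2$ your proof is correct: the Newton-identity count, the coefficient formulas, the $d$-to-one claim and the final arithmetic all check out.

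There are two small corrections.

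\textbf{The case $d=1$.} Step 2 fails here, because $b_{d-1}$ and $b_0$ are then the same coefficient. So $\nu$ and $c$ cannot both be recovered from $b$, and the map $(\mu,\nu,c)\mapsto b$ is $q$-to-one rather than at most $d$-to-one. Dispatch this case separately: $\sum_{t}e(a_1t+a_0)=0$ whenever $a_1\not\equiv 0 \pmod q$.

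\textbf{The closing remark.} It is inaccurate. Dropping $b_0$ divides the moment by $q$ (it becomes $q^dN\le d!\,q^{2d}$). It also divides the orbit size by $q$ (it becomes at least $q(q-1)/d$). So the bound $|S(a)|^{2d}\lesssim_d q^{2d-2}$ is unchanged, and the constant term is neutral. The parameter that is genuinely essential is the translation $\nu$: using only the dilations $\mu$ is what would leave you with $q^{-1/(2d)}$.
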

Below is the simplest version of our major arc lemmas.
\begin{lemma}  \label{LemmaFiniteFieldsMajorArc1}
Let $q$ be a prime. Let $\tau_{(\cdot)}: \mathbb{F}_q^3\rightarrow \mathbb{F}_q$ be any function. Suppose that $\tau_x\neq 0, \forall x\in \mathbb{F}_q^3$. Then for
\begin{equation}  \label{EqFiniteFieldsAveragingOperator}
Af(x):=\mathbb{E}_{y\in \mathbb{F}_q^2} f \big( x-\Gamma(y) \big) e\big( \tau_x\cdot P(y) \big)
\end{equation}
we have
\begin{equation*}
\|Af\|_{2}\lesssim q^{-c} \|f\|_2.
\end{equation*}
\end{lemma}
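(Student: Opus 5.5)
The plan is to use $TT^*$ with the operator $AA^*$, whose kernel can be evaluated explicitly by passing through the Fourier transform, and then to apply Schur's test together with Lemma \ref{LemmaExponentialSumFiniteFields}. We may assume $q$ is larger than a fixed absolute constant (in particular $q\neq 2,3$), since otherwise the bound is trivial. By Fourier inversion,
\begin{equation*}
Af(x)=\sum_{\xi\in\mathbb{F}_q^3} e(x\cdot\xi)\,\hat f(\xi)\, m(\xi,\tau_x),\qquad m(\xi,\tau):=\mathbb{E}_{y\in\mathbb{F}_q^2} e\big(-\xi\cdot\Gamma(y)+\tau P(y)\big).
\end{equation*}
Since $\tau_x$ depends on $x$, we work with $AA^*$ rather than $A^*A$, because then the sum over $\xi$ can be carried out. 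Writing $AA^*g(x)=\mathbb{E}_{x'}k(x,x')g(x')$ and expanding $m(\xi,\tau_x)\overline{m(\xi,\tau_{x'})}$ as an average over $y,y'$, the $\xi$-sum produces $q^3\,1[x-x'=\Gamma(y)-\Gamma(y')]$. Hence
\begin{equation*}
k(x,x')=q^3\,\mathbb{E}_{y,y'\in\mathbb{F}_q^2} 1\big[x-x'=\Gamma(y)-\Gamma(y')\big]\, e\big(\tau_xP(y)-\tau_{x'}P(y')\big).
\end{equation*}
By Schur's test it suffices to show $\sup_x\mathbb{E}_{x'}|k(x,x')|\lesssim q^{-c'}$, and then $\|A\|\lesssim q^{-c'/2}$.

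Fix $x$ and set $h=x-x'$, $u=h_{\le 2}$. The constraint forces $y=y'+u$ and $2y'\cdot u+|u|^2=h_3$.
\begin{itemize}
\item If $u=0$, the set is empty unless $h=0$. That case is a single point of density $q^{-3}$ with $|k|\le q$, contributing $O(q^{-2})$.
\item If $u\neq 0$, the solutions $y'$ form a line $y_0+tv$, $t\in\mathbb{F}_q$, with $v=(-u_2,u_1)$. The normalizations then give
\begin{equation*}
k(x,x')=\mathbb{E}_{t\in\mathbb{F}_q}e(Q(t)),\qquad Q(t)=\tau_xP(y_0+u+tv)-\tau_{x'}P(y_0+tv).
\end{equation*}
In particular $|k|\le 1$ always when $u\neq0$.
\end{itemize}
Thus the whole matter is to show that $Q$ is a nonconstant polynomial for all $h$ outside a set of density $O(q^{-1})$. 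Lemma \ref{LemmaExponentialSumFiniteFields} then gives $|k|\lesssim q^{-1/3}$ on the good set, so $\mathbb{E}_{x'}|k|\lesssim q^{-1/3}+q^{-1}$.

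The main step is this nondegeneracy analysis; the point is that $\tau_{x'}$ is arbitrary, possibly equal to $\tau_x$ for every $x'$. The $t^3$ coefficient of $Q$ is $(\tau_x-\tau_{x'})P(v)$.
\begin{itemize}
\item If $\tau_{x'}\neq\tau_x$, this coefficient vanishes only when $v_1^3+v_2^3=0$. This is equivalent to $u_1^3=u_2^3$, so $u$ lies on at most three lines through the origin, a set of density $O(q^{-1})$.
\item If $\tau_{x'}=\tau_x$, the cubic term cancels. The $t^2$ coefficient is $3\tau_x\big(u_1v_1^2+u_2v_2^2\big)=3\tau_x\,u_1u_2(u_1+u_2)$, which is nonzero because $\tau_x\neq0$ (this is exactly where the hypothesis is used), $q\neq3$, and $u$ avoids the three lines $u_1=0$, $u_2=0$, $u_1+u_2=0$.
\end{itemize}
In either case the bad $h$ have density $O(q^{-1})$, uniformly in $x$ and in the function $\tau$. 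The proof therefore closes with $c=1/6$, say. The only thing to double check is the coefficient computation along the line parametrization; in particular, the choice of $y_0$ does not affect the leading coefficients used above.
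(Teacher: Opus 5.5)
Your proposal is correct and follows the paper's proof essentially step for step. Both arguments compute the $TT^*$ kernel $q^3\,\mathbb{E}_{y,y'}1[\Gamma(y)-\Gamma(y')=h]\,e(\cdots)$ via the Fourier side and apply Schur's test (using the Hermitian symmetry of the kernel). Both split into $h_{\le 2}=0$ versus $h_{\le 2}\neq 0$, parametrize the solution line, and remove the same $O(1)$ lines of directions on which the cubic coefficient $(\tau_x-\tau_{x'})P(v)$ or the $\tau$-independent quadratic factor $u_1u_2(u_1+u_2)$ could vanish, before invoking Mordell's bound. Your parametrization along $y'$ instead of $y$ only swaps which of $\tau_x,\tau_{x'}$ multiplies that quadratic term, which is harmless since the two are equal in the case where that term is used.
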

\begin{remark}
Recall the notations defined in \eqref{EqDefinitionOfGammaAndP}.
\end{remark}
\begin{proof}
WLOG assume $q>3$. We write $Af$ into a kernel representation on the Fourier side:
\begin{equation*}
Af(x)=\mathbb{E}_{y\in \mathbb{F}_q^2} \sum_{\xi\in \mathbb{F}_q^3} e \big(\xi\cdot (x-\Gamma(y)) \big)\hat{f}(\xi)e\big( \tau_x\cdot P(y) \big)=\sum_{\xi\in \mathbb{F}_q^3} K(x, \xi)\hat{f}(\xi),
\end{equation*}
where $K(x, \xi):=e(\xi\cdot x)\mathbb{E}_{y\in \mathbb{F}_q^2} e \big(-\xi\cdot\Gamma(y)+\tau_{x}P(y) \big)$. By Plancherel, it suffices to show the following operator has norm $O(q^{-c})$.
\begin{equation*}
f\mapsto \sum_{\xi\in \mathbb{F}_q^3} K(x, \xi)f(\xi)=:\mathfrak{A}f.
\end{equation*}
Then we compute the kernel of $\mathfrak{A}\mathfrak{A}^*$ as
\begin{equation}  \label{EqFiniteFieldsSummand1}
\begin{split}
\widetilde{K}(x, z):&=\sum_{\xi\in \mathbb{F}_q^3} K(x, \xi)\overline{K(z, \xi)} \\
& =\sum_{\xi\in \mathbb{F}_q^3} e\big( (x-z)\cdot\xi \big) \mathbb{E}_{y, y' \in \mathbb{F}_q^2} e \big(-\xi\cdot\Gamma(y)+\tau_{x}P(y) \big)e \big(-\xi\cdot\Gamma(y')+\tau_{x}P(y') \big).
\end{split}
\end{equation}
By Schur's test, it suffices to show
\begin{align}
& \mathbb{E}_{z\in \mathbb{F}_q^3} |\widetilde{K}(x, z)|\lesssim q^{-2c},\ \forall x\in \mathbb{F}_q^3, \label{EqFiniteFieldsSchurTest1}   \\
& \mathbb{E}_{x\in \mathbb{F}_q^3} |\widetilde{K}(x, z)|\lesssim q^{-2c},\ \forall z\in \mathbb{F}_q^3.  \label{EqFiniteFieldsSchurTest2}
\end{align}
Since $\widetilde{K}(x, z)=\overline{\widetilde{K}(z, x)}$, we only need to show \eqref{EqFiniteFieldsSchurTest1}. Fix $x$ from now on. Change the variable $z\mapsto x-z$. We exploit the oscillation by writing \eqref{EqFiniteFieldsSummand1} into:
\begin{equation}  \label{EqFiniteFieldsSummand2}
\begin{split}
\widetilde{K}(x, x-z) &=\mathbb{E}_{y, y'\in \mathbb{F}_q^2}\sum_{\xi\in \mathbb{F}_q^3}  e(z\cdot\xi) e\Big(-\xi\cdot \big( \Gamma(y)-\Gamma(y') \big) \Big) e \big(\tau_{x}P(y)-\tau_{x-z}P(y') \big) \\
&=\mathbb{E}_{y, y'\in \mathbb{F}_q^2} q^31_{\Gamma(y)-\Gamma(y')=z} e \big(\tau_{x}P(y)-\tau_{x-z}P(y') \big).
\end{split}
\end{equation}
Note that $\Gamma(y)-\Gamma(y')=z$ is equivalent to
\begin{equation}  \label{EqFiniteFieldsRangeOfy}
y-y'=z_{\leq 2},\ \ \ y\cdot z_{\leq 2}=2^{-1}(z_3+z_1^2+z_2^2).
\end{equation}
Here $2^{-1}$ is the inverse of $2$ modulo $q$. When $z_{\leq 2}=0$, we have
\begin{equation}  \label{EqFiniteFieldsRDegenerateCase}
\text{RHS of \eqref{EqFiniteFieldsSummand2}}=\begin{cases}
0,\ z_3\neq 0, \\
\frac{1}{q^4} \sum_{y=y'} q^3=q,\ z_3=0
\end{cases}
\end{equation}
When $z_{\leq 2}\neq 0$, \eqref{EqFiniteFieldsRangeOfy} is equivalent to $y-y'=z_{\leq 2}$ and $y$ belongs to the line $\{t\in \mathbb{F}_q: tz_{\leq 2}^{\perp}+b_z\}$. Here $z_{\leq 2}^{\perp}\neq 0, b_z$ are arbitrarily fixed elements in $\mathbb{F}_q^2$ such that $z_{\leq 2}^{\perp}\cdot z_{\leq 2}=0, b_z\cdot z_{\leq 2}=2^{-1}(z_3+z_1^2+z_2^2)$ respectively. Thus
\begin{equation}  \label{EqFiniteFieldsSummand3}
\text{RHS of \eqref{EqFiniteFieldsSummand2}}=\mathbb{E}_{t\in \mathbb{F}_q} e \big(\tau_{x}P(tz_{\leq 2}^{\perp}+b_z)-\tau_{x-z}P(tz_{\leq 2}^{\perp}+b_z-z_{\leq 2}) \big)=:\mathbb{E}_{t\in \mathbb{F}_q} e \big( Q_z(t) \big).
\end{equation}
In particular, \eqref{EqFiniteFieldsSummand3} implies $|\widetilde{K}(x, x-z)|\leq 1$ when $z_{\leq 2}\neq 0$. To prove \eqref{EqFiniteFieldsSchurTest1}, we shall select an exceptional set $\Upsilon\subset \mathbb{F}_q^3\backslash \{z: z_{\leq 2}=0\}$ such that $\#\Upsilon\lesssim q^{-2c}\cdot q^3$ and $|\widetilde{K}(x, x-z)|\lesssim q^{-2c}$ for $z\in \Upsilon_{\neq 0}^c:=\mathbb{F}_q^3\backslash (\Upsilon\cup \{z: z_{\leq 2}=0\})$. With this $\Upsilon$, we can use its properties and \eqref{EqFiniteFieldsRDegenerateCase} to estimate
\begin{equation*}
\begin{split}
\mathbb{E}_{z\in \mathbb{F}_q^3} |\widetilde{K}(x, x-z)| & =\frac{1}{q^3} \sum_{z: z_{\leq 2}=0} |\widetilde{K}(x, x-z)|+\frac{1}{q^3}\sum_{z\in \Upsilon}|\widetilde{K}(x, x-z)|+\frac{1}{q^3}\sum_{z\in \Upsilon_{\neq 0}^c} |\widetilde{K}(x, x-z)| \\
& \lesssim \frac{1}{q^3}\cdot q+\frac{1}{q^3}\#\Upsilon+\frac{1}{q^3}\cdot q^3\cdot q^{-2c}\lesssim q^{-2c},
\end{split}
\end{equation*}
which gives \eqref{EqFiniteFieldsSchurTest1}.

We select $\Upsilon$ to ensure that the polynomial $Q_z(t)$ in RHS of \eqref{EqFiniteFieldsSummand3} has degree strictly larger than zero for $z\in \Upsilon_{\neq 0}^c$. The coefficients of $t^3$ and $t^2$ are
\begin{align}
& (\tau_{x}-\tau_{x-z})\big( (z_{\leq 2}^{\perp})_1^3+(z_{\leq 2}^{\perp})_1^3 \big), \label{EqFiniteFieldsCoefficient1} \\
(\tau_{x}-\tau_{x-z})\big( (z_{\leq 2}^{\perp})_1^2(b_z)_1 & +(z_{\leq 2}^{\perp})_2^2(b_z)_2 \big)+\tau_{x-z}\big( (z_{\leq 2}^{\perp})_1^2z_1+(z_{\leq 2}^{\perp})_2^2z_2 \big), \label{EqFiniteFieldsCoefficient2}
\end{align}
respectively. Let $\Upsilon:=\{z: z_{\leq 2}\neq 0, (z_{\leq 2}^{\perp})_1^3+(z_{\leq 2}^{\perp})_1^3=0 \text{ or } (z_{\leq 2}^{\perp})_1^2z_1+(z_{\leq 2}^{\perp})_2^2z_2=0 \}$. Recall $\tau_x\neq 0, \forall x\in \mathbb{F}_q^3$. We see either $t^3$ or $t^2$ has a non-zero coefficient in $Q_z(t)$ when $z\in \Upsilon_{\neq 0}^c$. In this case, the desired estimate $|\widetilde{K}(x, x-z)|\lesssim q^{-2c}$ follows from Lemma \ref{LemmaExponentialSumFiniteFields}. It remains to check $\#\Upsilon\lesssim q^{-2c}\cdot q^3$. Let $z\in \Upsilon$. WLOG assume $z_1\neq 0$. Then we can write $z_2=rz_1$, and $z_{\leq 2}^{\perp}\neq 0$ is parallel with $(-r, 1)$. Plug these into the definition of $\Upsilon$. We see that $r$ is in the zero sets of $(-r)^3+1$ or $(-r)^2+r$, which means $\Upsilon$ is contained in the union of $O(1)$ many two-dimensional subspaces of $\mathbb{F}_q^3$. This implies $\#\Upsilon\lesssim q^{-1}\cdot q^3$ and completes the proof.
\end{proof}

In the proof of Theorem \ref{TheoremFindingPatterns}, we actually need an inverse version of Lemma \ref{LemmaFiniteFieldsMajorArc1}. It easily follows by combining the above argument and the following standard popularity principle.
\begin{lemma}  \label{LemmaPopularityPrinciple}
Let $0\leq Y\leq 1$ be a random variable in an arbitrary probability space. Let $\lambda\in (0,1)$. Suppose $\mathbb{E}Y>\lambda$. Then $P(Y>\frac{\lambda}{2})>\frac{\lambda}{2}$.
\end{lemma}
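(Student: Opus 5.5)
The plan is to argue by contradiction, splitting the expectation according to whether $Y$ exceeds $\frac{\lambda}{2}$. Put $E:=\{Y>\frac{\lambda}{2}\}$ and suppose, contrary to the claim, that $P(E)\leq \frac{\lambda}{2}$. Since $0\leq Y\leq 1$, the two pieces of the expectation are easy to bound: on $E$ we use $Y\leq 1$, and on $E^c$ we use $Y\leq \frac{\lambda}{2}$. This gives
\begin{equation*}
\mathbb{E}Y=\mathbb{E}[Y1_{E}]+\mathbb{E}[Y1_{E^c}]\leq P(E)+\frac{\lambda}{2}P(E^c).
\end{equation*}

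The only point requiring a little care is the strictness of the inequality. From the bound above and the assumption $P(E)\leq\frac{\lambda}{2}$,
\begin{equation*}
\mathbb{E}Y\leq P(E)+\frac{\lambda}{2}\big(1-P(E)\big)=\frac{\lambda}{2}+\Big(1-\frac{\lambda}{2}\Big)P(E)\leq \frac{\lambda}{2}+\Big(1-\frac{\lambda}{2}\Big)\frac{\lambda}{2}=\lambda-\frac{\lambda^2}{4}<\lambda.
\end{equation*}
Here the coefficient $1-\frac{\lambda}{2}$ is positive because $\lambda<1$, so substituting the upper bound $P(E)\leq \frac{\lambda}{2}$ is legitimate. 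The last inequality is strict because $\lambda>0$.

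The conclusion $\mathbb{E}Y<\lambda$ contradicts the hypothesis $\mathbb{E}Y>\lambda$. Hence $P(E)>\frac{\lambda}{2}$, which is the statement. There is no substantial obstacle in this argument; the only thing to track is that both the strictness of the final inequality and the sign of $1-\frac{\lambda}{2}$ come from the standing assumption $\lambda\in(0,1)$.
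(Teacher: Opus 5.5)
Your proof is correct and follows the paper's own argument: assume $P(Y>\frac{\lambda}{2})\leq\frac{\lambda}{2}$, then split $\mathbb{E}Y$ on this event using $Y\leq 1$ and $Y\leq\frac{\lambda}{2}$ respectively. The paper bounds the second piece by $\frac{\lambda}{2}\cdot 1$ and gets only $\mathbb{E}Y\leq\lambda$, which already contradicts the strict hypothesis $\mathbb{E}Y>\lambda$, so your sharper bound $\lambda-\frac{\lambda^2}{4}$ and the extra attention to strictness are harmless but not needed.
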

\begin{proof}
Assume $P(Y>\frac{\lambda}{2})\leq \frac{\lambda}{2}$. Then
\begin{equation*}
\mathbb{E}Y=\mathbb{E}(Y: Y>\frac{\lambda}{2})+\mathbb{E}(Y: Y>\frac{\lambda}{2})\leq 1\cdot \frac{\lambda}{2}+ \frac{\lambda}{2}\cdot 1=\lambda,
\end{equation*}
which contradicts the condition.
\end{proof}

\begin{lemma}  \label{LemmaFiniteFieldsMajorArc2}
Let $q$ be a prime and $X$ be a subset of $\mathbb{F}_q^3$. Let $\tau_{(\cdot)}: X\rightarrow \mathbb{F}_q$ be any function. Let $\delta\in (0, 1)$ such that $q>C_0\delta^{-C_0}$. Suppose the following holds for some $f$ ($Af$ defined by \eqref{EqFiniteFieldsAveragingOperator}):
\begin{equation*}
(\mathbb{E}_{x\in X} |Af|^2)^{\frac{1}{2}}>\delta\|f\|_2.
\end{equation*}
Then there is $X_0\subset X$ with $\#X_0\gtrsim \delta^{O(1)}q^3$ such that $\tau_{x}=0, \forall x\in X_0$.
\end{lemma}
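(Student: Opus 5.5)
We argue by contraposition, rerunning the $TT^*$ argument of Lemma \ref{LemmaFiniteFieldsMajorArc1} with the operator restricted to $X$. Set $X_0:=\{x\in X:\tau_x=0\}$ and suppose $\#X_0\leq \epsilon q^3$, where $\epsilon=c'\delta^{C}$ is small. The aim is to show $(\mathbb{E}_{x\in X}|Af|^2)^{1/2}\leq \delta\|f\|_2$, a contradiction. Write $X_1:=X\setminus X_0$, on which $\tau_x\neq 0$. Then
\begin{equation*}
\mathbb{E}_{x\in X}|Af|^2=\mathbb{E}_{x\in X_1}|Af|^2+\mathbb{E}_{x\in X_0}|Af|^2 .
\end{equation*}

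\textbf{The set $X_1$.} Apply Lemma \ref{LemmaFiniteFieldsMajorArc1} to $1_{X_1}Af$. Extend $\tau$ to all of $\mathbb{F}_q^3$ by $\tau_x:=1$ off $X_1$; this extension is nonzero everywhere. Then $|1_{X_1}Af|\leq |A'f|$ pointwise, where $A'$ is the operator built from the extended $\tau$. Lemma \ref{LemmaFiniteFieldsMajorArc1} gives $\mathbb{E}_{x\in X_1}|Af|^2\lesssim q^{-2c}\|f\|_2^2$. Since $q>C_0\delta^{-C_0}$ with $C_0$ large compared with $1/c$, this is at most $\delta^2\|f\|_2^2/4$.

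\textbf{The set $X_0$.} This is the main obstacle. The trivial bound $|Af(x)|\leq \mathbb{E}_y|f(x-\Gamma(y))|$ combined with Cauchy--Schwarz only gives $\|Af\|_\infty\lesssim q\|f\|_2$, because the $L^2$ norm uses probability measure and $\Gamma$ parametrizes a surface of size $q^2$. That bound is far too lossy. Instead, treat $1_{X_0}A$ by the same kernel computation, where now $\tau\equiv 0$ on $X_0$. The operator $f\mapsto 1_{X_0}(x)\mathbb{E}_yf(x-\Gamma(y))$ has $TT^*$ kernel $1_{X_0}(x)1_{X_0}(z)\,q^3\,\mathbb{E}_{y,y'}1_{\Gamma(y)-\Gamma(y')=x-z}$, computed exactly as in \eqref{EqFiniteFieldsSummand2}.

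For $(x-z)_{\leq 2}\neq 0$, the counting in \eqref{EqFiniteFieldsRangeOfy} shows this kernel has size at most $1$. For $x-z$ with $(x-z)_{\leq 2}=0$ and $(x-z)_3=0$ it has size $q$, and otherwise it vanishes, by \eqref{EqFiniteFieldsRDegenerateCase}. Schur's test then gives
\begin{equation*}
\sup_x\mathbb{E}_{z}|\widetilde K(x,z)|\leq \frac{\#X_0}{q^3}+q^{-2}\leq \epsilon+q^{-2}.
\end{equation*}
Hence $\mathbb{E}_{x\in X_0}|Af|^2\leq(\epsilon+q^{-2})\|f\|_2^2\leq \delta^2\|f\|_2^2/4$ once we choose $\epsilon=\delta^2/8$.

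\textbf{Conclusion.} Combining the two pieces gives $\mathbb{E}_{x\in X}|Af|^2\leq \delta^2\|f\|_2^2/2$, contradicting the hypothesis. Therefore $\#X_0>\delta^2q^3/8$, and $\tau_x=0$ on $X_0$, as claimed.

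The popularity principle, Lemma \ref{LemmaPopularityPrinciple}, is not strictly needed in this route. One could alternatively use it to locate many $x$ with $|Af(x)|$ large and argue that those $x$ must be dominated by zeros of $\tau$. The Schur-test split above already yields the $\delta^{O(1)}q^3$ bound directly.
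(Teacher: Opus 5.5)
Your proof is correct, and it takes a genuinely different route from the paper's.

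The paper reruns the $TT^*$ argument for $1_XA$ in the contrapositive. Schur's test gives a single $x$ with $\mathbb{E}_z|1_X(x)\widetilde K(x,z)1_X(z)|>\delta^2$. After discarding the slice $z_{\leq 2}=0$ and the exceptional set $\Upsilon$, Lemma \ref{LemmaPopularityPrinciple} produces more than $\frac{\delta^2}{4}q^3$ shifts $z$ with $|\widetilde K(x,x-z)|>\frac{\delta^2}{4}$. Lemma \ref{LemmaExponentialSumFiniteFields} then forces both the $t^3$ and $t^2$ coefficients of $Q_z$ to vanish, i.e.\ $\tau_x=\tau_{x-z}$ and $\tau_{x-z}=0$, so one takes $X_0=x-Z$.

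You instead split $X$ along the zero set of $\tau$. On $X_1=\{\tau\neq 0\}$ you use Lemma \ref{LemmaFiniteFieldsMajorArc1} as a black box. This is legitimate because $Af(x)$ depends on $\tau$ only through $\tau_x$, so extending $\tau$ by $1$ off $X_1$ changes nothing on $X_1$. On $X_0=\{\tau=0\}$ you bound the unmodulated average restricted to a small set by Schur, getting $\|1_{X_0}A\|^2\leq \#X_0/q^3+q^{-2}$. Your kernel values are right: $1$ off the slice $(x-z)_{\leq 2}=0$, $q$ at $z=x$, and $0$ elsewhere on the slice. The same bound also follows from the fact that $f\mapsto\mathbb{E}_yf(x-\Gamma(y))$ is the projection onto constants plus a Fourier multiplier of norm at most $q^{-1}$, by Gauss sums.

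Your route is more modular. It needs no popularity principle and no second pass through the coefficients of $Q_z$. It identifies $X_0$ as the whole zero set of $\tau$ in $X$, with the explicit bound $\#X_0>\frac{\delta^2}{8}q^3$, the same $\delta^2$ dependence as the paper. In fact your $X_0$ estimate never uses $\tau=0$: since $|Af|\leq \mathbb{E}_y|f(\cdot-\Gamma(y))|$, any set of density $\epsilon$ contributes at most $\epsilon+q^{-2}$, whatever the modulation. The price is this extra small-set input for the averaging operator, which is trivial here.

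The paper's argument needs no such separate estimate. It stays inside the single kernel computation already done for Lemma \ref{LemmaFiniteFieldsMajorArc1} and extracts the major-arc information directly at one popular point, in the usual additive-combinatorics style.

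One harmless slip, in a remark you do not use: Cauchy--Schwarz gives $\|Af\|_\infty\leq q^{1/2}\|f\|_2$, not $q\|f\|_2$. Your conclusion that this bound is too lossy is unaffected.
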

\begin{proof}
We use the same notations as in the proof of Lemma \ref{LemmaFiniteFieldsMajorArc1}. Note that $(\mathbb{E}_{x\in X} |Af|^2)^{\frac{1}{2}}=\|1_{x\in X}Af\|_2$. Use Plancherel as before. We see the operator $1_{x\in X}A$ with kernel $1_{x\in X}K(x, \xi)$ has operator norm larger than $\delta$, which means $(1_{x\in X}A)(1_{x\in X}A)^*$ has operator norm larger than $\delta^2$. Apply Schur's test in its contrapositive. We get there exists an $x\in \mathbb{F}_q^3$ such that
\begin{equation*}
\mathbb{E}_{z\in \mathbb{F}_q^3} |1_{x\in X}\widetilde{K}(x, z)1_{z\in X}|>\delta^2.
\end{equation*}
Let $z\mapsto x-z$ and split the sum as before. We have
\begin{equation*}
\mathbb{E}_{z\in \mathbb{F}_q^3} |1_{x\in X}\widetilde{K}(x, x-z)1_{x-z\in X}|\leq O(q^{-2c})+\frac{1}{q^3}\sum_{z\in \Upsilon_{\neq 0}^c} |\widetilde{K}(x, x-z)1_{x-z\in X}|.
\end{equation*}
Thus
\begin{equation*}
\frac{1}{q^3}\sum_{z\in \Upsilon_{\neq 0}^c} |\widetilde{K}(x, x-z)1_{x-z\in X}|>\frac{\delta^2}{2}.
\end{equation*}
Then Lemma \ref{LemmaPopularityPrinciple} gives us a set $Z$ with $\#Z>\frac{\delta^2}{4}q^3$ such that for any $z\in Z$ we have $z\in \Upsilon_{\neq 0}^c$, $x-z\in X$, and 
\begin{equation*}
|\widetilde{K}(x, x-z)|>\frac{\delta^2}{4}.
\end{equation*}
For each such $z$, the two coefficients \eqref{EqFiniteFieldsCoefficient1}\eqref{EqFiniteFieldsCoefficient2} must be $0$ simultaneously; otherwise Lemma \ref{LemmaExponentialSumFiniteFields} gives a contradiction. Our selection of $\Upsilon$ then implies $\tau_{x}-\tau_{x-z}=0, \tau_{x-z}=0$. Hence $X_0:=x-Z$ satisfies the requirements.
\end{proof}

\subsection{A one-parameter major arc lemma in the continuous setting}
Then we establish the continuous analogue of Lemma \ref{LemmaFiniteFieldsMajorArc1}, which is weaker than Lemma \ref{LemmaContinuousMajorArcBi-Parameter}. We chose to prove it separately because its proof already contains the main ideas for the continuous setting, and we only need such a weaker version in the short proof of Theorem \ref{TheoremPY}.
\begin{lemma}  \label{LemmaContinuousMajorArc}
Let $\lambda\gtrsim 1$. Let $\tau_{(\cdot)}: \mathbb{R}^3\rightarrow \mathbb{R}$ be any measurable function such that $|\tau_{x}|>\lambda, \forall x\in \mathbb{R}^3$. Let
\begin{equation*}
Af(x):=\int_{\mathbb{R}^2} f \big( x-\Gamma(y) \big) e^{2\pi i \tau_x P(y)} \rho (y) dy,
\end{equation*}
where $\rho\in \mathcal{S}(\mathbb{R}^3)$ is supported in $\{|y|<5\}$ with $\|\rho\|_{C^1}\leq 1$. Then we have
\begin{equation*}
\|Af\|_2\lesssim \lambda^{-c}\|f\|_2, \ \ \ \forall f\in L^2(\mathbb{R}^3).
\end{equation*}
\end{lemma}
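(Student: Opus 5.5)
We mimic the proof of Lemma \ref{LemmaFiniteFieldsMajorArc1}, replacing exact exponential sums by oscillatory integral estimates (van der Corput) and the counting of $\Upsilon$ by a measure estimate.

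\textbf{Kernel and $TT^*$.} On the Fourier side write
\begin{equation*}
Af(x)=\int_{\mathbb{R}^3}K(x,\xi)\hat f(\xi)\,d\xi,\qquad K(x,\xi):=e^{2\pi i x\cdot\xi}\int_{\mathbb{R}^2}e^{2\pi i(-\xi\cdot\Gamma(y)+\tau_xP(y))}\rho(y)\,dy .
\end{equation*}
By Plancherel it suffices to bound the operator $\mathfrak{A}$ with kernel $K(x,\xi)$. The kernel of $\mathfrak{A}\mathfrak{A}^*$ is
\begin{equation*}
\widetilde K(x,x-z)=\int\!\!\int \delta\big(\Gamma(y)-\Gamma(y')-z\big)\,e^{2\pi i(\tau_xP(y)-\tau_{x-z}P(y'))}\rho(y)\overline{\rho(y')}\,dy\,dy',
\end{equation*}
and $\widetilde K(x,z)=\overline{\widetilde K(z,x)}$. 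By Schur's test it suffices to prove $\int|\widetilde K(x,x-z)|\,dz\lesssim\lambda^{-2c}$ uniformly in $x$. The delta function forces $y'=y-z_{\le 2}$ together with the linear constraint $y\cdot z_{\le2}=\tfrac12(z_3+|z_{\le2}|^2)$. For $z_{\le2}\neq0$, parametrise this line by $y=t\,z_{\le2}^\perp/|z_{\le2}|+b_z$, which gives
\begin{equation*}
\widetilde K(x,x-z)=\frac{1}{|z_{\le2}|}\int_{\mathbb{R}} e^{2\pi i Q_z(t)}\,\phi_z(t)\,dt ,
\end{equation*}
where $\phi_z$ is a product of translates of $\rho$, with $C^1$ norm $O(1)$ and support of length $O(1)$. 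The coefficients of $t^3$ and $t^2$ in $Q_z$ are the analogues of \eqref{EqFiniteFieldsCoefficient1} and \eqref{EqFiniteFieldsCoefficient2}, namely
\begin{equation*}
(\tau_x-\tau_{x-z})\,P(u),\qquad (\tau_x-\tau_{x-z})\,(\cdots)+\tau_{x-z}\,\big(u_1^2z_1+u_2^2z_2\big),\qquad u:=z_{\le2}^\perp/|z_{\le2}|.
\end{equation*}
The support of $\rho$ forces $|z|\lesssim1$, so the $z$-integral runs over a bounded set. The factor $|z_{\le2}|^{-1}$ is integrable in two dimensions, so the degenerate region $z_{\le 2}\approx 0$ is harmless; in particular the set $|z_{\le2}|<\lambda^{-c_1}$ contributes only $O(\lambda^{-c_1})$.

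\textbf{Exceptional set and van der Corput.} Let $\Upsilon$ be the set of $z$ with $|z|\lesssim1$ and $|z_{\le 2}|\geq\lambda^{-c_1}$ for which $|P(u)|<\lambda^{-c_1}$ or $|u_1^2z_1+u_2^2z_2|<\lambda^{-c_1}$. Both conditions are sublevel sets of nondegenerate algebraic functions of the direction of $z_{\le2}$ (respectively of $z_{\le 2}$). Indeed $u_1^2z_1+u_2^2z_2=|z_{\le2}|\,(\text{a polynomial in }u)$, and that polynomial vanishes only at finitely many directions, as in the finite field count. Hence $|\Upsilon|\lesssim\lambda^{-c_1/O(1)}$. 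Outside $\Upsilon$, consider the two coefficients $a_3=(\tau_x-\tau_{x-z})P(u)$ and $a_2$:
\begin{itemize}
\item if $|\tau_x-\tau_{x-z}|\geq\lambda^{1/2}$, then $|a_3|\gtrsim\lambda^{1/2-c_1}$;
\item otherwise $|a_2|\gtrsim|\tau_{x-z}|\,\lambda^{-c_1}-\lambda^{1/2}\,O(1)\gtrsim\lambda^{1-2c_1}$, using $|\tau_{x-z}|>\lambda$.
\end{itemize}
In either case some derivative $Q_z^{(k)}$ with $k\in\{2,3\}$ is bounded below by $\lambda^{1/2-2c_1}$ on the whole support, since the lower-order terms of $Q_z^{(k)}$ are dominated there. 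Van der Corput's lemma with amplitude $\phi_z$ then gives $|\widetilde K(x,x-z)|\lesssim|z_{\le2}|^{-1}\lambda^{-c'}$. Integrating over the three regions (small $z_{\le2}$, the set $\Upsilon$ using the trivial bound $|\widetilde K|\lesssim|z_{\le2}|^{-1}$, and the good region) yields $\lambda^{-2c}$.

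\textbf{Main obstacle.} The delicate step is making the van der Corput step rigorous when the two coefficients interact. Near the threshold $|\tau_x-\tau_{x-z}|\approx\lambda^{1/2}$, one must check that the $t^2$ term is not cancelled by contributions of the $t^3$ term on a support of length $O(1)$. Balancing the thresholds as above (any exponent between $0$ and $1$ works) should handle this, because the $t^3$ coefficient contributes only $O(\lambda^{1/2})$ to the second derivative while the main $t^2$ term is of size about $\lambda^{1-2c_1}$. One must also make sure the measure bound for $\Upsilon$ accounts for the $1/|z_{\le2}|$ weight. Here the trivial bound $\int_\Upsilon|z_{\le2}|^{-1}\,dz\lesssim\lambda^{-c}$ follows by working in polar coordinates in $z_{\le 2}$, where $\Upsilon$ is a small angular sector times a bounded radial set, plus a sublevel set in the second condition.
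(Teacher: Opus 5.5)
Your proposal is correct and follows essentially the paper's route. The steps match: $TT^*$ (here just $AA^*$), Schur's test, restriction to the line $y\cdot z_{\le 2}=\frac12(z_3+|z_{\le2}|^2)$, an exceptional set built from the $t^3$ and $t^2$ coefficients, a case split on $|\tau_x-\tau_{x-z}|$, van der Corput, and a polar-coordinate measure bound. The differences are cosmetic: your exact kernel with the integrable weight $|z_{\le2}|^{-1}$ is the paper's $(w,s)$-parametrization in other coordinates (since $dz=2|w|\,dw\,ds$), used instead of smoothing by $(\chi^2)_R^\vee$, and your threshold $\lambda^{1/2}$ with the direct check $|Q_z''|\ge 2|a_2|-O(|a_3|)$ on $|t|\lesssim 1$ (taking $b_z$ orthogonal to $u$) replaces the paper's threshold $\lambda^{2c_1}$ and its appeal to the Stein--Wainger polynomial van der Corput lemma.
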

\begin{proof}
Note that the the operator $Af$ is local, so it suffices to show $\|Af\|_{L^2(B)}\lesssim \lambda^{-c}\|f\|_2$ for any fixed ball $B$ of radius $1$. By approximation, we can also assume $\hat{f}$ is supported in $\{|\xi|<R\}$ for some large $N$. Let $\{|\xi|\leq 1\}\prec \chi\prec \{|\xi|\leq 2\}$ be a bump function. We only need to show $\big{\|} 1_{B}(x)A \big( \chi(\frac{D}{R})f \big)(x) \big{\|}_2\lesssim \lambda^{-c}\|f\|_2$. Write $1_{B}(x)A \big( \chi(\frac{D}{R})f \big)(x)$ into a kernel representation on the Fourier side and apply Plancherel as before. Our goal becomes to show the following operator has operator norm $O(\lambda^{-c})$:
\begin{equation*}
\mathfrak{A}f:=\int_{\mathbb{R}^3} K(x, \xi) f(\xi) d\xi:=\int_{\mathbb{R}^3} e^{2\pi i x\cdot \xi} 1_B(x) \big( \int_{\mathbb{R}^2} e^{2\pi i\big( -\xi\cdot\Gamma(y)+\tau_{x}P(y)\big) }\rho (y) dy \big) \chi(\frac{\xi}{R}) f(\xi) d\xi.
\end{equation*}
Then we compute the kernel of $\mathfrak{A}\mathfrak{A}^*$ as
\begin{equation*}
\begin{split}
&\widetilde{K}(x, z):=\int_{\mathbb{R}^3} K(x, \xi)\overline{K(z, \xi)} d\xi \\
&=1_{B^2}(x, z)\int_{\mathbb{R}^3} e^{2\pi i( (x-z)\cdot\xi )} \chi^2(\frac{\xi}{R}) \big( \int_{(\mathbb{R}^2)^2} e^{2\pi i\big( -\xi\cdot\Gamma(y)+\tau_{x}P(y)\big) } e^{-2\pi i\big( -\xi\cdot\Gamma(y')+\tau_{z}P(y')\big) }\rho(y)\rho(y') \big) \\
&=1_{B^2}(x, z)\int_{(\mathbb{R}^2)^2} e^{2\pi i\big( \tau_{x}P(y)-\tau_{z}P(y') \big)} (\chi^2)_R^{\vee} \Big( x-z-\big( \Gamma(y)-\Gamma(y') \big) \Big) \rho(y)\rho(y').
\end{split}
\end{equation*}
Apply Schur's test, use the symmetry of $\widetilde{K}(x, z)$, and let $z\mapsto x-z$ as in the proof of Lemma \ref{LemmaFiniteFieldsMajorArc1}. It suffices to show the following for fixed $x$:
\begin{equation}  \label{EqContinuousSchurTest}
\int_{|z|<2} \Big| \int_{(\mathbb{R}^2)^2} e^{2\pi i\big( \tau_{x}P(y)-\tau_{x-z}P(y') \big)} (\chi^2)_R^{\vee} \Big( z-\big( \Gamma(y)-\Gamma(y') \big) \Big) \rho(y)\rho(y') \Big|\lesssim \lambda^{-2c}.
\end{equation}
Let $\widetilde{\mathcal{K}}(x, x-z)$ denote the inner integral $\int_{(\mathbb{R}^2)^2}\cdots$ in LHS of \eqref{EqContinuousSchurTest}. We now change the variable to fix the entry of $(\chi^2)_R^{\vee}(\cdot)$ so that we can exploit the oscillation. Let $w:=y-y'$ and $v:=\frac{w}{|w|}$. (Only need to consider the case $w\neq 0$.) Define $v^{\perp}$ to be the unique unit vector such that $v^{\perp}\cdot v=0$ and $\det (v, v^{\perp})>0$. Let $y=tv^{\perp}+sv=:U(t,s)$. Then
\begin{equation*}
\Gamma(y)-\Gamma(y')=(w, -|w|^2+2y\cdot w)=(w, -|w|^2+2s|w|).
\end{equation*}
Thus
\begin{equation} \label{EqContinuousSummand}
\begin{split}
\widetilde{\mathcal{K}}(x, x-z) & =\int_{w\in \mathbb{R}^2} \int_{s\in \mathbb{R}} (\chi^2)_R^{\vee}\big (z-(w, -|w|^2+2s|w|) \big) I(w, s) \\
&=\int_{0<|w|<10} \int_{|s|\lesssim 1} (\chi^2)_R^{\vee}\big (z-(w, -|w|^2+2s|w|) \big) I(w, s).
\end{split}
\end{equation}
Here
\begin{equation} \label{EqContinuousSummandOscillatoryPart}
I(w, s):=\int_{t\in \mathbb{R}} e^{2\pi i\big( \tau_{x}P(tv^{\perp}+sv)-\tau_{x-z}P(tv^{\perp}+sv-w) \big) } \rho\big( U(t, s) \big)\rho\big( U(t, s)-w \big).
\end{equation}
We shall select an exceptional set $\Upsilon\subset \{0<|w|<10\}$ such that $|\Upsilon|\lesssim \lambda^{-2c}$ and $|I(w, s)|\lesssim \lambda^{-2c}$ for $w\in \{0<|w|<10\}\backslash \Upsilon, |s|\lesssim 1$. With this $\Upsilon$, we can then use its properties to estimate
\begin{equation*}
|\widetilde{\mathcal{K}}(x, x-z)|\lesssim \int_{0<|w|<10} \int_{|s|\lesssim 1} \big| (\chi^2)_R^{\vee}\big (z-(w, -|w|^2+2s|w|) \big) \big| \big(\lambda^{-2c}+1_{\Upsilon}(w) \big),
\end{equation*}
so that $\text{LHS of \eqref{EqContinuousSchurTest}}=\int_{|z|<2} |\widetilde{\mathcal{K}}(x, x-z)|$ is bounded by
\begin{equation*}
\begin{split}
\int_{|z|<2} \int_{0<|w|<10} & \int_{|s|\lesssim 1}\big| (\chi^2)_R^{\vee}\big (z-(w, -|w|^2+2s|w|) \big) \big|\big(\lambda^{-2c}+1_{\Upsilon}(w) \big) \\
&\lesssim \lambda^{-2c}+\int_{|s|\lesssim 1} \int_{0<|w|<10} \int_{|z|<2}\big| (\chi^2)_R^{\vee}\big (z-(w, -|w|^2+2s|w|) \big) \big| 1_{\Upsilon}(w)  \\
&\lesssim \lambda^{-2c}+\int_{|s|\lesssim 1} \int_{0<|w|<10} 1_{\Upsilon}(w)\lesssim \lambda^{-2c}
\end{split}
\end{equation*}
as desired. 

We select $\Upsilon$ to ensure that the polynomial $Q_{w, s}(t):=\tau_{x}P(tv^{\perp}+sv)-\tau_{x-z}P(tv^{\perp}+sv-w)$ in RHS of \eqref{EqContinuousSummandOscillatoryPart} has a large cubic or quadratic coefficient for $w\in \{0<|w|<10\}\backslash\Upsilon$. The coefficients of $t^3$ and $t^2$ are
\begin{align}
& (\tau_{x}-\tau_{x-z})\big( (v_1^{\perp})^3+(v_2^{\perp})^3 \big), \label{EqContinuousCoefficient1} \\
(\tau_{x}-\tau_{x-z})\big( (v_1^{\perp})^2sv_1 & +(v_2^{\perp})^2sv_2 \big)+\tau_{x-z}\big( (v_1^{\perp})^2w_1+(v_2^{\perp})^2w_2 \big), \label{EqContinuousCoefficient2}
\end{align}
respectively. Recall $|\tau_{x}|>\lambda, \forall x\in \mathbb{R}^3$. Let $\Upsilon:=\{0<|w|<10: |(v_1^{\perp})^3+(v_2^{\perp})^3|\leq \lambda^{-c_1} \text{ or } |(v_1^{\perp})^2w_1+(v_2^{\perp})^2w_2|\leq \lambda^{-c_1} \}$. Let $w\in \{0<|w|<10\}\backslash\Upsilon$. Then if $|\tau_{x}-\tau_{x-z}|>\lambda^{2c_1}$, we have $|\eqref{EqContinuousCoefficient1}|>\lambda^{c_1}$; and if $|\tau_{x}-\tau_{x-z}|\leq \lambda^{2c_1}$, we have $|\eqref{EqContinuousCoefficient2}|>\lambda^{1-c_1}-O(\lambda^{2c_1})\gtrsim \lambda^{1-c_1}$. In each case, the estimate $|I(w, s)|\lesssim \lambda^{-2c}$ follows from the van der Corput lemma \cite[Proposition 2.1]{SWPolynomialCarleson}. It remains to check $|\Upsilon|\lesssim \lambda^{-2c}$. This is straightforward via the polar coordinate system $w=r(\cos \theta, \sin \theta)$. We have $v^{\perp}=(-\sin \theta, \cos \theta)$. Thus
\begin{align*}
|(v_1^{\perp})^3+(v_2^{\perp})^3|\leq \lambda^{-c_1} & \Longrightarrow |-\sin \theta+\cos \theta|\lesssim \lambda^{-c_1}, \\
|(v_1^{\perp})^2w_1+(v_2^{\perp})^2w_2|\leq \lambda^{-c_1}& \Longrightarrow |\sin \theta|\lesssim \lambda^{-\frac{c_1}{3}} \text{ or } |\cos \theta|\lesssim \lambda^{-\frac{c_1}{3}} \text{ or } |\sin \theta+\cos \theta|\lesssim \lambda^{-\frac{c_1}{3}},
\end{align*}
which implies $|\Upsilon|\lesssim \lambda^{-\frac{c_1}{3}}$ and completes the proof.
\end{proof}

\subsection{A bi-parameter major arc lemma in the continuous setting}
Now we prove Lemma \ref{LemmaContinuousMajorArcBi-Parameter}. By decompose $Af$ according to the magnitude of $\max\{|\tau_{1, x}|, |\tau_{2, x}| \}$, we can assume $\lambda<\max\{|\tau_{1, x}|, |\tau_{2, x}| \}\leq 2\lambda, \forall x\in \mathbb{R}^3$. Do the same localization procedure, apply Schur's test, use the symmetry of $\widetilde{K}(x, z)$, and let $z\mapsto x-z$ as in the proof of Lemma \ref{LemmaContinuousMajorArc}. It suffices to show the following analogue of \eqref{EqContinuousSchurTest} for fixed $x$:
\begin{equation}  \label{EqContinuousBi-ParameterSchurTest}
\int_{|z|<2} \Big| \int_{(\mathbb{R}^2)^2} e^{2\pi i\big( ( \tau_{1, x}y_1^3+\tau_{2, x}y_2^3)-(\tau_{1, x-z}{y'_1}^3+\tau_{2, x-z}{y'_2}^3) \big) } (\chi^2)_R^{\vee} \Big( z-\big( \Gamma(y)-\Gamma(y') \big) \Big) \rho(y)\rho(y') \Big|\lesssim \lambda^{-2c}.
\end{equation}
As there, let $\widetilde{\mathcal{K}}(x, x-z)$ denote the inner integral $\int_{(\mathbb{R}^2)^2}\cdots$ in LHS of \eqref{EqContinuousBi-ParameterSchurTest}. We have the same expression \eqref{EqContinuousSummand} for $\widetilde{\mathcal{K}}(x, x-z)$ with
\begin{equation} \label{EqContinuousBi-ParameterSummandOscillatoryPart}
I(w, s):=\int_{t\in \mathbb{R}} e^{2\pi iQ_{w, s}(t) } \rho\big( U(t, s) \big)\rho\big( U(t, s)-w \big).
\end{equation}    
The only difference is the definition of $Q_{w, s}(t)$:
\begin{equation*}
Q_{w, s}(t):=\tau_{1, x}(tv_1^{\perp}+sv_1)^3+\tau_{2, x}(tv_2^{\perp}+sv_2)^3-\tau_{1, x-z}(tv_1^{\perp}+sv_1-w_1)^3-\tau_{2, x-z}(tv_2^{\perp}+sv_2-w_2)^3.
\end{equation*}
We shall select an exceptional set $\Upsilon\subset \{(w, s): 0<|w|<10, |s|\lesssim 1\}$ such that $|\Upsilon|\lesssim \lambda^{-2c}$ and $|I(w, s)|\lesssim \lambda^{-2c}$ for $(w, s)\in \{(w, s): 0<|w|<10, |s|\lesssim 1\}\backslash\Upsilon$. With this $\Upsilon$, we can argue as in the proof of Lemma \ref{LemmaContinuousMajorArc} to get \eqref{EqContinuousBi-ParameterSchurTest}. For $k=1, 2, 3$, let $a_{w, s, k}$ denote the coefficient of $t^k$ in the polynomial $Q_{w, s}(t)$. We select $\Upsilon$ to ensure that, for $(w, s)\in \{(w, s): 0<|w|<10, |s|\lesssim 1\}\backslash\Upsilon$, at least one $a_{w, s, k}$ is large. We have
\begin{align*}
a_{w, s, 3}&=-(v_1^{\perp})^3\tau_{1, x-z}-(v_2^{\perp})^3\tau_{2, x-z}+(v_1^{\perp})^3\tau_{1, x}+(v_2^{\perp})^3\tau_{2, x}, \\
a_{w, s, 2}&=-(v_1^{\perp})^2(sv_1-w_1)\tau_{1, x-z}-(v_2^{\perp})^2(sv_2-w_2)\tau_{2, x-z}+(v_1^{\perp})^2(sv_1)\tau_{1, x}+(v_2^{\perp})^2(sv_2)\tau_{2, x}, \\
a_{w, s, 1}&=-v_1^{\perp}(sv_1-w_1)^2\tau_{1, x-z}-v_2^{\perp}(sv_2-w_2)^2\tau_{2, x-z}+v_1^{\perp}(sv_1)^2\tau_{1, x}+v_2^{\perp}(sv_2)^2\tau_{2, x}.
\end{align*}
Let $\tau_{j, x-z}\mapsto -\lambda\tau_{j, x-z}'$ and $\tau_{j, x}\mapsto \lambda b_j$ for $j=1, 2$. The above formulas for $a_{w, s, k}$ read as
\begin{equation}  \label{EqContinuousBi-ParameterCoefficients}
\begin{pmatrix} a_{w, s, 3} \\ a_{w, s, 2} \\ a_{w, s, 1} \end{pmatrix}:=\lambda\mathfrak{L}_{w, s}\begin{pmatrix} \tau_{1, x-z}' \\ \tau_{2, x-z}' \\ 1 \end{pmatrix},
\end{equation}
where the linear map $\mathfrak{L}_{w, s}$ is defined by
\begin{equation*}
\mathfrak{L}_{w, s}:=\begin{pmatrix} (v_1^{\perp})^3 & (v_2^{\perp})^3 & b_1(v_1^{\perp})^3+b_2(v_2^{\perp})^3 \\ (v_1^{\perp})^2(sv_1-w_1) & (v_2^{\perp})^2(sv_2-w_2) & b_1(v_1^{\perp})^2 sv_1+b_2(v_2^{\perp})^2 sv_2 \\ v_1^{\perp}(sv_1-w_1)^2 & v_2^{\perp}(sv_2-w_2)^2 & b_1v_1^{\perp}(sv_1)^2+b_2v_2^{\perp}(sv_2)^2 \end{pmatrix}.
\end{equation*}
Take $\Upsilon:=\{(w, s): 0<|w|<10, |s|\lesssim 1, |\det (\mathfrak{L}_{w, s})|\leq \lambda^{-c_1}\}$. Then for $(w, s)\in \{(w, s): 0<|w|<10, |s|\lesssim 1\}\backslash\Upsilon$ we have $\|\mathfrak{L}_{w, s}^{-1}\|\lesssim \lambda^{c_1}$, so that \eqref{EqContinuousBi-ParameterCoefficients} implies
\begin{equation*}
\left| \begin{pmatrix} a_{w, s, 3} \\ a_{w, s, 2} \\ a_{w, s, 1} \end{pmatrix} \right|\gtrsim \lambda \cdot \lambda^{-c_1}=\lambda^{1-c_1}.
\end{equation*}
The desired estimate $|I(w, s)|\lesssim \lambda^{-2c}$ then follows from the van der Corput lemma \cite[Proposition 2.1]{SWPolynomialCarleson}. It remains to check $|\Upsilon|\lesssim \lambda^{-2c}$. Use the polar coordinate system $w\mapsto r(\cos \theta, \sin \theta)$ as before. We have $v=(\cos \theta, \sin \theta), v^{\perp}=(-\sin \theta, \cos \theta)$. Below we calculate $\det (\mathfrak{L}_{w, s})$. Write
\begin{equation}  \label{EqContinuousBi-ParameterMatrix1}
\begin{split}
& \det(\mathfrak{L}_{w, s})=\det \begin{pmatrix} (v_1^{\perp})^3 & (v_2^{\perp})^3 & b_1(v_1^{\perp})^3+b_2(v_2^{\perp})^3 \\ (v_1^{\perp})^2(sv_1-w_1) & (v_2^{\perp})^2(sv_2-w_2) & b_1(v_1^{\perp})^2 sv_1+b_2(v_2^{\perp})^2 sv_2 \\ v_1^{\perp}(sv_1-w_1)^2 & v_2^{\perp}(sv_2-w_2)^2 & b_1v_1^{\perp}(sv_1)^2+b_2v_2^{\perp}(sv_2)^2 \end{pmatrix}  \\
&=\det \begin{pmatrix} -\sin^3 \theta & \cos^3 \theta & -b_1\sin^3 \theta+b_2 \cos^3 \theta \\ (s-r)\sin^2 \theta \cos \theta & (s-r)\cos^2 \theta \sin \theta & s( b_1\sin^2 \theta \cos \theta+b_2\cos^2 \theta \sin \theta) \\ (s-r)^2(-\sin \theta)\cos^2 \theta & (s-r)^2\cos \theta \sin^2 \theta & s^2 \big( b_1(-\sin \theta) \cos^2 \theta+b_2\cos \theta \sin^2 \theta \big) \end{pmatrix}  \\
&=(\cos^2 \theta\sin^2 \theta)\cdot \det \begin{pmatrix} -\sin^3 \theta & \cos^3 \theta & -b_1\sin^3 \theta+b_2 \cos^3 \theta \\ (s-r)\sin \theta  & (s-r)\cos \theta & s( b_1\sin \theta+b_2\cos \theta) \\ (s-r)^2(-\cos \theta) & (s-r)^2 \sin \theta & s^2 \big( b_1(-\cos \theta)+b_2\sin\theta \big) \end{pmatrix} \\
&=(r\cos^2 \theta\sin^2 \theta) \cdot \det \begin{pmatrix} -\sin^3 \theta & \cos^3 \theta & 0 \\ (s-r)\sin \theta  & (s-r)\cos \theta & b_1\sin \theta+b_2\cos \theta \\ (s-r)^2(-\cos \theta) & (s-r)^2 \sin \theta & (2s-r) \big( b_1(-\cos \theta)+b_2\sin\theta \big) \end{pmatrix},
\end{split}
\end{equation}
where in the last equation we subtracted $b_1(\text{the first column})+b_2(\text{the second column})$ from the third column and extracted $r$ outside the matrix. Moreover, we calculate the second term of RHS of \eqref{EqContinuousBi-ParameterMatrix1} as
\begin{equation*} 
\begin{split}
&-(\sin^3 \theta)\det\begin{pmatrix} (s-r)\cos \theta & b_1\sin \theta+b_2\cos \theta \\ (s-r)^2 \sin \theta & (2s-r) \big( b_1(-\cos \theta)+b_2\sin\theta \big) \end{pmatrix} \\
&\ \ \ \ \ \ -(\cos^3 \theta) \det \begin{pmatrix} (s-r)\sin \theta & b_1\sin \theta+b_2\cos \theta \\ (s-r)^2(-\cos \theta) & (2s-r) \big( b_1(-\cos \theta)+b_2\sin\theta \big) \end{pmatrix} \\
&=-(s-r)(\sin^3 \theta) \big( -sb_1 \cos^2 \theta+sb_2\sin \theta \cos \theta-(s-r)b_1 \big) \\
&\ \ \ \ \ \ -(s-r)(\cos^3 \theta) \big( -sb_1 \cos \theta \sin \theta+sb_2\sin^2 \theta+(s-r)b_2 \big) \\
&=-(s-r) \big( -sb_1 \sin\theta \cos^2 \theta+sb_2\cos \theta \sin^2 \theta+(s-r)(b_2\cos^3 \theta-b_1\sin^3 \theta) \big).
\end{split}
\end{equation*}
Plug this into \eqref{EqContinuousBi-ParameterMatrix1}. We get
\begin{equation*}
\det(\mathfrak{L}_{w, s})=-r(s-r)(\cos^2 \theta\sin^2 \theta)\big( -sb_1 \sin\theta \cos^2 \theta+sb_2\cos \theta \sin^2 \theta+(s-r)(b_2\cos^3 \theta-b_1\sin^3 \theta) \big).
\end{equation*}
Thus $\Upsilon$ is contained in the intersection of $\{(w, s): 0<|w|<10, |s|\lesssim 1\}$ and
\begin{equation*}
\begin{split}
\{r\leq \lambda^{-\frac{c_1}{4}}\} & \cup \{|s-r|\leq \lambda^{-\frac{c_1}{4}}\}\cup \{|\cos^2 \theta\sin^2 \theta|\leq \lambda^{-\frac{c_1}{4}}\} \cup \{|b_2\cos^3 \theta-b_1\sin^3 \theta|\leq \lambda^{-\frac{c_1}{8}}\} \\
&\cup \big{\{}\big| \frac{-sb_1 \sin\theta \cos^2 \theta+sb_2\cos \theta \sin^2 \theta}{b_2\cos^3 \theta-b_1\sin^3 \theta}+s-r \big| \leq \lambda^{-\frac{c_1}{8}} \big{\}}.
\end{split}
\end{equation*}
Recall $b_1, b_2$ are constants such that $|(b_1, b_2)|\sim 1$. By Fubini and the elementary inequality $|b_2\cos^3 \theta-b_2\sin^3 \theta|\gtrsim |b_2^{\frac{1}{3}}\cos\theta-b_1^{\frac{1}{3}}\sin\theta|^3$, the intersection of $\{(w, s): 0<|w|<10, |s|\lesssim 1\}$ and each above set has measure $O(\lambda^{-2c})$, so $|\Upsilon|\lesssim \lambda^{-2c}$. This completes the proof of Lemma \ref{LemmaContinuousMajorArcBi-Parameter}.

\section{Finding Patterns in Finite Fields}  \label{SectionFindingPatterns}
In $\mathbb{F}_q^d$, let $\Delta_h f(x)$ denote the standard multiplicative discrete derivative $f(x)\overline{f(x+h)}$. Define $\Delta_{h_1, \cdots, h_n}f:=\Delta_{h_1}\cdots \Delta_{h_n}f$. We will use Gowers norms defined as follows for $n\in \mathbb{N}_{+}$:
\begin{equation*}
\|f\|_{U^n}^{2^n}:=\mathbb{E}_{x, h_1, \cdots, h_n} \Delta_{h_1, \cdots, h_n}f(x)
\end{equation*}
We refer the reader to \cite{TaoHigherOrderFourierAnalysis} and \cite[Appendix A]{PPNonlinearRoth} for basic properties of Gowers norms.

The main goal of this section is to establish an inverse theorem (Theorem \ref{TheoremFindingPatternsInverse} below) for the counting operator
\begin{equation}  \label{EqFindingPatternsCountingOperator}
\Lambda(f_0, f_1, f_2):=\mathbb{E}_{x\in \mathbb{F}_q^4, y\in \mathbb{F}_q^2} f_0(x)f_1 \big( x_{\leq 3}-\Gamma(y), x_4 \big) f_2\big( x_{\leq 3}, x_4-P(y) \big).
\end{equation}
\begin{theorem}  \label{TheoremFindingPatternsInverse}
Let $q$ be a prime and $\delta\in (0, 1)$. Assume $q>C_1\delta^{-C_1}$. Let $f_0, f_1, f_2: \mathbb{F}_q^4\rightarrow \mathbb{C}$ be $1$-bounded functions. Suppose ($\Lambda$ defined by \eqref{EqFindingPatternsCountingOperator})
\begin{equation*}
|\Lambda(f_0, f_1, f_2)|>\delta.
\end{equation*}
Then
\begin{align}
\mathbb{E}_{x_4\in \mathbb{F}_q} |\mathbb{E}_{x_{\leq 3}\in \mathbb{F}_q^3}f_1(\cdot, x_4)|&\gtrsim \delta^{O(1)}, \label{EqFindingPatternsInverse1} \\
\mathbb{E}_{x_{\leq 3}\in \mathbb{F}_q^3} |\mathbb{E}_{x_4\in \mathbb{F}_q}f_2(x_{\leq 3}, \cdot)|&\gtrsim \delta^{O(1)}. \label{EqFindingPatternsInverse2}
\end{align}
\end{theorem}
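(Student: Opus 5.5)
We follow the standard Peluse--Prendiville scheme: Gowers norm control, then degree lowering, with Lemma \ref{LemmaFiniteFieldsMajorArc2} as the key input for lowering.

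\textbf{Step 1: Gowers norm control.} Apply the PET/Cauchy--Schwarz machinery from \cite{KucaPolynomialCornerFiniteFields} to $\Lambda$. Substituting $x\mapsto x+(0,0,0,P(y))$ and using Cauchy--Schwarz in $x$ removes $f_0$. This leaves an average of $f_1(x_{\le 3}-\Gamma(y),x_4+P(y))\,f_2(x)$-type configurations. The aim is a bound of the form $|\Lambda(f_0,f_1,f_2)|\le \|f_2\|_{U^s(\{0\}^3\times\mathbb{F}_q)}^{c}+O(q^{-c})$, a Gowers norm in the $x_4$-direction only, uniformly in $x_{\le 3}$ and averaged over it. Since $P$ and $\Gamma$ are polynomials of bounded degree, repeated van der Corput differencing in $y$ terminates after $O(1)$ steps. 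The linear-in-$h$ terms produced in the fourth coordinate have nonvanishing leading coefficient, so this control follows from the general results quoted there.

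\textbf{Step 2: Degree lowering.} Suppose $\|f_2\|_{U^s}$ in the $x_4$-direction is $\gtrsim\delta^{O(1)}$ with $s\ge 2$. Following \cite{PPSSquareCorner}, replace $f_2$ by the dual function
\[
F(x):=\mathbb{E}_{y}f_0(x_{\le3},x_4+P(y))\,f_1(x_{\le3}-\Gamma(y),x_4+P(y)).
\]
By Cauchy--Schwarz, $\Lambda(f_0,f_1,F)\gtrsim\delta^{O(1)}$, so $F$ also has large $U^s$ norm. The inverse theorem for $U^s$, via $s-2$ differencings in $h$, produces phases $e(\tau\cdot x_4)$ with frequencies $\tau_{x_{\le3},h}$ correlating with $\Delta_{h}F$. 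Taking the Fourier transform in $x_4$ turns the correlation into an expression of the form $\mathbb{E}_{x_{\le3}}|\mathbb{E}_y g(x_{\le3}-\Gamma(y))\,e(\tau_{x_{\le3}}P(y))|$, after fixing $x_4$ and the differencing parameters by pigeonholing. This is exactly \eqref{EqMajorArcRadon}. Lemma \ref{LemmaFiniteFieldsMajorArc2}, applied through Cauchy--Schwarz to pass to the $L^2$ form with $X$ the set of popular $x_{\le3}$, forces $\tau_{x_{\le3}}=0$ on a set of density $\delta^{O(1)}$. Hence the $U^s$ norm can be lowered to $U^{s-1}$ at polynomial cost, and iterating gives largeness of $U^1$. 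In the $x_4$-direction this is precisely \eqref{EqFindingPatternsInverse2}. The hypothesis $q>C_1\delta^{-C_1}$ absorbs all error terms $O(q^{-c})$.

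\textbf{Step 3: The bound for $f_1$.} Once \eqref{EqFindingPatternsInverse2} is known for any admissible $f_2$, we deduce \eqref{EqFindingPatternsInverse1} by replacing $f_2$ with $f_2-\mathbb{E}_{x_4}f_2$. Then $\Lambda(f_0,f_1,\mathbb{E}_{x_4}f_2)$ remains $\gtrsim\delta^{O(1)}$. This reduces matters to a two-function average in which the $x_4$-coordinate is inert, namely $\mathbb{E}\,f_0\,f_1(x_{\le3}-\Gamma(y),x_4)\,g(x_{\le3})$. Fourier analysis on $\mathbb{F}_q^3$, using the curvature of $\Gamma$ (Gauss sums give $|\widehat{\sigma_\Gamma}(\xi)|\lesssim q^{-1}$ for $\xi\ne0$), shows that only the zero frequency of $f_1(\cdot,x_4)$ survives up to $O(q^{-1/2})$. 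This yields \eqref{EqFindingPatternsInverse1}.

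\textbf{Main obstacle.} The hard step is Step 2: converting the $U^s$ dual-function information into exactly the shape \eqref{EqMajorArcRadon} with a single function $g$ on $\mathbb{F}_q^3$. The frequencies obtained depend on $x_{\le3}$ and on the auxiliary differencing variables, and one must check that pigeonholing these still lets Lemma \ref{LemmaFiniteFieldsMajorArc2} apply with a set $X$ of density $\delta^{O(1)}$.
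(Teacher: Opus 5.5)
\textbf{The gap is at the end of your Step 2.} Your iteration lowers the Gowers norm of the dual function $F$. What it delivers is therefore $\mathbb{E}_{x_{\le 3}}|\mathbb{E}_{x_4}F(x_{\le 3},x_4)|^2\gtrsim\delta^{O(1)}$. This has the same shape as \eqref{EqFindingPatternsInverse2}, but it concerns a different function. $F$ is built from $f_0$ and $f_1$ alone: for $f_0=f_1\equiv 1$ one has $F\equiv 1$, and your conclusion holds for every $f_2$. Nothing in the degree-lowering chain transfers largeness from $F$ back to $f_2$.

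So \eqref{EqFindingPatternsInverse2} is not established. Your Step 3, which derives \eqref{EqFindingPatternsInverse1} from \eqref{EqFindingPatternsInverse2}, therefore has nothing to stand on.

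What the chain actually gives is \eqref{EqFindingPatternsInverse1}. Substituting $x_4\mapsto x_4-P(y)$ gives $\mathbb{E}_{x_4}F(x_{\le 3},x_4)=\mathbb{E}_{x_4}\mathbb{E}_{y}f_0(x_{\le 3},x_4)f_1(x_{\le 3}-\Gamma(y),x_4)$. After dropping the square and absorbing the phases into $f_0$, you land exactly on the two-function average with $x_4$ inert from your Step 3. Lemma~\ref{LemmaFindingPatternsShorterProgressions}(1), applied for each fixed $x_4$, then yields \eqref{EqFindingPatternsInverse1}. This is what the paper does.

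\textbf{The missing idea is a second duality, now in the $f_1$ slot.} The paper sets $F_1(x):=\mathbb{E}_{y}f_0(x_{\le 3}+\Gamma(y),x_4)f_2(x_{\le 3}+\Gamma(y),x_4-P(y))$. Then $\Lambda(f_0,f_1,f_2)=\langle f_1,F_1\rangle$, and hence $|\Lambda(f_0,\overline{F_1},f_2)|>\delta^2$. Apply \eqref{EqFindingPatternsInverse1} to $\overline{F_1}$ and substitute $x_{\le 3}\mapsto x_{\le 3}-\Gamma(y)$. This gives a two-function average in $x_4$ with $x_{\le 3}$ inert, which Lemma~\ref{LemmaFindingPatternsShorterProgressions}(2) turns into \eqref{EqFindingPatternsInverse2}.

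Your decomposition trick would also do this job if you run it with the roles swapped. Applying \eqref{EqFindingPatternsInverse1} at a threshold of size $q^{-c}$ shows $\Lambda(f_0,f_1-\mathbb{E}_{x_{\le 3}}f_1(\cdot,x_4),f_2)=O(q^{-c})$. The remaining term $\Lambda(f_0,\mathbb{E}_{x_{\le 3}}f_1(\cdot,x_4),f_2)$ is handled fiberwise in $x_{\le 3}$, using Lemma~\ref{LemmaExponentialSumFiniteFields} for $\mathbb{E}_{y}e(\xi P(y))$.

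\textbf{A separate point about $s\ge 3$.} There Lemma~\ref{LemmaFiniteFieldsMajorArc2} does not make the frequencies vanish. After the dual--difference interchange (Lemma~\ref{LemmaFindingPatternsDual-DifferenceInterchange}), it only forces $\Phi_{x_{\le 3}}(h^{(0)},h^{(1)})=0$ on a dense set, i.e.\ $\phi_{x_{\le 3}}$ agrees there with a function of lower rank. It is the low-rank control Lemma~\ref{LemmaFindingPatternsLowRankControl} that converts this into $U^{s-1}$-largeness. That is how the obstacle you flag is resolved.
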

Choose $\delta:=C_1 q^{-\frac{1}{C_1}}$ so that $q>C_1\delta^{-C_1}$. We can bound $|\Lambda|$ from above:
\begin{corollary}  \label{CorollaryFindingPatternsInverse}
Let $f_0, f_1, f_2: \mathbb{F}_q^4\rightarrow \mathbb{C}$ be $1$-bounded functions. Suppose at least one of the following holds
\begin{enumerate}
\item $\mathbb{E}_{x_4\in \mathbb{F}_q}f_2(x_{\leq 3}, \cdot)=0, \forall x_{\leq 3}\in \mathbb{F}_q^3$.
\item $\mathbb{E}_{x_{\leq 3}\in \mathbb{F}_q^3}f_1(\cdot, x_4)=0, \forall x_4\in \mathbb{F}_q$.
\end{enumerate}
Then we have ($c_0:=\frac{1}{C_1}$)
\begin{equation*}
|\Lambda(f_0, f_1, f_2)|\lesssim q^{-c_0}.
\end{equation*}
\end{corollary}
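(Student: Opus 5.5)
The plan is to argue by contraposition from Theorem \ref{TheoremFindingPatternsInverse}, which we take as given. Set $\delta:=C_1 q^{-\frac{1}{C_1}}$, as indicated just before the statement. First dispose of small $q$: if $\delta\geq 1$, i.e. $q\leq C_1^{C_1}$, the bound $|\Lambda(f_0,f_1,f_2)|\leq 1\lesssim q^{-c_0}$ is trivial because the $f_i$ are $1$-bounded and $q$ is bounded. So assume $\delta\in(0,1)$.

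Next we check the hypothesis $q>C_1\delta^{-C_1}$ of Theorem \ref{TheoremFindingPatternsInverse}. We have $\delta^{-C_1}=C_1^{-C_1}q$, so $C_1\delta^{-C_1}=C_1^{1-C_1}q<q$ because $C_1\gg 1$. Hence the theorem applies at this scale.

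Now suppose, for contradiction, that $|\Lambda(f_0,f_1,f_2)|>\delta$. Theorem \ref{TheoremFindingPatternsInverse} then yields both \eqref{EqFindingPatternsInverse1} and \eqref{EqFindingPatternsInverse2}, with right-hand sides $\gtrsim\delta^{O(1)}>0$. Under condition 1, the inner average $\mathbb{E}_{x_4}f_2(x_{\leq 3},\cdot)$ vanishes identically, so the left side of \eqref{EqFindingPatternsInverse2} is $0$. Under condition 2, the left side of \eqref{EqFindingPatternsInverse1} is $0$ in the same way. Either case is a contradiction, so $|\Lambda|\leq\delta=C_1q^{-1/C_1}\lesssim q^{-c_0}$ with $c_0=1/C_1$.

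There is no real obstacle here. The only care needed is with the implicit constants: the lower bounds in Theorem \ref{TheoremFindingPatternsInverse} are strictly positive for every $\delta>0$, so any positive lower bound contradicts an exact zero, regardless of the implied constant in $\delta^{O(1)}$.
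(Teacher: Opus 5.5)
Your proposal is correct and is exactly the paper's argument: take $\delta:=C_1q^{-1/C_1}$, check that $q>C_1\delta^{-C_1}$, and apply Theorem \ref{TheoremFindingPatternsInverse} in the contrapositive, noting that each hypothesis makes the left side of \eqref{EqFindingPatternsInverse2} or \eqref{EqFindingPatternsInverse1} vanish. You also spell out the verification $C_1\delta^{-C_1}=C_1^{1-C_1}q<q$ and the trivial case $q\leq C_1^{C_1}$, both of which the paper leaves implicit.
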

To see Corollary \ref{CorollaryFindingPatternsInverse} implies Theorem \ref{TheoremFindingPatterns}, we note that LHS of \eqref{EqFindingPatterns} is precisely $\Lambda(1_A, 1_A, 1_A)$ and telescope $\Lambda(1_A, 1_A, 1_A)$ into
\begin{equation*}
\begin{split}
\Lambda \big( 1_A, \mathbb{E}_{x_{\leq 3}} & 1_A(\cdot, x_4), 1_A \big)+\Lambda \big( 1_A, 1_A-\mathbb{E}_{x_{\leq 3}} 1_A(\cdot, x_4), 1_A \big)=\Lambda \big( 1_A, \mathbb{E}_{x_{\leq 3}} 1_A(\cdot, x_4), \mathbb{E}_{x_4} 1_A(x_{\leq 3}, \cdot) \big) \\
&+\Lambda \big( 1_A, \mathbb{E}_{x_{\leq 3}} 1_A(\cdot, x_4), 1_A-\mathbb{E}_{x_4} 1_A(x_{\leq 3}, \cdot) \big)+\Lambda \big( 1_A, 1_A-\mathbb{E}_{x_{\leq 3}} 1_A(\cdot, x_4), 1_A \big).
\end{split}
\end{equation*}
The first term above is the main term in RHS of \eqref{EqFindingPatterns}, and the last two terms are $q^{-c_0}$ by Corollary \ref{CorollaryFindingPatternsInverse}. The lower bound
\begin{equation*}
\Lambda \big( 1_A, \mathbb{E}_{x_{\leq 3}} 1_A(\cdot, x_4), \mathbb{E}_{x_4} 1_A(x_{\leq 3}, \cdot) \big)\geq \alpha^3
\end{equation*}
follows from Sidorenko 3-edge path lemma \cite[Theorem 5.3.4]{ZhaoGraphAndAdditive}.

Below we establish Theorem \ref{TheoremFindingPatternsInverse} following the argument in \cite{PPSSquareCorner} .
\subsection{Gowers norm control}
We first control $|\Lambda|$ by Gowers norms in the $x_4$ direction.
\begin{proposition}  \label{PropositionFindingPatternsGowersNormControl1}
Let $g_0, g_1, g_2: \mathbb{F}_q^3\rightarrow \mathbb{C}$ be $1$-bounded functions. Then there exist $n_0\in \mathbb{N}_+$ and $c_3>0$ such that
\begin{equation*}
\big| \mathbb{E}_{x\in \mathbb{F}_q^3, y\in \mathbb{F}_q} g_0(x)g_1 \big( x_{\leq 2}-(y, y^2), x_3 \big) f_2\big( x_{\leq 2}, x_3-y^3 \big) \big|\leq \big( \mathbb{E}_{x_{\leq 2}\in \mathbb{F}_q^3} \|g_2(x_{\leq 2}, \cdot)\|_{U^{n_0}}^{2^{n_0}} \big)^{\frac{c_3}{2^{n_0}}}+O(q^{-c_3}).
\end{equation*}
\end{proposition}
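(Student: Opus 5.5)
The plan is to prove the proposition by PET induction, i.e.\ repeated van der Corput differencing, as in the Gowers norm control result of \cite{KucaPolynomialCornerFiniteFields}. I read the $f_2$ in the statement as $g_2$. The target is an average of multiplicative derivatives of $g_2$ taken only in the $x_3$-direction. The key structural fact is that the two shift vectors $(y,y^2,0)$ and $(0,0,y^3)$ have components in complementary coordinates and distinct degrees. Differencing in $y$ can therefore eliminate $g_1$ entirely while leaving shifts of $g_2$ purely in $x_3$.

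\textbf{Step 1.} Remove $g_0$ by Cauchy--Schwarz in $x$. This bounds the square of the left-hand side by
\begin{equation*}
\mathbb{E}_{x}\big|\mathbb{E}_y g_1\big(x_{\leq 2}-(y,y^2),x_3\big)\,g_2\big(x_{\leq 2},x_3-y^3\big)\big|^2 .
\end{equation*}
Expand the square with $y'=y+h$ and shift $x_{\leq 2}\mapsto x_{\leq 2}+(y,y^2)$. After the shift, the two $g_1$ factors become $g_1(x_{\leq 2},x_3)$ and $g_1\big(x_{\leq 2}-(h,2yh+h^2),x_3\big)$. The $g_2$ factors are evaluated at $\big(x_{\leq 2}+(y,y^2),\,x_3-y^3\big)$ and $\big(x_{\leq 2}+(y,y^2),\,x_3-(y+h)^3\big)$.

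\textbf{Step 2.} Apply Cauchy--Schwarz again to remove $g_1(x_{\leq 2},x_3)$, which no longer depends on $y$. The remaining $g_1$ factor now moves along a linear-in-$y$ shift $(0,2h)y$ in the $x_2$-coordinate. The degree of the $g_1$ shifts has dropped, while the $g_2$ shifts keep cubic-type dependence in $x_3$.

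\textbf{Step 3.} Iterate this. Each further round of van der Corput in $y$ lowers the $y$-degree of the $g_1$ shift in the $x_{\leq 2}$-coordinates by one. After $O(1)$ rounds, $g_1$ appears only through terms independent of $y$, and a final Cauchy--Schwarz removes it. We are left with an average of
\begin{equation*}
\Delta_{\,e_3 Q_1(h,y),\,\dots,\,e_3 Q_m(h,y)}\,g_2\big(x_{\leq 2}+\phi(y,h),\,x_3+\psi(y,h)\big),
\end{equation*}
where the $Q_i$ are polynomials in the differencing parameters $h=(h_1,\dots,h_k)$ and possibly $y$. The $x_{\leq 2}$-translation $\phi$ can be absorbed by a change of variables in $x_{\leq 2}$, since it is common to all factors. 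This is where having $g_2$'s shift purely in $x_3$ matters.

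\textbf{Step 4.} Continue PET on the $x_3$-polynomials until all differencing steps are along linear expressions $e_3\cdot(\text{nonzero multilinear polynomial in }h)\cdot$ (one free variable). By Hölder the average is then bounded by $\mathbb{E}_{x_{\leq 2}}\mathbb{E}_{h}\|g_2(x_{\leq 2},\cdot)\|$-type box averages along directions $c(h)e_3$. For all but $O(q^{-1})$ proportion of $h$ the coefficients $c(h)$ are nonzero, by a Schwartz--Zippel count. Rescaling $x_3$ in each direction by the nonzero $c(h)$ is a bijection of $\mathbb{F}_q$, and it turns these averages into $\|g_2(x_{\leq 2},\cdot)\|_{U^{n_0}}^{2^{n_0}}$. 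The exceptional $h$ contribute $O(q^{-1})$, and Hölder converts the result to the stated power $c_3/2^{n_0}$ plus $O(q^{-c_3})$.

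The main obstacle is the bookkeeping in Steps 3--4. One must choose the PET ordering so that $g_1$ is eliminated before $g_2$'s $x_3$-shifts degenerate, and verify that the final leading coefficients $c(h)$ are not identically zero. For this I would rely on \cite{KucaPolynomialCornerFiniteFields} directly, checking its hypotheses: the vector polynomials $(y,y^2,0)$ and $(0,0,y^3)$, and their difference, are essentially distinct and non-degenerate. Its conclusion then gives a box norm of $g_2$ along $e_3$, which is exactly the $U^{n_0}$ norm in $x_3$ averaged over $x_{\leq 2}$.
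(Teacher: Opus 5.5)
Your proposal ends up where the paper's proof does: the paper only notes that the family with columns $(-y,-y^2,0)$ and $(0,0,-y^3)$ is a nice family in the sense of \cite{KucaPolynomialCornerFiniteFields} and applies its Proposition 4.1, which is your fallback (what must be checked is Ku\v{c}a's niceness condition, not a generic ``essentially distinct'' condition). Do not present Steps 1--4 as a derivation, though: once $g_1$ is eliminated, the surviving $g_2$ factors do not share a common $x_{\leq 2}$-translation. The $y$ and $y+h'$ copies produced by the Cauchy--Schwarz in Step 2 differ by $(h',2yh'+h'^2)$, which depends on $y$ and survives any common change of variables, so naive PET gives box norms along directions with nonzero $x_2$-component; control purely along $e_3$ is exactly what the cited result supplies, not bookkeeping or a check that $c(h)\not\equiv 0$.
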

\begin{proof}
This is a special case of \cite[Proposition 4.1]{KucaPolynomialCornerFiniteFields}. Following the definition there, our special case corresponds to a nice family of polynomials:
\begin{equation*}
\begin{pmatrix} \mathcal{P}_1 \\ \mathcal{P}_2 \\ \mathcal{P}_3 \end{pmatrix}:=\begin{pmatrix} -y & 0 \\ -y^2 & 0 \\ 0 & -y^3 \end{pmatrix}.
\end{equation*}
Hence we can apply \cite[Proposition 4.1]{KucaPolynomialCornerFiniteFields}.
\end{proof}
Recall we have
\begin{equation*}
|\Lambda(f_0, f_1, f_2)|=\big| \mathbb{E}_{x\in \mathbb{F}_q^4, y\in \mathbb{F}_q^2} f_0(x)f_1 \big( x_{\leq 3}-(y_1, y_2, y_1^2+y_2^2), x_4 \big) f_2( x_{\leq 3}, x_4-y_1^3-y_2^3) \big|>\delta.
\end{equation*}
Apply Lemma \ref{LemmaPopularityPrinciple}. We get $>\frac{\delta}{2}q^2$ many pairs $(x_2, y_2)\in \mathbb{F}_q^2$ such that
\begin{equation*}
\big| \mathbb{E}_{x_1, x_3, x_4, y_1} f_0(x)f_1 \big( x_1-y_1, x_2-y_2, (x_3-y_1^2)-y_2^2), x_4 \big) f_2\big( x_{\leq 3}, (x_4-y_1^3)-y_2^3 \big) \big|>\frac{\delta}{2}.
\end{equation*}
Then applying Proposition \ref{PropositionFindingPatternsGowersNormControl1} to each such $(x_2, y_2)$ yields
\begin{equation*}
\mathbb{E}_{x_1, x_3} \|f_2(x_{\leq 3}, \cdot-y_2^3)\|_{U^{n_0}}^{2^{n_0}}>(\frac{\delta}{4})^{O(1)}.
\end{equation*}
Summing over these $(x_2, y_2)$ and using that Gowers norms are translation invariant, we obtain
\begin{equation}  \label{EqFindingPatternsGowersNormControl}
\mathbb{E}_{x_{\leq 3}} \|f_2(x_{\leq 3}, \cdot )\|_{U^{n_0}}^{2^{n_0}}\gtrsim \delta^{O(1)}.
\end{equation}
We have established
\begin{proposition} \label{PropositionFindingPatternsGowersNormControl2}
Let $q$ be a prime and $\delta\in (0, 1)$. Assume $q>C_1\delta^{-C_1}$. Let $f_0, f_1, f_2: \mathbb{F}_q^4\rightarrow \mathbb{C}$ be $1$-bounded functions. Suppose ($\Lambda$ defined by \eqref{EqFindingPatternsCountingOperator})
\begin{equation*}
|\Lambda(f_0, f_1, f_2)|>\delta.
\end{equation*}
Then \eqref{EqFindingPatternsGowersNormControl} holds for some $n_0\in \mathbb{N}_+$.
\end{proposition}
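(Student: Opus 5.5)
The plan is to reduce the four-dimensional count to the three-dimensional Gowers norm control of Proposition \ref{PropositionFindingPatternsGowersNormControl1}. We freeze the variables that Proposition \ref{PropositionFindingPatternsGowersNormControl1} does not see, namely $x_2$ and $y_2$. For fixed $(x_2,y_2)$, put $u:=x_1$, $u':=x_3$, $u'':=x_4$ and $y:=y_1$. Then the inner average of $\Lambda$ over $(x_1,x_3,x_4,y_1)$ becomes
\begin{equation*}
\mathbb{E}_{u,u',u'',y}\, g_0(u,u',u'')\,g_1\big(u-y,\,u'-y^2,\,u''\big)\,g_2\big(u,\,u',\,u''-y^3\big).
\end{equation*}
The three functions are
\begin{equation*}
g_0(u,u',u''):=f_0(u,x_2,u',u''),\quad g_1(a,b,c):=f_1(a,x_2-y_2,b-y_2^2,c),\quad g_2(a,b,c):=f_2(a,x_2,b,c-y_2^3).
\end{equation*}
All three are $1$-bounded on $\mathbb{F}_q^3$, and the average has exactly the shape treated in Proposition \ref{PropositionFindingPatternsGowersNormControl1}.

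First, since $|\Lambda|>\delta$ and each inner average is $1$-bounded, I apply Lemma \ref{LemmaPopularityPrinciple} to the random variable $(x_2,y_2)\mapsto|\text{inner average}|$ on $\mathbb{F}_q^2$. This gives a set $S$ of more than $\frac{\delta}{2}q^2$ pairs on which the inner average exceeds $\frac{\delta}{2}$.

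Next, for each pair in $S$ I invoke Proposition \ref{PropositionFindingPatternsGowersNormControl1}. Its error term $O(q^{-c_3})$ is at most $\delta/4$ once $q>C_1\delta^{-C_1}$ with $C_1\gg 1/c_3$. This yields
\begin{equation*}
\mathbb{E}_{x_1,x_3}\,\|f_2(x_1,x_2,x_3,\cdot-y_2^3)\|_{U^{n_0}}^{2^{n_0}}\geq \Big(\frac{\delta}{4}\Big)^{2^{n_0}/c_3}.
\end{equation*}
Gowers norms are invariant under translation in the last variable, so the shift by $y_2^3$ can be dropped. The resulting bound depends only on $x_2$, for every $x_2$ that occurs in some pair of $S$.

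Finally, I average over $x_2$. Each $x_2$ appears in at most $q$ pairs of $S$, so the set of good $x_2$ has size at least $\frac{\delta}{2}q$. All summands $\|\cdot\|_{U^{n_0}}^{2^{n_0}}$ are nonnegative, so the bad $x_2$ may be discarded. This gives
\begin{equation*}
\mathbb{E}_{x_{\leq 3}}\|f_2(x_{\leq3},\cdot)\|_{U^{n_0}}^{2^{n_0}}\geq \frac{\delta}{2}\Big(\frac{\delta}{4}\Big)^{2^{n_0}/c_3}\gtrsim\delta^{O(1)},
\end{equation*}
which is \eqref{EqFindingPatternsGowersNormControl}.

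The main obstacle is checking that Proposition \ref{PropositionFindingPatternsGowersNormControl1}, which is quoted from Kuca's result, genuinely applies after freezing. This requires the frozen family $(-y,-y^2,-y^3)$ to be a nice family in Kuca's sense, and the dependence on $y_2$ to enter only through translations of the functions and not through the polynomials. Both hold here because the pattern is additively separable in $y_1$ and $y_2$. The nonnegativity used in the last step also has to be checked; it holds since $\|\cdot\|_{U^{n}}^{2^n}\geq 0$ for $n\geq 1$.
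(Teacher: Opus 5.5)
Your proposal is correct and follows essentially the same route as the paper: the popularity principle over $(x_2,y_2)$, then Proposition~\ref{PropositionFindingPatternsGowersNormControl1} applied to each frozen slice, then translation invariance to drop the $y_2^3$ shift, then averaging over $x_2$. Your explicit bookkeeping (absorbing the $y_2$-dependence into $g_1,g_2$, and counting good $x_2$ via $\#S/q>\frac{\delta}{2}q$) just fills in details the paper leaves implicit in ``summing over these $(x_2,y_2)$''.
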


\subsection{Passing to the dual function} \label{SubsectionPassingToTheDualFunction}
The next step is to deduce a Gowers norm control for the dual function of $f_2$ instead of $f_2$ itself. We will work with dual function in the next degree-lowering step as it contains more information about the pattern we are counting.

Given $f_0, f_1$, let
\begin{equation*}
F_2(x):=\mathbb{E}_{y\in \mathbb{F}_q^2} f_0\big( x_{\leq 3}, x_4+P(y) \big)f_1 \big( x_{\leq 3}-\Gamma(y), x_4+P(y) \big),
\end{equation*}
so that
\begin{equation} \label{EqFindingPatternsDualFunctionProperty}
\Lambda(f_0, f_1, f_2)=\langle f_2, F_2 \rangle,\ \forall f_2.
\end{equation}
\begin{proposition}  \label{PropositionFindingPatternsGowersNormControlForDualFunction}
Under the same conditions of Proposition \ref{PropositionFindingPatternsGowersNormControl2}, we have
\begin{equation*}
\mathbb{E}_{x_{\leq 3}} \|F_2(x_{\leq 3}, \cdot )\|_{U^{n_0}}^{2^{n_0}}\gtrsim \delta^{O(1)}
\end{equation*}
for some $n_0\in \mathbb{N}_+$.
\end{proposition}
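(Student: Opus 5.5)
This is the standard dual-function trick of \cite{PPSSquareCorner}: apply Proposition \ref{PropositionFindingPatternsGowersNormControl2} to the triple $(f_0, f_1, \overline{F_2})$ in place of $(f_0, f_1, f_2)$.

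\emph{Step 1: a large correlation with $F_2$ itself.} By \eqref{EqFindingPatternsDualFunctionProperty} and Cauchy--Schwarz,
\begin{equation*}
\delta<|\Lambda(f_0, f_1, f_2)|=|\langle f_2, F_2\rangle|\leq \|f_2\|_2\|F_2\|_2\leq \|F_2\|_2 .
\end{equation*}
Hence $\Lambda(f_0, f_1, \overline{F_2})=\langle \overline{F_2}, F_2\rangle$, up to the conjugation convention in the inner product, equals $\|F_2\|_2^2>\delta^2$.

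\emph{Step 2: apply the earlier proposition.} The function $F_2$ is an average of products of $1$-bounded functions, so it is $1$-bounded, and so is $\overline{F_2}$. The hypothesis $q>C_1\delta^{-C_1}$ implies $q>C_1'(\delta^2)^{-C_1'}$ after adjusting the absolute constant $C_1$, which is allowed by the conventions. So Proposition \ref{PropositionFindingPatternsGowersNormControl2}, applied with $\delta^2$ in place of $\delta$ and $\overline{F_2}$ in place of $f_2$, gives
\begin{equation*}
\mathbb{E}_{x_{\leq 3}}\|\overline{F_2}(x_{\leq 3}, \cdot)\|_{U^{n_0}}^{2^{n_0}}\gtrsim \delta^{O(1)}.
\end{equation*}

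\emph{Step 3: remove the conjugate.} Gowers norms are conjugation invariant, since $\|\overline g\|_{U^n}^{2^n}=\overline{\|g\|_{U^n}^{2^n}}=\|g\|_{U^n}^{2^n}$ (the latter is real). This gives the claim, with the same $n_0$ as before.

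There is no real obstacle. The only point needing care is that the constant $C_1$ and the exponent hidden in $\delta^{O(1)}$ absorb the change from $\delta$ to $\delta^2$.
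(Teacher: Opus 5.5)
Your proposal is correct and matches the paper's proof: Cauchy--Schwarz combined with the dual-function identity gives $|\Lambda(f_0,f_1,\overline{F_2})|=\|F_2\|_2^2>\delta^2$. Proposition~\ref{PropositionFindingPatternsGowersNormControl2} is then applied with $\overline{F_2}$ and $\delta^2$, and conjugation invariance of Gowers norms finishes. Your explicit checks that $F_2$ is $1$-bounded and that the constant $C_1$ absorbs the passage from $\delta$ to $\delta^2$ are points the paper leaves implicit.
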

\begin{proof}
Applying \eqref{EqFindingPatternsDualFunctionProperty} twice together with Cauchy--Schwartz yields
\begin{equation*}
\delta<|\Lambda(f_0, f_1, f_2)|=|\langle f_2, F_2 \rangle|\leq \|f_2\|_2 \cdot \|F_2\|_2\leq \|F_2\|_2=|\langle \overline{F_2}, F_2 \rangle|^{\frac{1}{2}}=|\Lambda(f_0, f_1, \overline{F_2})|^{\frac{1}{2}}.
\end{equation*}
Then apply Proposition \ref{PropositionFindingPatternsGowersNormControl2} with $f_0, f_1, \overline{F_2}$ and $\delta$ replaced by $\delta^2$. The claim follows as Gowers norms are conjugate invariant.
\end{proof}

\subsection{Degree-lowering}
Now we exploits the intuition \footnote{We first learned this intuition from Prof. Prendiville's lecture notes. See \href{https://sites.google.com/view/seanprendiville/}{his homepage} for many relevant lecture notes.} that higher-complexity objects do not contribute seriously to our counting operator \eqref{EqFindingPatternsCountingOperator}. A rigorous version of this intuition is the following:
\begin{proposition}  \label{PropositionFindingPatternsDegree-Lowering}
Let $n\geq 2$. Let $q$ be a prime and $\delta\in (0, 1)$. Assume $q>C(n)\delta^{-C(n)}$. Let $f_0, f_1: \mathbb{F}_q^4\rightarrow \mathbb{C}$ be $1$-bounded functions. Then
\begin{equation*}
\mathbb{E}_{x_{\leq 3}\in \mathbb{F}_q^3} \|F_2(x_{\leq 3}, \cdot )\|_{U^n}^{2^n}>\delta \Longrightarrow \mathbb{E}_{x_{\leq 3}\in \mathbb{F}_q^3} \|F_2(x_{\leq 3}, \cdot )\|_{U^{n-1}}^{2^{n-1}}\gtrsim \delta^{O_n(1)}.
\end{equation*}
\end{proposition}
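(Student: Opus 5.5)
We follow the degree-lowering scheme of \cite{PPSSquareCorner}. Write $\|F_2(x_{\leq 3},\cdot)\|_{U^n}^{2^n}=\mathbb{E}_{h_1,\dots,h_{n-2}}\|\Delta_{h_1,\dots,h_{n-2}}F_2(x_{\leq 3},\cdot)\|_{U^2}^4$. The hypothesis and Lemma \ref{LemmaPopularityPrinciple} give $\gtrsim\delta^{O(1)}q^{n-2}$ tuples $h=(h_1,\dots,h_{n-2})$ with
\begin{equation*}
\mathbb{E}_{x_{\leq 3}}\|\Delta_h F_2(x_{\leq 3},\cdot)\|_{U^2}^4\gtrsim \delta^{O(1)}.
\end{equation*}
By the $U^2$ inverse theorem on $\mathbb{F}_q$ (here $\|g\|_{U^2}^4=\sum_\xi|\hat g(\xi)|^4\le\max_\xi|\hat g(\xi)|^2\|g\|_2^2$), for such $h$ there are a phase $\phi_h:\mathbb{F}_q^3\to\mathbb{F}_q$ and a set of $x_{\leq 3}$ of density $\gtrsim\delta^{O(1)}$ on which $|\mathbb{E}_{x_4}\Delta_hF_2(x)e(-\phi_h(x_{\leq 3})x_4)|\gtrsim\delta^{O(1)}$. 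Absorbing unimodular phases into a $1$-bounded function, we obtain, for such $h$, a $1$-bounded $b_h$ on $\mathbb{F}_q^3$ with
\begin{equation*}
\big|\mathbb{E}_{x\in\mathbb{F}_q^4}\Delta_hF_2(x)\,b_h(x_{\leq 3})\,e(\phi_h(x_{\leq 3})x_4)\big|\gtrsim\delta^{O(1)}.
\end{equation*}
The goal is to show $\phi_h=0$ on a large set, after which the conclusion follows by running this step backwards.

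Next we unfold $\Delta_hF_2$. Since $F_2$ is an average over $y$ of products $f_0(x_{\leq 3},x_4+P(y))f_1(x_{\leq 3}-\Gamma(y),x_4+P(y))$, $\Delta_hF_2$ is an average over $2^{n-2}$ copies of $y$ of such products. Apply Cauchy--Schwarz, or simply pigeonhole all the $y$-variables except the one attached to the undifferenced copy (the $h$-shifts only occur in $x_4$), and substitute $x_4\mapsto x_4-P(y)$. The $f_0$ factors then no longer depend on $y$, while $e(\phi_h(x_{\leq 3})x_4)$ becomes $e(\phi_h(x_{\leq 3})x_4)e(-\phi_h(x_{\leq 3})P(y))$. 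This produces a correlation
\begin{equation*}
\big|\mathbb{E}_{x\in\mathbb{F}_q^4}\,g_0(x)\,\mathbb{E}_{y}\,g_1\big(x_{\leq 3}-\Gamma(y),x_4\big)e\big(-\phi_h(x_{\leq 3})P(y)\big)\big|\gtrsim\delta^{O(1)},
\end{equation*}
where $g_0,g_1$ are $1$-bounded and $g_0$ absorbs $b_h$ and the remaining factors. Fixing $x_4$ by popularity, this is a lower bound on $\|1_X A g\|_2$ with $A$ as in \eqref{EqFiniteFieldsAveragingOperator} and $\tau=-\phi_h$. Lemma \ref{LemmaFiniteFieldsMajorArc2} then gives $\phi_h(x_{\leq 3})=0$ on a set $X_0$ with $\#X_0\gtrsim\delta^{O(1)}q^3$; the requirement $q>C(n)\delta^{-C(n)}$ is what makes the lemma applicable.

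It remains to convert ``$\phi_h=0$ often'' into $U^{n-1}$ control. Using $|\hat g(0)|^2\le\|g\|_{U^2}^2$ on the popular $x_{\leq 3}$, we get $|\mathbb{E}_{x_4}\Delta_hF_2(x_{\leq 3},x_4)|\gtrsim\delta^{O(1)}$ on a set of density $\delta^{O(1)}$. Squaring and averaging over $h$ gives
\begin{equation*}
\mathbb{E}_{x_{\leq 3}}\mathbb{E}_{h}\big|\mathbb{E}_{x_4}\Delta_hF_2\big|^2\gtrsim\delta^{O(1)},
\end{equation*}
and the left side is exactly $\mathbb{E}_{x_{\leq 3}}\|F_2(x_{\leq 3},\cdot)\|_{U^{n-1}}^{2^{n-1}}$. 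The main obstacle is the middle step: unfolding $\Delta_hF_2$ so that a single $y$ carries both the paraboloid shift in $f_1$ and the phase $\phi_h P(y)$, while the other factors become functions that are legitimately constant in the relevant variables. Care is needed because the phase is only defined for the undifferenced copy, and because Lemma \ref{LemmaFiniteFieldsMajorArc2} is stated with $\tau$ depending on $x_{\leq 3}$ only, which holds here since $\phi_h$ does not depend on $x_4$.
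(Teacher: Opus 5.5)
For $n=2$ your argument works and coincides with the paper's. For $n\geq 3$, however, the unfolding step fails.

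\textbf{Where it breaks.} Write $\Delta_hF_2(x)=\prod_{\omega\in\{0,1\}^{n-2}}\mathcal{C}^{|\omega|}F_2(x_{\leq 3},x_4+\omega\cdot h)$, where $\mathcal{C}$ denotes complex conjugation. Each copy carries its own average over an independent $y^{(\omega)}$.

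\begin{itemize}
\item Suppose you fix $y^{(\omega)}$ for $\omega\neq 0$ and substitute $x_4\mapsto x_4-P(y)$ with $y=y^{(0)}$. Then the other $2^{n-2}-1$ copies become functions of $(x_{\leq 3},x_4-P(y))$. Their $f_0$ and $f_1$ factors therefore do depend on $y$, contrary to your claim.
\item What you actually obtain is $\mathbb{E}_x\, g_0(x)\,\mathbb{E}_y\, G(x_{\leq 3},x_4-P(y))\,f_1(x_{\leq 3}-\Gamma(y),x_4)\,e(-\phi_h(x_{\leq 3})P(y))$. This is a phase-twisted copy of $\Lambda$ itself, with $G$ in the role of $f_2$. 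It is not a correlation with $Ag$ as in \eqref{EqFiniteFieldsAveragingOperator}, so Lemma \ref{LemmaFiniteFieldsMajorArc2} cannot be invoked.
\item If you skip the substitution, the undifferenced $f_0$ factor depends on $y$ instead.
\end{itemize}

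No change of variables in $x_4$ removes $2^{n-2}$ independent shifts $P(y^{(\omega)})$. Your intermediate goal ``$\phi_h=0$ on a dense set'' is not reachable this way.

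\textbf{The missing ingredients} are Lemmas \ref{LemmaFindingPatternsDual-DifferenceInterchange} and \ref{LemmaFindingPatternsLowRankControl}.

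\begin{itemize}
\item The dual--difference interchange is applied for each fixed $x_{\leq 3}$, after averaging the $2^{n-2}$-th powers of the correlations over $h\in\mathcal{H}(x_{\leq 3})$. It uses Cauchy--Schwarz $n-2$ times to replace $\Delta_hF_2$ by $\mathbb{E}_y\Delta_{h^{(0)}-h^{(1)}}F_y$ with a \emph{single} $y$, where $F_y(x)=f_0(x_{\leq 3},x_4+P(y))f_1(x_{\leq 3}-\Gamma(y),x_4+P(y))$.
\item Now every factor is shifted by the same $P(y)$. After pigeonholing $h^{(1)}$ and $x_4$, your substitution produces exactly the input of Lemma \ref{LemmaFiniteFieldsMajorArc2}.
\item The price is that the phase becomes $\Phi_{x_{\leq 3}}(h^{(0)},h^{(1)})=\sum_{\omega\in\{0,1\}^{n-2}}(-1)^{|\omega|}\phi_{x_{\leq 3}}(h^{(\omega)})$. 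The major arc lemma then gives $\Phi=0$, not $\phi=0$, on a dense set of pairs $(x_{\leq 3},h^{(0)})$.
\item With $h^{(1)}$ fixed, this says $\phi_{x_{\leq 3}}(h^{(0)})$ agrees on that set with a sum of functions, each independent of at least one coordinate of $h^{(0)}$.
\end{itemize}

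Consequently your final step, reading off $|\mathbb{E}_{x_4}\Delta_hF_2|$ from the vanishing of $\phi_h$ and squaring, must be replaced by Lemma \ref{LemmaFindingPatternsLowRankControl}. That lemma converts correlation of $\Delta_hF_2(x_{\leq 3},\cdot)$ with such lower-rank linear phases, on average over $h$, into a lower bound for $\|F_2(x_{\leq 3},\cdot)\|_{U^{n-1}}$ for $\gtrsim\delta^{O(1)}q^3$ many $x_{\leq 3}$.
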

\begin{remark}
The constants $C(n)$ is chosen to be large depending on $C_0$ in Lemma \ref{LemmaFiniteFieldsMajorArc2}.
\end{remark}
To establish Proposition \ref{PropositionFindingPatternsDegree-Lowering}, we need the following dual-difference interchange lemma and low rank control lemma. Both lemmas are proved in \cite{PPNonlinearRoth} in the integer setting, and the same proofs work in the finite field setting.

\begin{lemma}[{\cite[Lemma 6.3]{PPNonlinearRoth}}]  \label{LemmaFindingPatternsDual-DifferenceInterchange}
Let $Y$ be any index set and $F_y: \mathbb{F}_q\rightarrow \mathbb{C}$ be $1$-bounded functions for $y\in Y$. Set $F(x):=\mathbb{E}_{y\in Y} F_y(x)$. Then for any $\mathcal{H}\subset \mathbb{F}_q^n$ and $\phi: \mathbb{F}_q^n\rightarrow \mathbb{F}_q$ we have
\begin{equation*}
\Big( \mathbb{E}_{h\in \mathcal{H}} \big| \mathbb{E}_{x\in \mathbb{F}_q}\Delta_h F(x)e\big( \phi(h)x \big) \big|   \Big)^{2^n}\lesssim \mathbb{E}_{h^{(0)}, h^{(1)}\in \mathcal{H}} \big| \mathbb{E}_{x\in \mathbb{F}_q} \mathbb{E}_{y\in Y} (\Delta_{h^{(0)}-h^{(1)}} F_y)(x)e\big( \Phi(h^{(0)}, h^{(1)})x \big) \big|.
\end{equation*}
Here $\Phi(h^{(0)}, h^{(1)}):=\sum_{\omega\in \{0,1\}^n} (-1)^{|\omega|} \phi(h^{(\omega)})$ with $h^{(\omega)}:=(h_1^{(\omega_1)}, \cdots, h_n^{(\omega_n)})$.
\end{lemma}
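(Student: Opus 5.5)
The plan is to prove the dual--difference interchange bound by $n$ applications of Cauchy--Schwarz. The key point is that $\Delta_h F$ is a $2^n$-fold product of (conjugated) translates of $F=\mathbb{E}_{y}F_y$. We expand one factor at a time into its average over $y$, and each Cauchy--Schwarz step doubles the remaining variables. First we remove the absolute value: choose $1$-bounded phases $c(h)$ so that
\begin{equation*}
\mathbb{E}_{h\in\mathcal{H}}\big|\mathbb{E}_x\Delta_hF(x)e(\phi(h)x)\big|=\mathbb{E}_{h\in\mathcal{H}}c(h)\,\mathbb{E}_x\Delta_hF(x)e(\phi(h)x).
\end{equation*}
Here $\mathbb{E}_{h\in\mathcal{H}}$ is read with the paper's normalisation. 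Then we write $\Delta_hF(x)=\prod_{\omega\in\{0,1\}^n}\mathcal{C}^{|\omega|}F(x+\omega\cdot h)$.

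We single out the variable $h_1$. Writing $h=(h_1,h')$, the factors with $\omega_1=0$ do not depend on $h_1$. So we substitute $x\mapsto x$ and group the expression as
\begin{equation*}
\mathbb{E}_{x,h'}\Big(\prod_{\omega_1=0}\cdots\Big)\,\mathbb{E}_{h_1}\big[c(h)1_{\mathcal{H}}(h)e(\phi(h)x)\prod_{\omega_1=1}\cdots\big].
\end{equation*}
Applying Cauchy--Schwarz in $(x,h')$ bounds the first bracket trivially and duplicates $h_1$ into $h_1^{(0)},h_1^{(1)}$. The $\omega_1=1$ factors then combine into a product of $F$ and $\overline{F}$ at shifts involving $h_1^{(0)}-h_1^{(1)}$. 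This is the standard step that yields differences $h^{(0)}-h^{(1)}$, after the change of variables $x\mapsto x-h_1^{(1)}$. The phases $e(\phi(h)x)$ combine into $e((\phi(h^{(0)}_1,h')-\phi(h^{(1)}_1,h'))x)$.

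Iterating over $h_2,\dots,h_n$ squares the bound each time, for $2^n$ in total. Each step keeps the product of the $\omega$-sign-weighted phases, which produces $\Phi(h^{(0)},h^{(1)})$. At the final stage, one of the $2^n$ copies of $F$ is still unexpanded. At that point we insert $F=\mathbb{E}_yF_y$ for all factors and apply one more Cauchy--Schwarz (or Hölder) to place the $y$-average outside. The cleaner alternative is to expand $F=\mathbb{E}_yF_y$ at the very start only in the factor $\omega=0$. We then Cauchy--Schwarz the other $2^n-1$ factors away, using the $1$-boundedness of $F$. After $n$ steps this leaves
\begin{equation*}
\mathbb{E}_{h^{(0)},h^{(1)}}\big|\mathbb{E}_x\mathbb{E}_yC(\cdot)\Delta_{h^{(0)}-h^{(1)}}F_y(x)e(\Phi x)\big|,
\end{equation*}
possibly with bounded weights that do not depend on $x$ and $y$. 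These weights are removed by the triangle inequality.

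The main obstacle is the bookkeeping. We must ensure that after each Cauchy--Schwarz the surviving function is genuinely $\Delta_{h^{(0)}-h^{(1)}}F_y$ for a single common $y$. We must also track that the phase is exactly $\Phi(h^{(0)},h^{(1)})$ up to a change of variables in $x$. That change of variables only multiplies by a unimodular factor independent of $x$, which the absolute value absorbs. The indicator $1_{\mathcal{H}}$ and the normalisation of $\mathbb{E}_{h\in\mathcal{H}}$ only produce the implied constant. Since this is exactly \cite[Lemma 6.3]{PPNonlinearRoth}, we follow that proof verbatim, with $\mathbb{Z}$ replaced by $\mathbb{F}_q$. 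Translation invariance in $\mathbb{F}_q$ makes the argument easier, because no boundary cutoffs are needed.
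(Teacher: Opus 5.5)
The paper gives no argument of its own: it cites \cite[Lemma 6.3]{PPNonlinearRoth} and says the proof transfers to $\mathbb{F}_q$. Your skeleton is that argument: $n$ Cauchy--Schwarz steps, each splitting one $h_i$ into $h_i^{(0)},h_i^{(1)}$, with the phases telescoping into $\Phi(h^{(0)},h^{(1)})$ and the shifts $x\mapsto x-h_i^{(1)}$ absorbed into $x$-independent $1$-bounded weights. However, the point that distinguishes this lemma from routine Gowers--Cauchy--Schwarz bookkeeping is that all $2^n$ surviving factors carry the \emph{same} $y$. Neither of your two versions achieves this.

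Your first version fails. It runs everything with $F$, inserts $F=\mathbb{E}_yF_y$ at the end, and applies one more Cauchy--Schwarz or H\"older. At that stage each of the $2^n$ factors of $\Delta_{h^{(0)}-h^{(1)}}F$ carries its own independent average $\mathbb{E}_{y_\omega}$. No further Cauchy--Schwarz turns $\prod_\omega\mathbb{E}_{y_\omega}$ into $\mathbb{E}_y\prod_\omega$, and an extra application would change the exponent $2^n$ anyway.

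Your second version expands the factor $\omega=0$. But in your own scheme the first Cauchy--Schwarz bounds the whole $\omega_1=0$ bracket trivially, and that bracket contains $\omega=0$, so you again end with $\Delta_{h^{(0)}-h^{(1)}}F$. The only original factor whose copies survive all $n$ steps is $\omega=(1,\dots,1)$, i.e.\ $\mathcal{C}^nF(x+h_1+\cdots+h_n)$. After step $i$, the factor in position $\omega$ descends from it exactly when $\omega_{i+1}=\cdots=\omega_n=1$. So expand $F=\mathbb{E}_yF_y$ in that factor alone. Alternatively, keep $\omega=0$, but substitute $x\mapsto x-h_i$ before step $i$ and discard the $\omega_i=1$ bracket instead.

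You also need $y$ as an outer variable at every step. That is, apply Cauchy--Schwarz in $(y,x,(h_j)_{j\neq i})$ with $\mathbb{E}_y$ outside the square, not in $(x,h')$ as you wrote. Otherwise squaring produces independent $y,y'$ in the two copies.

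With these changes the argument closes:
\begin{itemize}
\item the discarded factors are $1$-bounded;
\item all $2^n$ final factors are $F_y$ for one common $y$;
\item the final weight is bounded by $\prod_\omega 1_{\mathcal{H}}(h^{(\omega)})\le 1_{\mathcal{H}}(h^{(0)})1_{\mathcal{H}}(h^{(1)})$, which gives the stated right-hand side.
\end{itemize}
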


\begin{lemma}[{\cite[Lemma 6.4]{PPNonlinearRoth}}]  \label{LemmaFindingPatternsLowRankControl}
Let $0\leq m\leq n$ and $\phi_1, \cdots, \phi_m: \mathbb{F}_q^{n-1}\rightarrow \mathbb{F}_q$ be arbitrary functions. Let $f: \mathbb{F}_q\rightarrow \mathbb{C}$ be $1$-bounded. Then
\begin{equation*}
\mathbb{E}_{h_1, \cdots, h_n\in \mathbb{F}_q} \Big| \mathbb{E}_{x\in \mathbb{F}_q} \Delta_h f(x)e \Big( \big( \sum_{j=1}^m \phi_j(h_1, \cdots, h_{j-1}, h_{j+1}, \cdots, h_n) \big)x \Big) \Big|\lesssim \|f\|_{U^{n+1}}^{2^{n-m}}.
\end{equation*}
\end{lemma}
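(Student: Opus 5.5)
The plan is to argue by induction on $m$, removing one phase $\phi_j$ at a time with a single Cauchy--Schwarz (van der Corput type) step. Each step should cost exactly one square root, which turns the exponent $2^{n-m+1}$ into $2^{n-m}$. Write $h=(h_1,\dots,h_n)$ and $\Delta_h f=\Delta_{h_1}\cdots\Delta_{h_n}f$.

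\emph{Base case $m=0$.} There is no phase. By Cauchy--Schwarz in $h$,
\begin{equation*}
\mathbb{E}_h|\mathbb{E}_x\Delta_h f(x)|\leq \big(\mathbb{E}_h|\mathbb{E}_x\Delta_h f(x)|^2\big)^{1/2}.
\end{equation*}
Expanding the square as $\mathbb{E}_{x,h_{n+1}}\Delta_{h,h_{n+1}}f(x)$ gives exactly $\|f\|_{U^{n+1}}^{2^{n+1}}$, so the right-hand side is $\|f\|_{U^{n+1}}^{2^n}$, as required.

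\emph{Inductive step.} Assume the bound for $m-1$ phases, for every $n$ and every choice of phase functions. Fix $h':=(h_j)_{j\neq m}$ and put $\alpha:=\phi_m(h')$. This frequency does not depend on $h_m$. Let $\psi(h_m):=\sum_{j<m}\phi_j(\dots)$, with $h'$ suppressed, and set $G:=\Delta_{h'}f$ and $A(x):=G(x)e(\alpha x)$. Then $\Delta_h f(x)=G(x)\overline{G(x+h_m)}$, and the inner quantity is
\begin{equation*}
\mathbb{E}_{h_m}\big|\mathbb{E}_x A(x)\overline{G(x+h_m)}e(\psi(h_m)x)\big|.
\end{equation*}
Remove the absolute values with unimodular weights $c(h_m)$ and substitute $y=x+h_m$. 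Then apply Cauchy--Schwarz in $y$, using $|G|\le 1$, to bound this by
\begin{equation*}
\Big(\mathbb{E}_{h_m,h_m''}\big|\mathbb{E}_y A(y-h_m)\overline{A(y-h_m'')}\,e\big((\psi(h_m)-\psi(h_m''))y\big)\big|\Big)^{1/2}.
\end{equation*}
The key point is that in $A(y-h_m)\overline{A(y-h_m'')}$ the modulation $e(\alpha\,\cdot)$ collapses to the constant $e(\alpha(h_m''-h_m))$, so $\phi_m$ disappears. After the shift $z=y-h_m$ and the substitution $k:=h_m''-h_m$, the inner expression becomes
\begin{equation*}
\big|\mathbb{E}_z \Delta_{h'}\Delta_{k}f(z)\,e(\Psi z)\big|, \qquad \Psi:=\psi(h_m)-\psi(h_m+k).
\end{equation*}
Here $\Psi$ is a sum over $j<m$ of functions $\phi_j(\dots,h_m,\dots)-\phi_j(\dots,h_m+k,\dots)$. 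For fixed $h_m$, each such term is a function of $(h',k)$ that does not involve $h_j$. So the quantity has exactly the shape of the lemma with $n$ derivatives $(h',k)$ and $m-1$ admissible phases.

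Next average over $h'$. Jensen's inequality moves $\mathbb{E}_{h'}$ inside the square root, and we then fix the auxiliary variable $h_m$. The induction hypothesis bounds the result by $(\mathbb{E}_{h_m}\|f\|_{U^{n+1}}^{2^{n-m+1}})^{1/2}=\|f\|_{U^{n+1}}^{2^{n-m}}$. This completes the induction.

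The main obstacle I expect is the bookkeeping in the inductive step. The removed phase must not depend on the differenced variable $h_m$; this is exactly why we difference in $h_m$. We must also check that the surviving phases $\phi_j(\dots,h_m)-\phi_j(\dots,h_m+k)$, for $j<m$, remain independent of $h_j$, with the extra variable $h_m$ treated as a frozen parameter, so that the inductive hypothesis genuinely applies with the same $n$. Everything else is routine Cauchy--Schwarz and translation invariance in $\mathbb{F}_q$.
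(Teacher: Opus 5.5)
Your proof is correct, and it is essentially the argument of \cite[Lemma 6.4]{PPNonlinearRoth}, which the paper cites rather than reproves: induction on $m$, with one Cauchy--Schwarz step in the variable $h_m$ omitted by $\phi_m$, so that $e(\phi_m(h')\,\cdot)$ collapses to a constant. In $\mathbb{F}_q$ the change of variables in the doubled variable is exact, so your argument even gives the bound with constant $1$. The only slip is cosmetic: with $z=y-h_m$ and $k:=h_m''-h_m$ one gets $G(z)\overline{G(z-k)}=\Delta_{-k}G(z)$, so take $k:=h_m-h_m''$ (and $\Psi=\psi(h_m)-\psi(h_m-k)$) instead, which changes nothing since $k$ is uniformly distributed.
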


\begin{proof}[Proof of Proposition \ref{PropositionFindingPatternsDegree-Lowering}]
First assume $n=2$ which is the easiest case. Our condition reads as
\begin{equation*}
\mathbb{E}_{x_{\leq 3}\in \mathbb{F}_q^3} \|F_2(x_{\leq 3}, \cdot )\|_{U^2}^{4}>\delta.
\end{equation*}
By Lemma \ref{LemmaPopularityPrinciple}, we can find a set $\Upsilon\subset \mathbb{F}_q^3$ with $\#\Upsilon>\frac{\delta}{2}q^3$ such that
\begin{equation*}
\|F_2(x_{\leq 3}, \cdot )\|_{U^2}^{4}>\frac{\delta}{2},\ \forall x_{\leq 3}\in \Upsilon.
\end{equation*}
Recall that the $U^2$ inverse theorem $\|g\|_{U^2}\leq \|\hat{g}\|_{\infty}^{\frac{1}{2}}$ holds for any 1-bounded function $g$. Apply this to each $x_{\leq 3}\in \Upsilon$ and find $\phi_{x_{\leq 3}}\in \mathbb{F}_q$ which maximizes $\|\widehat{F_2(x_{\leq 3}, \cdot )}\|_{\infty}$. We get
\begin{equation} \label{EqFindingPatternsDegree-Lowering1}
|\mathbb{E}_{x_4\in \mathbb{F}_q} F_2(x_{\leq 3}, x_4) e(-\phi_{x_{\leq 3}}\cdot x_4)|>(\frac{\delta}{2})^{\frac{1}{2}},\ \forall x_{\leq 3}\in \Upsilon.
\end{equation}
Summing \eqref{EqFindingPatternsDegree-Lowering1} over $x_{\leq 3}\in \Upsilon$ and expanding the definition of $F_2$, we obtain
\begin{equation*}   
\mathbb{E}_{x_{\leq 3}\in \Upsilon} \big| \mathbb{E}_{x_4\in \mathbb{F}_q} \mathbb{E}_{y\in \mathbb{F}_q^2} f_0\big( x_{\leq 3}, x_4+P(y) \big)f_1 \big( x_{\leq 3}-\Gamma(y), x_4+P(y) \big) e(-\phi_{x_{\leq 3}}\cdot x_4)  \big|\gtrsim \delta^{O(1)}.
\end{equation*}
Let $x_4\mapsto x_4-P(y)$, take $\mathbb{E}_{x_4\in \mathbb{F}_q}$ outside the absolute value, and then pigeonhole on $x_4$. We get one $x_4\in \mathbb{F}_q$ such that
\begin{equation*}
\mathbb{E}_{x_{\leq 3}\in \Upsilon} \big| \mathbb{E}_{y\in \mathbb{F}_q^2} f_1 \big( x_{\leq 3}-\Gamma(y), x_4 \big) e \big( \phi_{x_{\leq 3}}\cdot P(y) \big)  \big|\gtrsim \delta^{O(1)}.
\end{equation*}
We apply Lemma \ref{LemmaFiniteFieldsMajorArc2} to find $\Upsilon_0\subset \Upsilon$ with $\Upsilon_0\gtrsim \delta^{O(1)}q^3$ such that $\phi_{x_{\leq 3}}=0, \forall x_{\leq 3}\in \Upsilon_0$. Now go back to \eqref{EqFindingPatternsDegree-Lowering1}. Summing \eqref{EqFindingPatternsDegree-Lowering1} over $x_{\leq 3}\in \Upsilon_0$ and extending the sum to all $x_{\leq 3}\in \mathbb{F}_q^3$ by positivity, we obtain
\begin{equation*}
\mathbb{E}_{x_{\leq 3}\in \mathbb{F}_q^3} |\mathbb{E}_{x_4\in \mathbb{F}_q} F_2(x_{\leq 3}, x_4) \big| \gtrsim \delta^{O(1)}.
\end{equation*}
This gives the conclusion by Cauchy--Schwartz.

Then we handle the $n\geq 3$ case. Write our condition as
\begin{equation*}
\mathbb{E}_{x_{\leq 3}\in \mathbb{F}_q^3} \mathbb{E}_{h\in \mathbb{F}_q^{n-2}}  \|(\Delta_h F_2)(x_{\leq 3}, \cdot )\|_{U^2}^{4}>\delta.
\end{equation*}
By Lemma \ref{LemmaPopularityPrinciple}, we can find a set $\Upsilon\subset \mathbb{F}_q^3\times \mathbb{F}_q^{n-2}$ with $\#\Upsilon>\frac{\delta}{2}q^{n+1}$ such that
\begin{equation*}
\|(\Delta_h F_2)(x_{\leq 3}, \cdot )\|_{U^2}^{4}>\frac{\delta}{2},\ \forall (x_{\leq 3}, h)\in \Upsilon.
\end{equation*}
We shall use the following fibers of $\Upsilon$:
\begin{equation*}
\mathcal{H}(x_{\leq 3}):=\{h\in \mathbb{F}_q^{n-2}: (x_{\leq 3}, h)\in \Upsilon \},\ \ \ X(h):=\{x_{\leq 3}\in \mathbb{F}_q^3: (x_{\leq 3}, h)\in \Upsilon \}.
\end{equation*}
For each $(x_{\leq 3}, h)\in \Upsilon$, apply the $U^2$ inverse theorem and find $\phi_{x_{\leq 3}}(h)\in \mathbb{F}_q$ which maximizes $\|\widehat{(\Delta_h F_2)(x_{\leq 3}, \cdot )}\|_{\infty}$. We get
\begin{equation} \label{EqFindingPatternsDegree-Lowering2}
\big| \mathbb{E}_{x_4\in \mathbb{F}_q} (\Delta_h F_2)(x_{\leq 3}, x_4) e \big( -\phi_{x_{\leq 3}}(h)\cdot x_4 \big) \big| >(\frac{\delta}{2})^{\frac{1}{2}},\ \forall (x_{\leq 3}, h)\in \Upsilon.
\end{equation}
Extend $\phi_{x_{\leq 3}}(h)$ to the whole $\mathbb{F}_q^3\times \mathbb{F}_q^{n-2}$ by arbitrarily defining $\phi_{x_{\leq 3}}(h)$ for $(x_{\leq 3}, h)\notin \Upsilon$. Sum \eqref{EqFindingPatternsDegree-Lowering2} over $(x_{\leq 3}, h)\in \Upsilon$:
\begin{equation*}
\mathbb{E}_{x_{\leq 3}\in \mathbb{F}_q^3}\mathbb{E}_{h\in \mathcal{H}(x_{\leq 3})} \Big( \big| \mathbb{E}_{x_4\in \mathbb{F}_q} (\Delta_h F_2)(x_{\leq 3}, x_4) e \big( -\phi_{x_{\leq 3}}(h)\cdot x_4 \big) \big| \Big)^{2^{n-2}} \gtrsim \delta^{O(1)}.
\end{equation*}
Then applying Lemma \ref{LemmaFindingPatternsDual-DifferenceInterchange} yields
\begin{equation*}
\begin{split}
\mathbb{E}_{x_{\leq 3}\in \mathbb{F}_q^3}\mathbb{E}_{h^{(0)}, h^{(1)}\in \mathcal{H}(x_{\leq 3})} \big| & \mathbb{E}_{x_4\in \mathbb{F}_q} \mathbb{E}_{y\in \mathbb{F}_q^2} (\Delta_{h^{(0)}-h^{(1)}} f_0) \big( x_{\leq 3}, x_4+P(y) \big) (\Delta_{h^{(0)}-h^{(1)}} f_1) \\
& \big( x_{\leq 3}-\Gamma(y), x_4+P(y) \big) e \big( -\Phi_{x_{\leq 3}}(h^{(0)}, h^{(1)})\cdot x_4 \big) \big|\gtrsim \delta^{O(1)}.
\end{split}
\end{equation*}
Control the left hand side by $\mathbb{E}_{h^{(1)}\in \mathbb{F}_q^{n-2}}\mathbb{E}_{x_{\leq 3}\in \mathbb{F}_q^3}\mathbb{E}_{h^{(0)}\in \mathcal{H}(x_{\leq 3})} |\cdots|$. By pigeonholing, we can fix one $h^{(1)}\in \mathbb{F}_q^{n-2}$ such that
\begin{equation*}
\begin{split}
\mathbb{E}_{x_{\leq 3}\in \mathbb{F}_q^3}\mathbb{E}_{h^{(0)}\in \mathcal{H}(x_{\leq 3})} \big| & \mathbb{E}_{x_4\in \mathbb{F}_q} \mathbb{E}_{y\in \mathbb{F}_q^2} (\Delta_{h^{(0)}-h^{(1)}} f_0) \big( x_{\leq 3}, x_4+P(y) \big) (\Delta_{h^{(0)}-h^{(1)}} f_1) \\
& \big( x_{\leq 3}-\Gamma(y), x_4+P(y) \big) e \big( -\Phi_{x_{\leq 3}}(h^{(0)}, h^{(1)})\cdot x_4 \big) \big| \gtrsim \delta^{O(1)}.
\end{split}
\end{equation*}
Let $x_4\mapsto x_4-P(y)$, take $\mathbb{E}_{x_4\in \mathbb{F}_q}$ outside the absolute value, and then pigeonhole on $x_4$. We get one $x_4\in \mathbb{F}_q$ such that
\begin{equation*}
\mathbb{E}_{(x_{\leq 3}, h^{(0)})\in \Upsilon} \big| \mathbb{E}_{y\in \mathbb{F}_q^2} (\Delta_{h^{(0)}-h^{(1)}} f_1)\big( x_{\leq 3}-\Gamma(y), x_4 \big) e \big( \Phi_{x_{\leq 3}}(h^{(0)}, h^{(1)})\cdot P(y) \big)  \big|\gtrsim \delta^{O(1)}.
\end{equation*}
Write $\mathbb{E}_{(x_{\leq 3}, h^{(0)})\in \Upsilon}=\mathbb{E}_{h^{(0)}\in \mathbb{F}_q^{n-2}}\mathbb{E}_{x_{\leq 3}\in X(h^{(0)})}$. Applying Lemma \ref{LemmaPopularityPrinciple} gives $\gtrsim \delta^{O((1)}q^{n-2}$ many $h^{(0)}$ such that
\begin{equation*}
\mathbb{E}_{x_{\leq 3}\in X(h^{(0)})} \big| \mathbb{E}_{y\in \mathbb{F}_q^2} (\Delta_{h^{(0)}-h^{(1)}} f_1)\big( x_{\leq 3}-\Gamma(y), x_4 \big) e \big( \Phi_{x_{\leq 3}}(h^{(0)}, h^{(1)})\cdot P(y) \big) \big|\gtrsim \delta^{O(1)}.
\end{equation*}
We apply Lemma \ref{LemmaFiniteFieldsMajorArc2} to find $X_0(h^{(0)})\subset X(h^{(0)})$ with $X_0(h^{(0)})\gtrsim \delta^{O(1)}q^3$ and $\Phi_{x_{\leq 3}}(h^{(0)}, h^{(1)})=0, \forall x_{\leq 3}\in X_0(h^{(0)})$. Altogether, we get $\gtrsim \delta^{O((1)}q^{n+1}$ many pairs $(x_{\leq 3}, h^{(0)})\in \Upsilon$ such that $\Phi_{x_{\leq 3}}(h^{(0)}, h^{(1)})=0$. Let $\Upsilon'$ denote the set of these pairs. For each $(x_{\leq 3}, h^{(0)})\in \Upsilon'$, expanding the definition of $\Phi_{x_{\leq 3}}(h^{(0)}, h^{(1)})$ yields
\begin{equation}  \label{EqFindingPatternsDegree-Lowering3}
\phi_{x_{\leq 3}}(h^{(0)})=-\sum_{\omega\in \{0,1\}^n\backslash \{(0, \cdots, 0)\}} (-1)^{|\omega|} \phi_{x_{\leq 3}}(h^{(\omega)}),\ \forall (x_{\leq 3}, h^{(0)})\in \Upsilon'. 
\end{equation}
Let $\psi_{x_{\leq 3}}(h^{(0)})$ denote RHS of \eqref{EqFindingPatternsDegree-Lowering3} for any $h^{(0)}\in \mathbb{F}_q^{n-2}$. Recall that $h^{(1)}$ is fixed, so $\psi_{x_{\leq 3}}(h^{(0)})$ has rank lower than $n-2$ in the sense that each summand in RHS of \eqref{EqFindingPatternsDegree-Lowering3} depends on at most $n-3$ coordinates of $h^{(0)}$. Now go back to \eqref{EqFindingPatternsDegree-Lowering2}. Since $\#\Upsilon'\gtrsim \delta^{O(1)}q^{n+1}$, Lemma \ref{LemmaPopularityPrinciple} implies that there are $\gtrsim \delta^{O(1)}q^{3}$ many $x_{\leq 3}$ such that $\mathcal{H}'(x_{\leq 3}):=\{h\in \mathbb{F}_q^{n-2}: (x_{\leq 3}, h)\in \Upsilon'\}$ has cardinality $\gtrsim \delta^{O(1)}q^{n-2}$. For each such $x_{\leq 3}$, sum \eqref{EqFindingPatternsDegree-Lowering2} over $h\in \mathcal{H}'(x_{\leq 3})$ and extend to all $h\in \mathbb{F}_q^{n-2}$ by positivity. We obtain ($\phi_{x_{\leq 3}}(h)=\psi_{x_{\leq 3}}(h)$ for $h\in \mathcal{H}'(x_{\leq 3})$)
\begin{equation*}
\mathbb{E}_{h\in \mathbb{F}_q^{n-2}}\big| \mathbb{E}_{x_4\in \mathbb{F}_q} (\Delta_h F_2)(x_{\leq 3}, x_4) e \big( -\psi_{x_{\leq 3}}(h)\cdot x_4 \big) \big|
\end{equation*}
Applying Lemma \ref{LemmaFindingPatternsLowRankControl} with $m, n$ there both replaced by $n-2$, we get $\|F_2(x_{\leq 3}, \cdot)\|_{U^{n-1}}\gtrsim \delta^{O(1)}$. This holds for $\gtrsim \delta^{O(1)}q^{3}$ many $x_{\leq 3}$. Summing these $x_{\leq 3}$ gives the desired conclusion.
\end{proof}

\subsection{Using information for shorter progressions}
In this subsection, we conclude the proof of Theorem \ref{TheoremFindingPatternsInverse} using the counting results for shorter progressions below:
\begin{lemma}  \label{LemmaFindingPatternsShorterProgressions}
Let $q$ be a prime. Then the following hold
\begin{enumerate}
\item For 1-bounded functions $g_0, g_1: \mathbb{F}_q^3\rightarrow \mathbb{C}$ we have
\begin{equation*}
\mathbb{E}_{x\in \mathbb{F}_q^3, y\in \mathbb{F}_q^2} g_0(x)g_1\big( x-\Gamma(y) \big)=(\mathbb{E}_{x\in \mathbb{F}_q^3}g_0)(\mathbb{E}_{x\in \mathbb{F}_q^3}g_1)+O(q^{-c}).
\end{equation*}
\item For 1-bounded functions $g_0, g_1: \mathbb{F}_q\rightarrow \mathbb{C}$ we have
\begin{equation*}
\mathbb{E}_{x\in \mathbb{F}_q, y\in \mathbb{F}_q^2} g_0(x)g_1\big( x-P(y) \big)=(\mathbb{E}_{x\in \mathbb{F}_q}g_0)(\mathbb{E}_{x\in \mathbb{F}_q}g_1)+O(q^{-c}).
\end{equation*}
\end{enumerate}
\end{lemma}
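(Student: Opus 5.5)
Both parts are proved by Fourier expansion: the mean of $g_1$ is the zero-frequency contribution, and every nonzero frequency is killed by an exponential sum estimate (Lemma \ref{LemmaExponentialSumFiniteFields}). Write $g_1(u)=\sum_{\xi}\hat{g_1}(\xi)e(u\cdot\xi)$. In part (1), the left hand side becomes
\begin{equation*}
\sum_{\xi\in \mathbb{F}_q^3}\hat{g_1}(\xi)\Big(\mathbb{E}_{x}g_0(x)e(x\cdot\xi)\Big)\Big(\mathbb{E}_{y\in\mathbb{F}_q^2}e\big(-\xi\cdot\Gamma(y)\big)\Big).
\end{equation*}
The $\xi=0$ term is exactly $(\mathbb{E}g_0)(\mathbb{E}g_1)$. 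For the other terms, note that the middle factor equals $\hat{g_0}(-\xi)$ (up to conjugation conventions). By Cauchy--Schwarz and Plancherel, $\sum_\xi|\hat{g_1}(\xi)||\hat{g_0}(-\xi)|\le\|g_0\|_2\|g_1\|_2\le 1$. So it suffices to prove
\begin{equation*}
\sup_{\xi\neq 0}\big|\mathbb{E}_{y}e(-\xi\cdot\Gamma(y))\big|\lesssim q^{-c}.
\end{equation*}

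To prove this sup bound, assume $q>2$. The sum factorises as $\prod_{j=1,2}\mathbb{E}_{y_j}e(-\xi_jy_j-\xi_3y_j^2)$. If $\xi_3\neq0$, each factor is a Gauss sum, of size $q^{-1/2}$ by Lemma \ref{LemmaExponentialSumFiniteFields} with $d=2$, so the product is $\lesssim q^{-1}$. If $\xi_3=0$, then some $\xi_j\neq0$ with $j\le2$, and that factor vanishes by orthogonality. This gives $O(q^{-1})$, which is more than enough.

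Part (2) has the same structure in one dimension. The left hand side is
\begin{equation*}
\sum_{\xi\in\mathbb{F}_q}\hat{g_1}(\xi)\hat{g_0}(-\xi)\,\mathbb{E}_{y\in\mathbb{F}_q^2}e\big(-\xi P(y)\big),
\end{equation*}
and again the $\xi=0$ term is the main term. For $\xi\neq0$ the exponential sum factorises as $\big(\mathbb{E}_{t}e(-\xi t^3)\big)^2$. Here $q$ does not divide the leading coefficient, so for $q>3$ Lemma \ref{LemmaExponentialSumFiniteFields} with $d=3$ bounds this by $q^{-2/3}$. Bounded $q$ is trivially absorbed into the implied constant. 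Cauchy--Schwarz and Plancherel then control the off-zero frequencies exactly as in part (1).

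No step is a real obstacle. The only point needing care is the normalisation: $\hat{g}$ is defined with the probability measure on the physical side, so $\mathbb{E}_x g_0(x)e(x\cdot\xi)=\hat{g_0}(-\xi)$ and Plancherel gives $\sum_\xi|\hat{g}(\xi)|^2=\mathbb{E}|g|^2\le1$. One also uses that the $\xi=0$ Fourier coefficient $\hat{g}(0)$ equals the mean $\mathbb{E}g$. This yields the claimed error $O(q^{-c})$ for any $c\le 2/3$.
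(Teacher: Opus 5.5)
Your proposal is correct and follows essentially the same route as the paper: Fourier expansion, isolating the zero frequency as the main term, Cauchy--Schwarz plus Plancherel for the nonzero frequencies, and Lemma \ref{LemmaExponentialSumFiniteFields} for the exponential sums. You also make explicit two details the paper leaves implicit: the factorisation of the two-variable sum, and the degenerate case $\xi_3=0$, which orthogonality handles. In addition, your bound $\|g_0\|_2\|g_1\|_2$ corrects the paper's typo $\|g_0\|_2\cdot\|g_0\|_1$.
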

\begin{proof}
The two results are proved in the same way, so we only prove the first. Write
\begin{equation*}
\begin{split}
\mathbb{E}_{x\in \mathbb{F}_q^3, y\in \mathbb{F}_q^2} g_0(x)g_1\big( x-\Gamma(y) \big)&=\mathbb{E}_{x\in \mathbb{F}_q^3, y\in \mathbb{F}_q^2} \sum_{\xi, \tau\in \mathbb{F}_q^3}  e(\xi\cdot x) \widehat{g_0}(\xi) e \big( \tau\cdot (x-\Gamma(y) \big) \widehat{g_1}(\tau) \\
&=\sum_{\xi\in \mathbb{F}_q^3} \widehat{g_0}(\xi) \widehat{g_1}(-\xi) \Big( \mathbb{E}_{y\in \mathbb{F}_q^2} e\big( \xi\cdot \Gamma(y) \big) \Big) \\
&=\widehat{g_0}(0) \widehat{g_1}(0)+O \Big( \sup_{\xi\neq 0} \big| \mathbb{E}_{y\in \mathbb{F}_q^2} e \big( \xi\cdot \Gamma(y) \big) \big|\cdot \|g_0\|_2\cdot \|g_0\|_1 \Big),
\end{split}
\end{equation*}
where we used Cauchy--Schwartz and Plancherel in the last equation. The first term on RHS is exactly $(\mathbb{E}_{x\in \mathbb{F}_q^3}g_0)(\mathbb{E}_{x\in \mathbb{F}_q^3}g_1)$, and the second term is $O(q^{-c})$ by Lemma \ref{LemmaExponentialSumFiniteFields}.
\end{proof}

To show \eqref{EqFindingPatternsInverse1}, we first apply Proposition \ref{PropositionFindingPatternsGowersNormControlForDualFunction}. Note that the constant $C_1$ in Theorem \ref{TheoremFindingPatternsInverse} is chosen to be sufficiently large depending on $n_0$ and $C(n_0), C(n_0-1), \cdots, C(2)$ in Proposition \ref{PropositionFindingPatternsDegree-Lowering}. Repeatedly using Proposition \ref{PropositionFindingPatternsDegree-Lowering} for $n=n_0, n_0-1, \cdots, 2$ yields
\begin{equation*}
\mathbb{E}_{x_{\leq 3}\in \mathbb{F}_q^3} |\mathbb{E}_{x_4\in \mathbb{F}_q} F_2(x_{\leq 3}, x_4)|^2\gtrsim \delta^{O(1)}.
\end{equation*}
We can drop the square as the summands are 1-bounded. Expand the definition of $F_2$. Let $x_4\mapsto x_4-P(y)$, and remove the absolute value by absorbing the sign into $f_0$. We obtain
\begin{equation*}
\big| \mathbb{E}_{x_{\leq 3}\in \mathbb{F}_q^3} \mathbb{E}_{x_4\in \mathbb{F}_q} \mathbb{E}_{y\in \mathbb{F}_q^2} g_0(x_{\leq 3}, x_4)f_1(x_{\leq 3}-\Gamma(y), x_4) \big| \gtrsim \delta^{O(1)}
\end{equation*}
for some 1-bounded function $g_0$. This reads as
\begin{equation*}
\mathbb{E}_{x_4\in \mathbb{F}_q} \big| \mathbb{E}_{x_{\leq 3}} \mathbb{E}_{y\in \mathbb{F}_q^2} g_0(x_{\leq 3}, x_4)f_1(x_{\leq 3}-\Gamma(y), x_4) \big|\gtrsim \delta^{O(1)}.
\end{equation*}
For each $x_4\in \mathbb{F}_q$, apply the fist conclusion of Lemma \ref{LemmaFindingPatternsShorterProgressions} and use $|\mathbb{E}_{x_{\leq 3}} g_0(\cdot, x_4)|\leq 1$. We get \eqref{EqFindingPatternsInverse1} as desired.

To show \eqref{EqFindingPatternsInverse2}, we pass the the dual function of $f_1$:
\begin{equation*}
F_1(x):=\mathbb{E}_{y\in \mathbb{F}_q^2} f_0\big( x_{\leq 3}+\Gamma (y), x_4\big)f_2 \big( x_{\leq 3}+\Gamma(y), x_4-P(y) \big).
\end{equation*}
We have
\begin{equation*}
\Lambda(f_0, f_1, f_2)=\langle f_1, F_1 \rangle,\ \forall f_1.
\end{equation*}
A similar argument as in Subsection \ref{SubsectionPassingToTheDualFunction} then gives
\begin{equation*}
|\Lambda(f_0, \overline{F_1}, f_2)|>\delta^2.
\end{equation*}
Applying \eqref{EqFindingPatternsInverse1} with functions $f_0, \overline{F_1}, f_2$ yields
\begin{equation*}
\mathbb{E}_{x_4\in \mathbb{F}_q} \big| \mathbb{E}_{x_{\leq 3}} \mathbb{E}_{y\in \mathbb{F}_q^2} f_0\big( x_{\leq 3}+\Gamma (y), x_4\big)f_2 \big( x_{\leq 3}+\Gamma(y), x_4-P(y) \big) \big|\gtrsim \delta^{O(1)}.
\end{equation*}
Let $x_{\leq}\mapsto x_{\leq 3}-\Gamma(y)$, remove the absolute value by absorbing the sign into $f_0$, and apply the second conclusion of Lemma \ref{LemmaFindingPatternsShorterProgressions} fiberwisely. We get \eqref{EqFindingPatternsInverse2} and complete the proof of Theorem \ref{TheoremFindingPatternsInverse}.

\section{A Pierce--Yung Operator Revisited} \label{SectionPY}
Here we give a new proof of Theorem \ref{TheoremPY} using Lemma \ref{LemmaContinuousMajorArc}.

Let $d\in \mathbb{N}_+$. Fix $\phi\in \mathcal{S}(\mathbb{R}^d)$ such that $\overline{B(0, 1)}\prec \phi \prec B(0, 2)$ and $\psi(\cdot):=\phi(\cdot)-\phi(2\cdot)$. Define $\phi_s:=\phi(\frac{\cdot}{2^s}), \psi_s:=\psi(\frac{\cdot}{2^s})$ for $s\in \mathbb{Z}$. Hence $\psi_s$ is supported in the annulus $\{y: \frac{1}{2}\cdot 2^s<|y|<2\cdot 2^s\}$, and we have
\begin{equation*}
\sum_{s\in \mathbb{Z}} \psi_{s}(y)=1, \ \ \ \forall y\neq 0.
\end{equation*}
These are standard Littlewood--Paley projections. We shall use them in the physical space with $d=2$ and in the frequency space with $d=2, 1$.

By the standard linearizing argument, it suffices to show the following operator is $L^p$ bounded for any fixed measurable function $\tau_{(\cdot)}: \mathbb{R}^3\rightarrow \mathbb{R}$:
\begin{equation*}
\int_{\mathbb{R}^2} f \big( x-\Gamma(y) \big) e^{2\pi i \tau_x P(y)} K(y) dy.
\end{equation*}
We decompose
\begin{equation*}
\int_{\mathbb{R}^2} f \big( x-\Gamma(y) \big) e^{2\pi i \tau_x P(y)} K(y)=\sum_{s\in \mathbb{Z}} \int_{\mathbb{R}^2} f \big( x-\Gamma(y) \big) e^{2\pi i \tau_x P(y)} \psi_s(y)K(y).
\end{equation*}
Let $K_s(y):=\psi_s(y)K(y)$. By approximation, it suffices to prove $L^p$ boundedness of the following operator uniformly in $N$:
\begin{equation*}
\sum_{|s|\leq N} \int_{\mathbb{R}^2} f \big( x-\Gamma(y) \big) e^{2\pi i \tau_x P(y)} K_s(y).
\end{equation*}
Fix $N$ from now on. We can also assume $\hat{f}$ is supported in a compact set in $\mathbb{R}^3\backslash \{0\}$. (These reductions are not strictly necessary, but they make the later argument cleaner.) We will use boundedness of Radon transforms.
\begin{theorem}[{\cite[Appendix A]{MSTDiscreteRadon1}}] \label{TheoremRadonTransforms}
Define the maximal Radon transforms $M^{\Gamma}$ and $M^{\Gamma, K}$ by
\begin{align*}
M^{\Gamma}f(x) & :=\sup_{R>0} \frac{1}{R^2}\int_{|y|\leq R} \big| f \big( x-\Gamma(y) \big) \big| dy, \\
M^{\Gamma, K}f(x) & :=\sup_{R>0} \big| \int_{|y|\leq R} f \big( x-\Gamma(y) \big) K(y) dy \big|,
\end{align*}
respectively. Then for $1<p<\infty$ and $f\in \mathcal{S}(\mathbb{R}^3)$ we have
\begin{equation*}
\|M^{\Gamma}f\|_p\lesssim \|f\|_p,\ \ \ \|M^{\Gamma, K}f\|_p\lesssim \|f\|_p.
\end{equation*}
\end{theorem}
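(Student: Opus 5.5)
The plan is the Duoandikoetxea--Rubio de Francia scheme for maximal operators built from dyadic pieces with Fourier decay, run with the parabolic dilations $\delta_s(x)=(2^s x_{\le 2}, 2^{2s}x_3)$. These dilations preserve $\Gamma$, since $\Gamma(2^s y)=\delta_s\Gamma(y)$.

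\textbf{The average $M^{\Gamma}$.} Fix $0\le\phi_0\in C_c^\infty(\mathbb{R}^2)$ with $\phi_0>0$ on $\{|y|\le 1\}$. Let $\mu_s$ be the measure $f\mapsto \int f(x-\Gamma(y))\,2^{-2s}\phi_0(2^{-s}y)\,dy$. Then $M^{\Gamma}f\lesssim \sup_{s\in\mathbb{Z}}\mu_s*|f|$, so it suffices to bound $\sup_s|\mu_s*f|$.

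Let $\Phi_s$ be a smooth $L^1$-normalized bump adapted to $\delta_s$, with $\widehat{\Phi_s}(0)=\widehat{\mu_s}(0)$. Then $\sup_s|\Phi_s*f|$ is dominated by the parabolic Hardy--Littlewood maximal function, which is bounded on $L^p$ for $1<p\le\infty$. Set $\nu_s:=\mu_s-\Phi_s$. The key Fourier input is
\begin{equation*}
|\widehat{\nu_s}(\xi)|\lesssim \min\{|\delta_s^*\xi|,\ |\delta_s^*\xi|^{-1/2}\}.
\end{equation*}
\begin{itemize}
\item The first bound holds because $\widehat{\nu_s}(0)=0$ and $\nu_s$ has bounded first moments at scale $\delta_s$.
\item The second bound comes from stationary phase, or van der Corput, applied to $\int e^{-2\pi i(\eta_{\le2}\cdot y+\eta_3|y|^2)}\phi_0(y)\,dy$. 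The paraboloid has nonvanishing curvature, and in the region $|\eta_{\le2}|\gg|\eta_3|$ one instead integrates by parts in $y$.
\end{itemize}

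Insert a parabolic Littlewood--Paley decomposition $f=\sum_j P_jf$ with $\widehat{P_j}$ supported where $|\delta_j^*\xi|\sim 1$. Then
\begin{equation*}
\sup_s|\nu_s*f|\le \sum_{k}\Big(\sum_s|\nu_s*P_{s+k}f|^2\Big)^{1/2}.
\end{equation*}
By Plancherel and the decay bound, the $k$-th term has $L^2$ norm $\lesssim 2^{-|k|/2}\|f\|_2$.

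For $L^p$ the crude bound $\sup_s|\nu_s*g|\le \sup_s\mu_s*|g|+C\,M g$ is available, and it gives a bootstrap. One proves a vector-valued estimate
\begin{equation*}
\Big\|\Big(\sum_s|\mu_s*g_s|^2\Big)^{1/2}\Big\|_p\lesssim \|M^{\Gamma}\|_{p_0\to p_0}^{1/2}\,\Big\|\Big(\sum_s|g_s|^2\Big)^{1/2}\Big\|_p,
\end{equation*}
in the standard way. Combining it with the Littlewood--Paley inequality gives an $O(1)$ bound for the $k$-th term, uniform in $k$. Interpolating with the $L^2$ decay $2^{-|k|/2}$ makes the sum over $k$ converge for $p$ in a neighborhood of $2$.

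Iterating this argument (the a priori finiteness comes from working with finitely many scales) extends the range to all $p>1$. This bootstrap, together with the $\xi_3$-dominant versus $\xi_{\le2}$-dominant case split in the decay estimate, is the main technical step.

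\textbf{The singular maximal operator $M^{\Gamma,K}$.} Write $K=\sum_s K_s$ with $K_s=\psi_sK$, and let $\sigma_s$ be the measure $f\mapsto\int f(x-\Gamma(y))K_s(y)\,dy$. For $2^{N}\le R<2^{N+1}$, the truncation $\int_{|y|\le R}$ differs from $\sum_{s\le N}\sigma_s*f$ by an integral over the region $|y|\sim 2^N$. That error is bounded pointwise by $C M^{\Gamma}f$, since $|K(y)|\lesssim |y|^{-2}$ there. It therefore suffices to bound $\sup_N|\sum_{s\le N}\sigma_s*f|$.

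Since $\int_{\mathbb{S}^1}\Omega=0$, we have $\widehat{\sigma_s}(0)=0$, and the same two decay bounds as above hold for $\widehat{\sigma_s}$. Decompose
\begin{equation*}
\sum_{s\le N}\sigma_s*f=\Phi_N*\Big(\sum_s\sigma_s*f\Big)+\sum_s(\delta_{s\le N}\sigma_s-\Phi_N*\sigma_s)*f.
\end{equation*}
\begin{itemize}
\item The first term is bounded by the parabolic maximal function of $T_1f$. The $L^p$ boundedness of the singular Radon transform $T_1$ is classical and follows from the same Fourier decay via Littlewood--Paley.
\item In the second term, the $(s,N)$ summand has Fourier multiplier of size $\lesssim 2^{-c|s-N|}$ on the support of $\widehat{P_{N+k}}$. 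The square-function, bootstrap and interpolation argument from the first part then closes the estimate as before.
\end{itemize}
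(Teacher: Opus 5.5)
The paper gives no proof of this statement; it quotes it from \cite[Appendix A]{MSTDiscreteRadon1}. So the only question is whether your argument stands on its own. Your treatment of $M^{\Gamma}$ is the standard Duoandikoetxea--Rubio de Francia argument and is fine.

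\textbf{The gap.} It is in the singular part. Your decomposition
\begin{equation*}
\sum_{s\le N}\sigma_s*f=\Phi_N*T_1f+\sum_s\big(1_{s\le N}\sigma_s-\Phi_N*\sigma_s\big)*f
\end{equation*}
uses the wrong main term. Write $\widehat{\Phi_N}(\xi)=\widehat{\Phi}(\delta_N^*\xi)$ with $\widehat{\Phi}(0)=1$, and $\widehat{\sigma_s}(\xi)=\widehat{\sigma_0}(\delta_s^*\xi)$. Keeping only the scales $s\le N$ acts as a \emph{high}-pass filter at scale $N$. For $|\delta_N^*\xi|\gg1$ the multiplier $\sum_{s\le N}\widehat{\sigma_s}(\xi)$ is close to $\widehat{T_1}(\xi)$, and for $|\delta_N^*\xi|\ll1$ it is close to $0$.

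Your error pieces are $\sigma_s-\Phi_N*\sigma_s$ for $s\le N$ and $-\Phi_N*\sigma_s$ for $s>N$. Their multipliers are $(1-\widehat{\Phi}(\delta_N^*\xi))\widehat{\sigma_0}(\delta_s^*\xi)$ and $-\widehat{\Phi}(\delta_N^*\xi)\widehat{\sigma_0}(\delta_s^*\xi)$. Fix $\eta_0$ with $|\eta_0|\sim1$ and $\widehat{\sigma_0}(\eta_0)\neq0$, and take $\xi$ with $\delta_s^*\xi=\eta_0$; this $\xi$ lies in $\operatorname{supp}\widehat{P_{N+k}}$ for $k\approx s-N$.
\begin{itemize}
\item As $s-N\to+\infty$, $\widehat{\Phi}(\delta_N^*\xi)\to1$.
\item As $s-N\to-\infty$, $1-\widehat{\Phi}(\delta_N^*\xi)\to1$.
\end{itemize}
So both multipliers stay of size $|\widehat{\sigma_0}(\eta_0)|$, and your claimed bound $2^{-c|s-N|}$ is false. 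Worse, at low frequencies the $s>N$ part of your error term is essentially $-\sum_{s>N}\sigma_s*f$. Its maximal function is the complementary truncation, which is as hard as the original operator, so the reduction is circular.

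\textbf{The fix.} Pair $\Phi_N$ with the fine scales and $1-\widehat{\Phi_N}$ with the coarse ones:
\begin{equation*}
\sum_{s\le N}\sigma_s*f=\big(T_1f-\Phi_N*T_1f\big)+\sum_{s\le N}\Phi_N*\sigma_s*f-\sum_{s>N}\big(\sigma_s-\Phi_N*\sigma_s\big)*f.
\end{equation*}
Equivalently, apply your formula to $\sum_{s>N}\sigma_s=T_1-\sum_{s\le N}\sigma_s$. The main term is bounded by $|T_1f|$ plus the parabolic Hardy--Littlewood maximal function of $T_1f$. The error pieces are now small:
\begin{itemize}
\item For $s=N-l$, the factor $|\widehat{\Phi}(\delta_N^*\xi)\widehat{\sigma_0}(\delta_s^*\xi)|$ is small by the cancellation $\widehat{\sigma_0}(0)=0$ when $|\delta_N^*\xi|\le 2^{l/2}$, and by the rapid decay of $\widehat{\Phi}$ otherwise.
\item For $s=N+l$, the factor $|(1-\widehat{\Phi}(\delta_N^*\xi))\widehat{\sigma_0}(\delta_s^*\xi)|$ is small because $1-\widehat{\Phi}(0)=0$ when $|\delta_N^*\xi|\le2^{-l/2}$. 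Otherwise $|\delta_s^*\xi|\ge2^{l/2}$ and the decay of $\widehat{\sigma_0}$ applies.
\end{itemize}
On $\operatorname{supp}\widehat{P_{N+k}}$ this gives the bound $2^{-c(|s-N|+|k|)}$. You need this \emph{joint} decay, not a bound $2^{-c|s-N|}$ uniform in $k$, to sum over both indices after interpolating with the uniform $L^p$ square-function bounds. Those bounds follow from your first part, since $\sup_N|\sigma_N|*|g|\lesssim M^{\Gamma}g$.

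Also choose $\psi$ radial: for the paper's possibly non-radial $\psi$, $\int K_0$ need not vanish, and you rely on $\widehat{\sigma_0}(0)=0$.
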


As in \cite{SWPolynomialCarleson, PYPolynomialCarlesonRadon}, we first decompose our operator into oscillatory and non-oscillatory parts:
\begin{equation}  \label{EqPYOscillatoryAndNon-Oscillatory}
\sum_{|s|\leq N} \int_{\mathbb{R}^2} f \big( x-\Gamma(y) \big) e^{2\pi i \tau_x P(y)} K_s(y)=\sum_{|s|\leq N} 1_{|\tau_x|>2^{-3s}}\int_{\mathbb{R}^2} \cdots+\sum_{|s|\leq N} 1_{|\tau_x|\leq 2^{-3s}}\int_{\mathbb{R}^2} \cdots.
\end{equation}
For the second part (i.e. non-oscillatory part) in RHS of \eqref{EqPYOscillatoryAndNon-Oscillatory}, we use
\begin{equation*}
\begin{split}
\big| \int_{\mathbb{R}^2} f \big( x-\Gamma(y) \big) e^{2\pi i \tau_x P(y)} K_s(y)-\int_{\mathbb{R}^2} f \big( x-\Gamma(y) \big) K_s(y) \big| & \lesssim O(|\tau_x| 2^{3s})\int_{|y|\lesssim 2^s} |f \big( x-\Gamma(y) \big)| \frac{1}{2^{2s}} \\
&\lesssim O(|\tau_x| 2^{3s}) M^{\Gamma}f(x),
\end{split}
\end{equation*}
and
\begin{equation*}
\big| \sum_{s_1\leq s\leq s_2}\int_{\mathbb{R}^2} f \big( x-\Gamma(y) \big) K_s(y)-\int_{2^{s_1}<|y|\leq 2^{s_2}} f \big( x-\Gamma(y) \big) K(y) \big|\lesssim M^{\Gamma}f(x),
\end{equation*}
to deduce that
\begin{equation*}
\begin{split}
|\text{second part of \eqref{EqPYOscillatoryAndNon-Oscillatory}}| & \leq \big| \sum_{|s|\leq N: |\tau_x|\leq 2^{-3s}} \int_{\mathbb{R}^2} f \big( x-\Gamma(y) \big) K_s(y) \big|+\sum_{|s|\leq N: |\tau_x|\leq 2^{-3s}} O(|\tau_x| 2^{3s}) M^{\Gamma}f(x)  \\
& \lesssim \big( M^{\Gamma, K}f(x)+M^{\Gamma}f(x) \big)+M^{\Gamma}f(x)\lesssim M^{\Gamma, K}f(x)+M^{\Gamma}f(x).
\end{split}
\end{equation*}
Thus $\|\text{second part of \eqref{EqPYOscillatoryAndNon-Oscillatory}}\|_p\lesssim \|f\|_p$ by Theorem \ref{TheoremRadonTransforms}.

For the first part (i.e. oscillatory part) in RHS of \eqref{EqPYOscillatoryAndNon-Oscillatory}, we further decompose
\begin{equation*}
\begin{split}
\text{first part of \eqref{EqPYOscillatoryAndNon-Oscillatory}} & =\sum_{|s|\leq N} \sum_{l\geq 1} 1_{|\tau_x|\sim 2^{-3s+l}}\int_{\mathbb{R}^2} f \big( x-\Gamma(y) \big) e^{2\pi i \tau_x P(y)} K_s(y) \\
&=\sum_{l\geq 1} \Big( \sum_{|s|\leq N} 1_{|\tau_x|\sim 2^{-3s+l}}\int_{\mathbb{R}^2} f \big( x-\Gamma(y) \big) e^{2\pi i \tau_x P(y)} K_s(y) \Big) \\
&=:\sum_{l\geq 1} T_l f(x)
\end{split}
\end{equation*}
Then we see Theorem \ref{TheoremPY} is implied by Proposition \ref{PropositionPYMaximalMajorArcWeaker} below.
\begin{proposition}  \label{PropositionPYMaximalMajorArcWeaker}
For $1<p<\infty$ and $l\geq 1$, we have
\begin{equation*}
\|T_l f\|_p\lesssim 2^{-c(p)l}\|f\|_p.
\end{equation*}
\end{proposition}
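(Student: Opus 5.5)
We will obtain Proposition \ref{PropositionPYMaximalMajorArcWeaker} by interpolating an $L^2$ bound with exponential decay against an $L^p$ bound with at most polynomial growth in $l$, for $p$ in a slightly larger range.

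\textbf{Trivial bound.} First we show $\|T_lf\|_p\lesssim_p l^{O(1)}\|f\|_p$, and in fact $\|T_l f\|_p\lesssim \|f\|_p$. Since $|K_s(y)|\lesssim 2^{-2s}1_{|y|\sim 2^s}$,
\begin{equation*}
|T_lf(x)|\le \sum_{s:\,|\tau_x|\sim 2^{-3s+l}}\int |f(x-\Gamma(y))||K_s(y)|\,dy\lesssim M^{\Gamma}f(x),
\end{equation*}
because for fixed $x$ and $l$ the condition $|\tau_x|\sim 2^{-3s+l}$ selects at most one $s$. Theorem \ref{TheoremRadonTransforms} then gives the bound uniformly in $l$, for all $1<p<\infty$.

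\textbf{$L^2$ decay.} Next we show $\|T_l f\|_2\lesssim 2^{-cl}\|f\|_2$. For each $x$ there is at most one active scale, $s(x)$. So $T_l f(x)=A_{s(x)}f(x)$, where $A_s$ is the single-scale operator with phase $\tau_x P(y)$ and $|\tau_x|2^{3s}\sim 2^l$. Partition $\mathbb{R}^3=\bigsqcup_s E_s$ with $E_s:=\{x:|\tau_x|\sim 2^{-3s+l}\}$. Then
\begin{equation*}
\|T_lf\|_2^2=\sum_s\|1_{E_s}A_sf\|_2^2.
\end{equation*}
Rescaling by the anisotropic dilation $(x_1,x_2,x_3)\mapsto(2^{-s}x_1,2^{-s}x_2,2^{-2s}x_3)$ turns $1_{E_s}A_s$ into an operator of the form in Lemma \ref{LemmaContinuousMajorArc}. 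The kernel becomes $\rho=2^{2s}K_s(2^s\cdot)$, which is supported in $\{|y|<5\}$ with bounded $C^1$ norm. The phase becomes $\tau'_x P(y)$ with $|\tau'_x|\sim 2^l$; off $E_s$ we may set $\tau'_x$ to any value of size $2^l$, which only enlarges the norm. Lemma \ref{LemmaContinuousMajorArc} therefore gives $\|1_{E_s}A_sf\|_2\lesssim 2^{-cl}\|f\|_2$. This alone does not sum over $s$.

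\textbf{Main obstacle: almost orthogonality in $s$.} The step I expect to be hardest is proving
\begin{equation*}
\sum_s\|1_{E_s}A_sf\|_2^2\lesssim 2^{-2cl}\|f\|_2^2.
\end{equation*}
Since $E_s$ is arbitrary, we must use frequency localization. We insert Littlewood--Paley pieces in the anisotropic sense, writing $f=\sum_k P_k f$ with $P_k$ adapted to the dilations above. Then we split $A_s P_k$ into two regimes.
\begin{itemize}
\item For $|k-s|\le \epsilon l$ we apply the rescaled Lemma \ref{LemmaContinuousMajorArc} to $P_kf$. Summing over these pairs costs a factor $\epsilon l$, and orthogonality of the $P_k f$ gives $\lesssim \epsilon l\, 2^{-2cl}\sum_k\|P_kf\|_2^2$.
\item For $|k-s|>\epsilon l$ we show $\|A_sP_k\|_{2\to2}\lesssim 2^{-c'|k-s|}$, uniformly in the choice of $\tau$ with $|\tau|2^{3s}\lesssim 2^l$, as long as $|k-s|\gg l$. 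When the frequency is small relative to scale $s$, we use cancellation $\int K_s=0$ after subtracting the modulation, paying a loss $2^{l}$; when it is large, the curvature of $\Gamma$ gives decay $2^{-c(k-s)}$. Both are handled by $TT^*$ on the Fourier side as in Lemma \ref{LemmaContinuousMajorArc}. Choosing the threshold $|k-s|>Cl$ instead of $\epsilon l$ absorbs the $2^l$ loss, and the sum over $|k-s|\le Cl$ still costs only a factor $l$.
\end{itemize}
Altogether $\|T_lf\|_2\lesssim l\,2^{-cl}\|f\|_2\lesssim 2^{-cl/2}\|f\|_2$.

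\textbf{Interpolation.} Finally we interpolate. Riesz--Thorin between the $L^2$ bound $2^{-cl/2}$ and the uniform $L^{p_0}$ bound, with $p_0$ close to $1$ or to $\infty$, gives $\|T_lf\|_p\lesssim 2^{-c(p)l}\|f\|_p$ for every $1<p<\infty$. This is legitimate because $T_l$ is a linear operator for the fixed linearizing function $\tau_{(\cdot)}$.
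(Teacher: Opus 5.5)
Most of your plan works: the pointwise bound by $M^{\Gamma}$, the reduction to $L^2$ by interpolation, the identity $\|T_lf\|_2^2=\sum_s\|1_{E_s}A_sf\|_2^2$, and the rescaled Lemma \ref{LemmaContinuousMajorArc} on the window $|k-s|\le Cl$ at a cost of a factor $l$. The gap is the off-diagonal claim $\|A_sP_k\|_{2\to2}\lesssim 2^{-c'|k-s|}$ when the frequency of $P_kf$ lies far \emph{below} scale $s$. There the claim is false.

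In that regime $P_kf$ is essentially constant on the support of $K_s$, so $A_sP_kf(x)\approx m(\tau_x2^{3s})\,P_kf(x)$, where $m(\lambda):=\int e^{2\pi i\lambda P(y)}K_0(y)\,dy$. The cancellation $\int K_s=0$ only removes the unmodulated part. The piece you subtract, $\int(e^{2\pi i\tau_xP(y)}-1)K_s(y)\,dy=m(\tau_x2^{3s})$, does not depend on $k$ at all. Concretely, take $\tau_x\equiv\tau$ constant. Then $A_s$ is a Fourier multiplier whose symbol tends to $m(\tau2^{3s})$ as $\xi\to0$, so $\|A_sP_k\|_{2\to2}$ does not tend to $0$ as $k-s\to\infty$. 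It is $O_N(2^{-Nl})$, because $|\nabla P|\gtrsim1$ on $|y|\sim1$, but it has no decay in $k-s$. With operator-norm bounds of this kind, which ignore the sets $E_s$, the sum over the unboundedly many scales diverges. A single band $P_{k_0}f$ counts as low frequency for every scale on one side of $k_0$, and each such scale contributes a non-decaying amount.

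The missing idea is to handle all low frequencies at once, using the disjointness of the $E_s$ together with a maximal function instead of orthogonality in $k$. At each scale, take the low-pass piece with rescaled frequencies $|\eta|\ll 2^l$. Non-stationary phase, $|\nabla_y(\lambda P(y)-\eta\cdot\Gamma(y))|\gtrsim 2^l$ on $|y|\sim1$, bounds its kernel by $2^{-cl}$ times an $L^1$-normalized anisotropic bump. Hence $|A_s\Phi_sf(x)|\lesssim 2^{-cl}M^{\mathrm{str}}f(x)$ uniformly in $s$, and $\sum_s\|1_{E_s}2^{-cl}M^{\mathrm{str}}f\|_2^2=2^{-2cl}\|M^{\mathrm{str}}f\|_2^2$. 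This is exactly the paper's treatment of $\Upsilon_{<}$.

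On the high-frequency side your claim is true, but ``$TT^*$ as in the Lemma'' is only a sketch. With measurable $\tau_x$ you need an actual argument, for instance:
\begin{itemize}
\item stationary phase giving the bound $2^{-|k-s|}$ for the symbol and for its $\tau$-derivative;
\item a Sobolev embedding in $\tau$ over $|\tau|2^{3s}\sim2^l$, which loses $2^{l/2}$.
\end{itemize}
The paper avoids needing any decay there by decomposing $\xi_{\le2}$ and $\xi_3$ separately. Off-diagonal high frequencies are non-stationary and are bounded pointwise by $M^{\mathrm{str}}$. Diagonal ones, with $|\xi_{\le2}|2^s\sim|\xi_3|2^{2s}$, need only the $2^{-cl}$ from Lemma \ref{LemmaContinuousMajorArc}, because each $\xi$ is diagonal for only $O(1)$ values of $s$.
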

\begin{remark}
We will use the fact that $K_s$ is supported in $\{y: \frac{1}{2}\cdot 2^s<|y|<2\cdot 2^s\}$, which makes Proposition \ref{PropositionPYMaximalMajorArcWeaker} weaker than Lemma \ref{LemmaMaximalMajorArc}.
\end{remark}
\begin{proof}
Note that we have
\begin{equation*}
|T_l f(x)| \lesssim M^{\Gamma}f(x),
\end{equation*}
so $\|T_l f\|_p\lesssim \|f\|_p$ for $1<p<\infty$. By interpolation, it suffices to show $\|T_l f\|_2\lesssim 2^{-cl}\|f\|_2$. We can assume $l\gtrsim 1$.

The argument is based on a two-step paraproduct decomposition. For each $|s|\leq N$, we decompose
\begin{equation*}
1_{\xi\neq 0}=\sum_{k, k'\in \mathbb{Z}^2} \psi_{-s+k}(\xi_{\leq 2})\psi_{-2s+k'} (\xi_3).
\end{equation*}
Here the first $\psi$ is a two-dimensional Littlewood--Paley projection, and the second is one-dimensional. Plug this into $T_l$. We write $T_l f$ as
\begin{equation}  \label{EqPYMainDecomposition}
\sum_{|s|\leq N}\sum_{(k, k')\in \mathbb{Z}^2} 1_{|\tau_x|\sim 2^{-3s+l}}\int_{\mathbb{R}^2} \big( \psi_{-s+k}(D_{\leq 2}) \otimes \psi_{-2s+k'} (D_3)f \big) \big( x-\Gamma(y) \big) e^{2\pi i \tau_x P(y)} K_s(y).
\end{equation}
Note that our assumption on $f$ implies there are only finitely many non-zero summands above, so we can freely reorganize the sum. For any $\Upsilon\subset \mathbb{Z}^2$, we define $T_l^{\Upsilon} f$ to be the operator in \eqref{EqPYMainDecomposition} with $\sum_{(k, k')\in \mathbb{Z}^2}$ replaced by $\sum_{(k, k')\in \Upsilon}$. Fix a large absolute constant $C$. We partition $\mathbb{Z}^2$ into the following four subsets
\begin{align*}
\Upsilon_{<}&:=\big{\{} (k, k')\in \mathbb{Z}^2: \max\{k, k'\}\leq l-C \big{\}}, \\
\Upsilon_{>, \neq}&:=\big{\{} (k, k')\in \mathbb{Z}^2: \max\{k, k'\}\geq l+C, |k-k'|\geq C \big{\}}, \\
\Upsilon_{>, =}&:=\big{\{} (k, k')\in \mathbb{Z}^2: \max\{k, k'\}\geq l+C, |k-k'|<C \big{\}}, \\
\Upsilon_{=}&:=\big{\{} (k, k')\in \mathbb{Z}^2: l-C<\max\{k, k'\}< l+C \big{\}}.
\end{align*}
Decompose $T_l f=T_l^{\Upsilon_{<}}f+T_l^{\Upsilon_{>, \neq}}f+T_l^{\Upsilon_{>, =}}f+T_l^{\Upsilon_{=}}f$ accordingly. We shall prove $\|T_l^{\Upsilon}f\|_2\lesssim 2^{-cl}\|f\|_2$ for $\Upsilon=\Upsilon_{<}, \Upsilon_{>, \neq}, \Upsilon_{>, =}, \Upsilon_{=}$ separately. The off-diagonal parts $T_l^{\Upsilon_{<}}f, T_l^{\Upsilon_{>, \neq}}f$ have rapid decay, and we will control them pointwisely by the strong maximal operator. The two types of diagonal parts $T_l^{\Upsilon_{>, =}}f, T_l^{\Upsilon_{=}}f$ have frequency orthogonality for slightly different reasons, and we will sum them together using the two types of orthogonality.

Now we go to the details. Recall the strong maximal operator defined by
\begin{equation*}
M^{\text{str}} f(x):=\sup_{R_i>0, i\in [3]} \frac{1}{R_1 R_2 R_3}\int_{|y_i|\leq R_i, i\in [3]} |f(x-y)|.
\end{equation*}
One can control $M^{\text{str}} f$ by $(M_1 \circ M_2 \circ M_3)f$, where $M_i$ is the one-dimensional Hardy--Littlewood maximal operator acting on the $i\text{-th}$ coordinate only. Hence $\|M^{\text{str}} f\|_p\lesssim \|f\|_p$ for $1<p<\infty$.

For $T_l^{\Upsilon_{<}}f$, it suffices to show
\begin{equation}  \label{EqPYOperatorPartOne}
|T_l^{\Upsilon_{<}}f(x)|\lesssim 2^{-cl} M^{\text{str}} f(x),\ \forall x\in \mathbb{R}^3.
\end{equation}
Fix $x$ and abbreviate $\tau_x$ as $\tau$. There is at most one $s$ such that $|\tau|\sim 2^{-3s+l}$. Assume such an $s$ exists. Fix it as well. It suffices to show
\begin{equation}  \label{EqPYOperatorPartOne1}
\big| \sum_{(k, k')\in \Upsilon_{<}} \int_{\mathbb{R}^2} \big( \psi_{-s+k}(D_{\leq 2}) \otimes \psi_{-2s+k'} (D_3)f \big) \big( x-\Gamma(y) \big) e^{2\pi i \tau P(y)} K_s(y) \big| \lesssim 2^{-cl} M^{\text{str}} f(x).
\end{equation}
Write
\begin{equation} \label{EqPYOperatorPartOne2}
\text{LHS of \eqref{EqPYOperatorPartOne1}}=\big| \int_{\mathbb{R}^2} \big( \phi_{-s+l-C}(D_{\leq 2}) \otimes \phi_{-2s+l-C} (D_3)f \big) \big( x-\Gamma(y) \big) e^{2\pi i \tau_x P(y)} K_s(y) \big|.
\end{equation}
Note that $\big( \phi_{-s+l-C}(D_{\leq 2})\otimes \phi_{-2s+l-C} (D_3)f \big) \big( x-\Gamma(y) \big)$ is equal to
\begin{equation*}
\begin{split}
\int_{\mathbb{R}^3} f\big( x-\Gamma(y)-t \big)& (\phi_{-s+l-C}\otimes \phi_{-2s+l-C})^{\vee}(t) dt \\
&=\int_{\mathbb{R}^3} f(x-t) (\phi_{-s+l-C}\otimes \phi_{-2s+l-C})^{\vee} \big( t-\Gamma(y) \big) dt.
\end{split}
\end{equation*}
Plug this into RHS of \eqref{EqPYOperatorPartOne2}. We see
\begin{equation} \label{EqPYOperatorPartOne3}
\begin{split}
\text{LHS of \eqref{EqPYOperatorPartOne1}} &=\Big| \int_{\mathbb{R}^3} f(x-t) \Big( \int_{\mathbb{R}^2} e^{2\pi i \tau P(y)}(\phi_{-s+l-C}\otimes \phi_{-2s+l-C})^{\vee} \big( t-\Gamma(y) \big)K_s(y) dy \Big) dt \Big| \\
&=:|\int_{\mathbb{R}^3} f(x-t) H_{s, l}^{\tau}(t) dt|.
\end{split}
\end{equation}
Moreover, changing the variables $y\mapsto 2^s y$ and using the homogeneous property of $K$, we have
\begin{equation*}
H_{s, l}^{\tau}(t)=\int_{\mathbb{R}^2} e^{2\pi i 2^{3s}\tau P(y)}2^{-4s}(\phi_{l-C}\otimes \phi_{l-C})^{\vee} \big( (\frac{t_{\leq 2}}{2^s}, \frac{t_3}{2^{2s}})-\Gamma(y) \big)K_0(y) dy=2^{-4s}H_{0, l}^{2^{3s}\tau}(\frac{t_{\leq 2}}{2^s}, \frac{t_3}{2^{2s}}).
\end{equation*}
Plug this into RHS of \eqref{EqPYOperatorPartOne3}. We see \eqref{EqPYOperatorPartOne1} is implied by the following estimate for $\tau\sim 2^l$:
\begin{equation}  \label{EqPYOperatorPartOne4}
|H_{0, l}^{\tau}(t)|\lesssim 2^{-cl}(1+|t|)^{-20}.
\end{equation}
To prove \eqref{EqPYOperatorPartOne4}, we plug the Fourier inversion formula of $(\phi_{l-C}\otimes \phi_{l-C})^{\vee}$ into $H_{0, l}^{\tau}$:
\begin{equation}  \label{EqPYOperatorPartOne5}
H_{0, l}^{\tau}(t)=\int_{\mathbb{R}^2} e^{2\pi i \tau P(y)} (\phi_{l-C}\otimes \phi_{l-C})^{\vee} \big( t-\Gamma(y) \big)K_0(y)=\int_{\mathbb{R}^3} e^{2\pi i \xi\cdot t} \phi(\frac{\xi_{\leq 2}}{2^{l-C}}) \phi(\frac{\xi_3}{2^{l-C}}) b(\xi) d\xi,
\end{equation}
where
\begin{equation}  \label{EqPYOperatorPartOne6}
b(\xi):=\int_{\mathbb{R}^2} e^{2\pi i \big( \tau P(y)-\xi\cdot \Gamma(y) \big) }K_0(y) dy.
\end{equation}
In the support of $\phi(\frac{\xi_{\leq 2}}{2^{l-C}}) \phi(\frac{\xi_3}{2^{l-C}})$ in RHS of \eqref{EqPYOperatorPartOne5}, we have
\begin{equation}  \label{EqPYOperatorPartOne7}
\big| \nabla_{y} \big( \tau P(y)-\xi\cdot \Gamma(y) \big) \big| \geq c_2|\tau|-O(|\xi|)\gtrsim 2^l,\ \forall |y|\sim 1.
\end{equation}
Thus integration by parts in \eqref{EqPYOperatorPartOne6} gives
\begin{equation*}
|\partial^{\alpha} b(\xi)|\lesssim_{N'} 2^{-lN'},\ \forall N'\in \mathbb{N}_+.
\end{equation*}
Then integration by parts in \eqref{EqPYOperatorPartOne5} gives \eqref{EqPYOperatorPartOne4}, which finishes the proof of \eqref{EqPYOperatorPartOne}.

For $T_l^{\Upsilon_{>, \neq}}f$, we prove the similar estimate
\begin{equation} \label{EqPYOperatorPartTwo}
|T_l^{\Upsilon_{>, \neq}}f(x)|\lesssim 2^{-cl} M^{\text{str}} f(x),\ \forall x\in \mathbb{R}^3.
\end{equation}
Fix $x$, abbreviate $\tau_x$ as $\tau$, and fix $s$ such that $|\tau|\sim 2^{-3s+l}$ as above. Note that
\begin{equation*}
\sum_{(k, k')\in \Upsilon_{>, \neq}} \psi_{-s+k} \otimes \psi_{-2s+k'}=\sum_{k\geq l+C} \psi_{-s+k} \otimes \phi_{-2s+k-C}+\sum_{k'\geq l+C} \phi_{-s+k'-C} \otimes \psi_{-2s+k'},
\end{equation*}
so it suffices to prove the following for $k\geq l+C$ and $k'\geq l+C$:
\begin{align}
\big| \int_{\mathbb{R}^2} \big( \psi_{-s+k}(D_{\leq 2}) \otimes \phi_{-2s+k-C} (D_3)f \big) \big( x-\Gamma(y) \big) e^{2\pi i \tau_x P(y)} K_s(y) \big| &\lesssim 2^{-ck} M^{\text{str}} f(x), \label{EqPYOperatorPartTwo1} \\
\big| \int_{\mathbb{R}^2} \big( \phi_{-s+k'-C}(D_{\leq 2}) \otimes \psi_{-2s+k'} (D_3)f \big) \big( x-\Gamma(y) \big) e^{2\pi i \tau_x P(y)} K_s(y) \big| &\lesssim 2^{-ck'} M^{\text{str}} f(x). \label{EqPYOperatorPartTwo2}
\end{align}
We prove \eqref{EqPYOperatorPartTwo1}\eqref{EqPYOperatorPartTwo2} using the same argument as in the $T_l^{\Upsilon_{<}}f$ part above. The only difference is that, instead of \eqref{EqPYOperatorPartOne7}, we use
\begin{equation*}
\big| \nabla_{y} \big( \tau P(y)-\xi\cdot \Gamma(y) \big) \big| \geq c_2|\xi_{\leq 2}|-O(|\tau|+|\xi_3|)\gtrsim 2^k,\ \forall |y|\sim 1.
\end{equation*}
for \eqref{EqPYOperatorPartTwo1} and
\begin{equation*}
\big| \nabla_{y} \big( \tau P(y)-\xi\cdot \Gamma(y) \big) \big| \geq c_2|\xi_3|-O(|\tau|+|\xi_{\leq 2}|)\gtrsim 2^{k'},\ \forall |y|\sim 1.
\end{equation*}
for \eqref{EqPYOperatorPartTwo2}. This finishes the proof of \eqref{EqPYOperatorPartTwo}.

For $T_l^{\Upsilon_{>, =}}f$, we write
\begin{equation*} 
\int_{\mathbb{R}^3} \big| \sum_{|s|\leq N}\sum_{(k, k')\in \Upsilon_{>, =}} 1_{|\tau_x|\sim 2^{-3s+l}} \int_{\mathbb{R}^2} \big( \psi_{-s+k}(D_{\leq 2}) \otimes \psi_{-2s+k'} (D_3)f \big) \big( x-\Gamma(y) \big) e^{2\pi i \tau_x P(y)} K_s(y) \big|^2
\end{equation*}
as
\begin{equation}\label{EqPYOperatorPartThree}  
\int_{\mathbb{R}^3} \sum_{|s|\leq N} \big| 1_{|\tau_x|\sim 2^{-3s+l}} \int_{\mathbb{R}^2} \big( \sum_{(k, k')\in \Upsilon_{>, =}}  \psi_{-s+k}(D_{\leq 2}) \otimes \psi_{-2s+k'} (D_3)f \big) \big( x-\Gamma(y) \big) e^{2\pi i \tau_x P(y)} K_s(y) \big|^2.
\end{equation}
For each $s$, applying a rescaled version of Lemma \ref{LemmaContinuousMajorArc} gives
\begin{equation*}
\begin{split}
\int_{\mathbb{R}^3} \big| 1_{|\tau_x|\sim 2^{-3s+l}} \int_{\mathbb{R}^2} & \big( \sum_{(k, k')\in \Upsilon_{>, =}}  \psi_{-s+k}(D_{\leq 2}) \otimes \psi_{-2s+k'} (D_3)f \big) \big( x-\Gamma(y) \big) e^{2\pi i \tau_x P(y)} K_s(y) \big|^2 \\
&\lesssim 2^{-2cl} \int_{\mathbb{R}^3} \big| \big( \sum_{(k, k')\in \Upsilon_{>, =}}  \psi_{-s+k}(D_{\leq 2}) \otimes \psi_{-2s+k'} (D_3)f \big)   \big|^2 \\
&\lesssim 2^{-2cl} \int_{\mathbb{R}^3} \sum_{(k, k')\in \Upsilon_{>, =}} \big| \big( \psi_{-s+k}(D_{\leq 2}) \otimes \psi_{-2s+k'} (D_3)f \big) \big|^2,
\end{split}
\end{equation*}
where we used Plancherel and frequency orthogonality in the last equation. Thus
\begin{equation}  \label{EqPYOperatorPartThree1}
\text{RHS of \eqref{EqPYOperatorPartThree}}\lesssim 2^{-2cl} \int_{\mathbb{R}^3} \sum_{|s|\leq N, (k, k')\in \Upsilon_{>, =}} \big| \big( \psi_{-s+k}(D_{\leq 2}) \otimes \psi_{-2s+k'} (D_3)f \big) \big|^2.
\end{equation}
When $(s, k_1, k_2)\in [-N, N]\times \Upsilon_{>, =}$ varies, the Fourier supports of $\psi_{-s+k}\otimes \psi_{-2s+k'}$ are boundedly overlapping because the following system has $O(1)$ solutions for any given $a, a'$:
\begin{equation*}
\begin{cases}
|(-s+k)-a|<100 \\
|(-2s+k')-a'|<100 \\
|k-k'|<C.
\end{cases}
\end{equation*}
Hence using Plancherel and frequency orthogonality yields
\begin{equation*}
\text{RHS of \eqref{EqPYOperatorPartThree1}}\lesssim 2^{-2cl} \int_{\mathbb{R}^3} |f|^2,
\end{equation*}
which means $\|T_l^{\Upsilon_{>, =}}f\|_2\lesssim 2^{-cl}\|f\|_2$ as desired.

For $T_l^{\Upsilon_{=}}f$, use the same argument as in the $T_l^{\Upsilon_{>, =}}f$ part above. We get that
\begin{equation*} 
\int_{\mathbb{R}^3} \big| \sum_{|s|\leq N}\sum_{(k, k')\in \Upsilon_{=}} 1_{|\tau_x|\sim 2^{-3s+l}} \int_{\mathbb{R}^2} \big( \psi_{-s+k}(D_{\leq 2}) \otimes \psi_{-2s+k'} (D_3)f \big) \big( x-\Gamma(y) \big) e^{2\pi i \tau_x P(y)} K_s(y) \big|^2
\end{equation*}
is controlled by
\begin{equation}\label{EqPYOperatorPartFour}  
2^{-2cl} \sum_{|s|\leq N} \int_{\mathbb{R}^3} \big| \big( \sum_{(k, k')\in \Upsilon_=}  \psi_{-s+k}(D_{\leq 2}) \otimes \psi_{-2s+k'} (D_3)f \big) \big|^2.
\end{equation}
The Fourier support of $\sum_{(k, k')\in \Upsilon_=}  \psi_{-s+k}\otimes \psi_{-2s+k'}$ is contained in
\begin{equation}  \label{EqPYOperatorPartFour1}
\{\xi: \frac{1}{100}\cdot 2^{-s+l-C} \leq |\xi_{\leq 2}|\leq 100\cdot 2^{-s+l+C}\}\cup \{\xi: \frac{1}{100}\cdot 2^{-2s+l-C} \leq |\xi_3|\leq 100\cdot 2^{-2s+l+C}\}.
\end{equation}
When $s$ varies, each strip in \eqref{EqPYOperatorPartFour1} varies in a boundedly overlapping manner. Hence the Fourier supports of $\sum_{(k, k')\in \Upsilon_=}  \psi_{-s+k}\otimes \psi_{-2s+k'}$ are boundedly overlapping. From Plancherel and frequency orthogonality, we deduce
\begin{equation*}
\text{RHS of \eqref{EqPYOperatorPartFour}}\lesssim 2^{-2cl} \int_{\mathbb{R}^3} |f|^2,
\end{equation*}
which means $\|T_l^{\Upsilon_=}f\|_2\lesssim 2^{-cl}\|f\|_2$ as desired. This completes the proof.
\end{proof}

\section{A Bi-parameter Pierce--Yung Operator}  \label{SectionPYBi-Parameter}
We prove Theorem \ref{TheoremPYBi-Parameter} by extending the argument in Section \ref{SectionPY}. By the standard linearizing argument, it suffices to show the following operator is $L^p$ bounded for any fixed measurable functions $\tau_{1, (\cdot)}, \tau_{2, (\cdot)}: \mathbb{R}^3\rightarrow \mathbb{R}$:
\begin{equation*}
\int_{\mathbb{R}^2} f \big( x-\Gamma(y) \big) e^{2\pi i (\tau_{1, x}y_1^3+\tau_{2, x}y_2^3)}  K(y) dy.
\end{equation*}
We can also assume $|\tau_{1, x}|\geq |\tau_{2, x}|$ holds for any $x\in \mathbb{R}^3$. Apply the same decomposition in Section \ref{SectionPY}. It suffices to prove $L^p$ boundedness of the following operator uniformly in $N$:
\begin{equation*}
\sum_{|s|\leq N} \int_{\mathbb{R}^2} f \big( x-\Gamma(y) \big) e^{2\pi i (\tau_{1, x}y_1^3+\tau_{2, x}y_2^3)} K_s(y).
\end{equation*}
for $\hat{f}$ supported in a compact set in $\mathbb{R}^3\backslash \{0\}$. Then we decompose our operator into bi-parameter oscillatory, one-parameter oscillatory, and non-oscillatory parts:
\begin{equation}  \label{EqPYBi-ParameterOscillatoryAndNon-Oscillatory}
\begin{split}
\sum_{|s|\leq N} & \int_{\mathbb{R}^2} f \big( x-\Gamma(y) \big) e^{2\pi i (\tau_{1, x}y_1^3+\tau_{2, x}y_2^3)} K_s(y)  \\
&=\sum_{|s|\leq N} 1_{|\tau_{2, x}|>2^{-3s}}\int_{\mathbb{R}^2} \cdots+\sum_{|s|\leq N} 1_{|\tau_{1, x}|>2^{-3s}\geq |\tau_{2, x}|}\int_{\mathbb{R}^2} \cdots+\sum_{|s|\leq N} 1_{|\tau_{1, x}|\leq 2^{-3s}}\int_{\mathbb{R}^2} \cdots.
\end{split}
\end{equation}
As in Section \ref{SectionPY}, the third part (i.e. non-oscillatory part) in RHS of \eqref{EqPYBi-ParameterOscillatoryAndNon-Oscillatory} can be handled by the same argument for the second part (i.e. non-oscillatory part) in RHS of \eqref{EqPYOscillatoryAndNon-Oscillatory}.
For the second part (i.e. one-parameter oscillatory part) in RHS of \eqref{EqPYBi-ParameterOscillatoryAndNon-Oscillatory}, we compare
\begin{equation*}
\big| \int_{\mathbb{R}^2} f \big( x-\Gamma(y) \big) e^{2\pi i (\tau_{1, x}y_1^3+\tau_{2, x}y_2^3)} K_s(y)-\int_{\mathbb{R}^2} f \big( x-\Gamma(y) \big) e^{2\pi i \tau_{1, x}y_1^3} K_s(y) \big| \lesssim (|\tau_{2, x}|2^{3s}) M^{\Gamma}f(x).
\end{equation*}
Hence it suffices to show the following operator
\begin{equation}  \label{EqPYBi-ParameterOne-ParameterOscillatory}
\sum_{|s|\leq N} 1_{|\tau_{1, x}|>2^{-3s}\geq |\tau_{2, x}|}\int_{\mathbb{R}^2} f \big( x-\Gamma(y) \big) e^{2\pi i \tau_{1, x}y_1^3} K_s(y)
\end{equation}
is $L^p$ bounded. We have
\begin{equation*}
\begin{split}
|\eqref{EqPYBi-ParameterOne-ParameterOscillatory}|&=\big| \sum_{l\geq 1} \sum_{s: |s|\leq N, 2^{-3s}\geq |\tau_{2, x}|} 1_{|\tau_{1, x}|\sim 2^{-3s+l}} \int_{\mathbb{R}^2} f \big( x-\Gamma(y) \big) e^{2\pi i \tau_{1, x}y_1^3} K_s(y) \big|  \\
&\leq \sum_{l\geq 1} \sup_{s\in \mathbb{Z}}\sup_{|\tau|\sim 2^{-3s+l}} |\int_{\mathbb{R}^2} f \big( x-\Gamma(y) \big) e^{2\pi i \tau y_1^3} K_s(y)|.
\end{split}
\end{equation*}
For each $l\geq 1$, we apply Lemma \ref{LemmaMaximalMajorArc} with $P_3(y):=y_1^3$. (Lemma \ref{LemmaMaximalMajorArc} extends to any $1<p<\infty$ by interpolation.) Then summing over $l\geq 1$ yields
\begin{equation*}
\|\eqref{EqPYBi-ParameterOne-ParameterOscillatory}\|_p\lesssim \|f\|_p.
\end{equation*}

It remains to tackle the first part (i.e. bi-parameter oscillatory part) in RHS of \eqref{EqPYBi-ParameterOscillatoryAndNon-Oscillatory}. Further decompose it as follows:
\begin{equation*}
\begin{split}
\text{first part of \eqref{EqPYBi-ParameterOscillatoryAndNon-Oscillatory}} & =\sum_{|s|\leq N} \sum_{l_1\geq l_2\geq 1} 1_{|\tau_{1, x}|\sim 2^{-3s+l_1}, |\tau_{2, x}|\sim 2^{-3s+l_2}}\int_{\mathbb{R}^2} f \big( x-\Gamma(y) \big) e^{2\pi i (\tau_{1, x}y_1^3+\tau_{2, x}y_2^3)} K_s(y) \\
&=\sum_{l_1\geq l_2\geq 1} \Big( 1_{|\tau_{1, x}|\sim 2^{-3s+l_1}, |\tau_{2, x}|\sim 2^{-3s+l_2}}\int_{\mathbb{R}^2} f \big( x-\Gamma(y) \big) e^{2\pi i (\tau_{1, x}y_1^3+\tau_{2, x}y_2^3)} K_s(y) \Big) \\
&=:\sum_{l_1\geq l_2\geq 1} \mathfrak{T}_{l_1, l_2} f(x).
\end{split}
\end{equation*}
Theorem \ref{TheoremPYBi-Parameter} is then implied by the following estimate:
\begin{proposition}  \label{PropositionPYBi-ParameterMaximalMajorArc}
For $1<p<\infty$ and $l_1\geq l_2\geq 1$, we have
\begin{equation*}
\|\mathfrak{T}_{l_1, l_2} f\|_p\lesssim 2^{-c(p)l_1}\|f\|_p.
\end{equation*}
\end{proposition}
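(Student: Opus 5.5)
We follow the proof of Proposition \ref{PropositionPYMaximalMajorArcWeaker} essentially verbatim, with Lemma \ref{LemmaContinuousMajorArcBi-Parameter} in place of Lemma \ref{LemmaContinuousMajorArc}. First, $|\mathfrak{T}_{l_1, l_2}f(x)|\lesssim M^{\Gamma}f(x)$ pointwise, since for each $x$ there is at most one $s$ with $|\tau_{1,x}|\sim 2^{-3s+l_1}$. Hence $\|\mathfrak{T}_{l_1, l_2}f\|_p\lesssim\|f\|_p$ for $1<p<\infty$ by Theorem \ref{TheoremRadonTransforms}. By interpolation it then suffices to prove the $L^2$ bound $\|\mathfrak{T}_{l_1, l_2}f\|_2\lesssim 2^{-cl_1}\|f\|_2$. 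For this we insert the same two-step paraproduct decomposition $1_{\xi\neq0}=\sum_{k,k'}\psi_{-s+k}(\xi_{\leq2})\psi_{-2s+k'}(\xi_3)$. We then split $\mathbb{Z}^2$ into $\Upsilon_<,\Upsilon_{>,\neq},\Upsilon_{>,=},\Upsilon_=$ exactly as before, but with $l$ replaced by $l_1$ throughout.

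For the off-diagonal pieces $\Upsilon_<$ and $\Upsilon_{>,\neq}$ we prove pointwise bounds by $2^{-cl_1}M^{\text{str}}f$. After fixing $x$ and the unique $s$, rescaling $y\mapsto 2^sy$ reduces matters to the kernel estimate \eqref{EqPYOperatorPartOne4} for the phase $\tau_1y_1^3+\tau_2y_2^3-\xi\cdot\Gamma(y)$, where $|\tau_1|\sim 2^{l_1}$ and $|\tau_2|\lesssim 2^{l_1}$. The one new point is the lower bound on the gradient in the $\Upsilon_<$ case. There $|\xi|\leq 2^{l_1-C}$, and we need $|\nabla_y(\tau_1y_1^3+\tau_2y_2^3)|=3|(\tau_1y_1^2,\tau_2y_2^2)|\gtrsim 2^{l_1}$ on $|y|\sim1$. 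This can fail near the axis $y_1=0$, so the plan is to handle it in one of two ways. The first option is to use the second derivative: $|\partial_{y_1}^2(\tau_1y_1^3)|=6|\tau_1y_1|$, together with $\partial_{y_2}$ when $|y_2|\sim1$. Non-stationary phase then fails only on a set where a van der Corput-type bound still gives $2^{-cl_1}$ decay of $b(\xi)$ and of its derivatives. The second option is to split $K_0$ by a smooth cutoff into a region $|y_1|\leq 2^{-\epsilon l_1}$, whose contribution is trivially $O(2^{-\epsilon l_1})M^{\text{str}}f$, and its complement, where $|\tau_1y_1^2|\gtrsim 2^{(1-2\epsilon)l_1}$ and integration by parts applies. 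We expect the second option to be the cleaner one. The $\Upsilon_{>,\neq}$ case is unchanged, since there the gradient is dominated by $|\xi_{\leq2}|$ or $|\xi_3|\gtrsim 2^k$, which beats $|\tau_1|+|\tau_2|\lesssim 2^{l_1}$.

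For the diagonal pieces $\Upsilon_{>,=}$ and $\Upsilon_=$ we proceed as in \eqref{EqPYOperatorPartThree}. For fixed $x$ only one $s$ contributes, so the square of the sum over $s$ equals the sum over $s$ of the squares. For each $s$ we apply the rescaled Lemma \ref{LemmaContinuousMajorArcBi-Parameter}, with parameters $2^{3s}\tau_{j,x}$ and $\lambda\sim 2^{l_1}$ (valid since $\max\{|\tau_1|,|\tau_2|\}=|\tau_1|\sim 2^{-3s+l_1}$), with $\rho=K_0$ and the cutoff $1_{|\tau_{1,x}|\sim2^{-3s+l_1},|\tau_{2,x}|\sim 2^{-3s+l_2}}$. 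The cutoff can be absorbed: redefine the $\tau$'s outside this set to equal, say, $(2^{-3s+l_1},0)$, apply the lemma, then multiply by the indicator. This gives a factor $2^{-2cl_1}$ times the $L^2$ norm of the frequency-localized $f$. The bounded-overlap argument over $(s,k,k')$ is identical to the one in Section \ref{SectionPY}, with $l_1$ in place of $l$, and yields $\|\cdot\|_2\lesssim 2^{-cl_1}\|f\|_2$. The main obstacle is the degenerate-gradient issue in the $\Upsilon_<$ piece described above; everything else carries over.
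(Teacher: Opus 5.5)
\textbf{There is a genuine gap: the piece $\Upsilon_<$ cannot be handled as you propose.} The problem is that you define $\Upsilon_<$ with the threshold $l_1-C$. After rescaling, that piece carries frequencies up to $|\xi|\sim 2^{l_1-C}$. Only $|\tau_1|\sim 2^{l_1}$ is large; $|\tau_2|\sim 2^{l_2}$ can be far smaller. When $l_2\le l_1-C-O(1)$, the phase $\tau_1y_1^3+\tau_2y_2^3-\xi\cdot\Gamma(y)$ has genuine stationary points on $|y|\sim 1$. The equation $3\tau_1y_1^2=\xi_1+2\xi_3y_1$ is solvable for every $|y_1|\lesssim 2^{-C/2}$, not only for $|y_1|\lesssim 2^{-\epsilon l_1}$. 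The $y_2$-equation is solvable because $|\tau_2|\ll|\xi|$.

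Neither of your options works:
\begin{itemize}
\item In option 2, on $\{|y_1|\ge 2^{-\epsilon l_1}\}$ you do have $|\tau_1y_1^2|\gtrsim 2^{(1-2\epsilon)l_1}$. But this does not dominate $|\xi_1+2\xi_3y_1|$, which can be as large as $2^{l_1-C}$, so integration by parts fails there.
\item In option 1, van der Corput with $\partial_{y_1}^3=6\tau_1$ does give $|\partial^\alpha b(\xi)|\lesssim 2^{-l_1/3}$. However, $H$ is an integral of $b$ over a $\xi$-region of volume $\sim 2^{3l_1}$. In the one-parameter proof that volume loss is beaten only by the rapid $2^{-N'l}$ decay from non-stationary phase, which is unavailable here.
\end{itemize}

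In fact no pointwise bound $2^{-cl_1}M^{\text{str}}f$ holds for this piece. Take $\Omega$ not vanishing near $(0,\pm1)$. On $\Gamma(\{|y|\sim 1,\ |y_1|\lesssim 2^{-C/2}\})$ the modulation varies by $O(1)$ over the smoothing scale $2^{-l_1+C}$, so there is no cancellation. Hence $|H|\sim 2^{l_1}$ on a $2^{-l_1}$-neighbourhood of that strip, and $\|H\|_{L^1}\gtrsim 2^{-C/2}$ uniformly in $l_1$. On the other hand, a bound $|\int f(x-t)H(t)\,dt|\le A\,M^{\text{str}}f(x)$ valid for all $f$ forces $\|H\|_{L^1}\le A$. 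A smaller point: the strip $|y_1|\le 2^{-\epsilon l_1}$ is not ``trivially'' $O(2^{-\epsilon l_1})M^{\text{str}}f$. It is a thickened surface piece of height $\sim 2^{l_1}$ and needs parabolic maximal functions.

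\textbf{The missing idea is to tie the low-frequency threshold to $l_2$, not $l_1$.} This uses the information $|\tau_2|\sim 2^{l_2}$, which you discarded by writing $|\tau_2|\lesssim 2^{l_1}$.
\begin{itemize}
\item Since $|\tau_1|\ge|\tau_2|$, on $|y|\sim1$ you have $|\nabla(\tau_1y_1^3+\tau_2y_2^3)|\ge 3|\tau_2|(y_1^4+y_2^4)^{1/2}\gtrsim 2^{l_2}$.
\item So for $\max\{k,k'\}\le l_2-C$, non-stationary phase gives $2^{-N'l_2}$, which beats the $2^{3l_2}$ volume. Van der Corput in $y_1$, via $\partial_{y_1}^3=6\tau_1$, supplies the extra $2^{-cl_1}$, and you interpolate the two bounds.
\item The whole intermediate band $l_2-C<\max\{k,k'\}<l_1+C$ goes into $\Upsilon_=$. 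There you apply Lemma \ref{LemmaContinuousMajorArcBi-Parameter} for each $s$, exactly as you do.
\item The Fourier supports now overlap $O(l_1)$ times as $s$ varies, rather than $O(1)$ times. This gives $\|\mathfrak{T}^{\Upsilon_=}_{l_1,l_2}f\|_2^2\lesssim l_1 2^{-2cl_1}\|f\|_2^2$, which is harmless.
\end{itemize}
This is the paper's decomposition. The intermediate band must be treated by orthogonality, not pointwise. The rest of your plan is fine: the pieces $\Upsilon_{>,\neq}$ and $\Upsilon_{>,=}$, the absorption of the cutoff into the $\tau$'s, and the interpolation with the $M^{\Gamma}$ bound.
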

\begin{proof}
The argument is very similar to the proof of Proposition \ref{PropositionPYMaximalMajorArcWeaker}. First, we only need to show the $p=2$ and $l_1\gtrsim 1$ case. Use the same decomposition to write $\mathfrak{T}_{l_1, l_2}f$ as
\begin{equation}  \label{EqPYBi-ParameterMainDecomposition}
\sum_{|s|\leq N}\sum_{(k, k')\in \mathbb{Z}^2} 1_{|\tau_{j, x}|\sim 2^{-3s+l_j}}\int_{\mathbb{R}^2} \big( \psi_{-s+k}(D_{\leq 2}) \otimes \psi_{-2s+k'} (D_3)f \big) \big( x-\Gamma(y) \big) e^{2\pi i (\tau_{1, x}y_1^3+\tau_{2, x}y_2^3)} K_s(y).
\end{equation}
\end{proof}
For any $\Upsilon\subset \mathbb{Z}^2$, we define $\mathfrak{T}_{l_1, l_2}^{\Upsilon} f$ to be the operator in \eqref{EqPYBi-ParameterMainDecomposition} with $\sum_{(k, k')\in \mathbb{Z}^2}$ replaced by $\sum_{(k, k')\in \Upsilon}$. Fix a large absolute constant $C$. We partition $\mathbb{Z}^2$ into the following four subsets
\begin{align*}
\Upsilon_{<}&:=\big{\{} (k, k')\in \mathbb{Z}^2: \max\{k, k'\}\leq l_2-C \big{\}}, \\
\Upsilon_{>, \neq}&:=\big{\{} (k, k')\in \mathbb{Z}^2: \max\{k, k'\}\geq l_1+C, |k-k'|\geq C \big{\}}, \\
\Upsilon_{>, =}&:=\big{\{} (k, k')\in \mathbb{Z}^2: \max\{k, k'\}\geq l_1+C, |k-k'|<C \big{\}}, \\
\Upsilon_{=}&:=\big{\{} (k, k')\in \mathbb{Z}^2: l_2-C<\max\{k, k'\}<l_1+C \big{\}}.
\end{align*}
Decompose $\mathfrak{T}_{l_1, l_2} f=\mathfrak{T}_{l_1, l_2}^{\Upsilon_{<}}f+\mathfrak{T}_{l_1, l_2}^{\Upsilon_{>, \neq}}f+\mathfrak{T}_{l_1, l_2}^{\Upsilon_{>, =}}f+\mathfrak{T}_{l_1, l_2}^{\Upsilon_{=}}f$ accordingly. We shall prove $\|\mathfrak{T}_{l_1, l_2}^{\Upsilon}f\|_2\lesssim 2^{-\frac{c}{2}l_1}\|f\|_2$ for $\Upsilon=\Upsilon_{<}, \Upsilon_{>, \neq}, \Upsilon_{>, =}, \Upsilon_{=}$ separately.

For $\mathfrak{T}_{l_1, l_2}^{\Upsilon_{<}}f$, it suffices to show
\begin{equation}  \label{EqPYBi-ParameterOperatorPartOne}
|\mathfrak{T}_{l_1, l_2}^{\Upsilon_{<}}f(x)|\lesssim 2^{-cl} M^{\text{str}} f(x),\ \forall x\in \mathbb{R}^3.
\end{equation}
By the same argument as in the proof of Proposition \ref{PropositionPYMaximalMajorArcWeaker}, we only need to prove that the kernel
\begin{equation}  \label{EqPYBi-ParameterOperatorPartOne1}
H_{0, l}^{\tau_1, \tau_2}(t):=\int_{\mathbb{R}^2} e^{2\pi i( \tau_1 y_1^3+\tau_2 y_2^3)} (\phi_{l_2-C}\otimes \phi_{l_2-C})^{\vee} \big( t-\Gamma(y) \big)K_0(y)
\end{equation}
satisfies
\begin{equation}  \label{EqPYBi-ParameterOperatorPartOne2}
|H_{0, l}^{\tau_1, \tau_2}(t)|\lesssim 2^{-cl}(1+|t|)^{-20}.
\end{equation}
for $\tau_j\sim 2^{l_j}$. To prove \eqref{EqPYBi-ParameterOperatorPartOne2}, we plug the Fourier inversion formula of $(\phi_{l_2-C}\otimes \phi_{l_2-C})^{\vee}$ into $H_{0, l}^{\tau_1, \tau_2}(t)$:
\begin{equation}  \label{EqPYBi-ParameterOperatorPartOne3}
H_{0, l}^{\tau_1, \tau_2}(t)=\int_{\mathbb{R}^3} e^{2\pi i \xi\cdot t} \phi(\frac{\xi_{\leq 2}}{2^{l_2-C}}) \phi(\frac{\xi_3}{2^{l_2-C}}) b(\xi) d\xi,
\end{equation}
where
\begin{equation}  \label{EqPYBi-ParameterOperatorPartOne4}
b(\xi):=\int_{\mathbb{R}^2} e^{2\pi i \big( \tau_1 y_1^3+\tau_2 y_2^3-\xi\cdot \Gamma(y) \big) }K_0(y) dy.
\end{equation}
In the support of $\phi(\frac{\xi_{\leq 2}}{2^{l_2-C}}) \phi(\frac{\xi_3}{2^{l_2-C}})$ in RHS of \eqref{EqPYBi-ParameterOperatorPartOne3}, we have
\begin{equation}  \label{EqPYBi-ParameterOperatorPartOne5}
\big| \nabla_{y} \big( \tau ( \tau_1 y_1^3+\tau_2 y_2^3)-\xi\cdot \Gamma(y) \big) \big| \geq c_2|\tau_2|-O(|\xi|)\gtrsim 2^{l_2},\ \forall |y|\sim 1.
\end{equation}
and
\begin{equation} \label{EqPYBi-ParameterOperatorPartOne6}
\big| \partial_1^3\big(( \tau_1 y_1^3+\tau_2 y_2^3)-\xi\cdot \Gamma(y) \big) \big|=6|\tau_1|\sim 2^{l_1},\ \forall |y|\sim 1.
\end{equation}
Apply integration by parts and the van der corput lemma \cite[Proposition 2.1]{SWPolynomialCarleson} to \eqref{EqPYBi-ParameterOperatorPartOne4} separately. We see
\begin{equation*}
|\partial^{\alpha} b(\xi)|\lesssim_{N'} 2^{-cl_1} 2^{-lN'},\ \forall N'\in \mathbb{N}_+.
\end{equation*}
Then integration by parts in \eqref{EqPYBi-ParameterOperatorPartOne3} gives \eqref{EqPYBi-ParameterOperatorPartOne2}, which finishes the proof of \eqref{EqPYBi-ParameterOperatorPartOne}.

For $\mathfrak{T}_{l_1, l_2}^{\Upsilon_{>, \neq}}f$, we have the similar estimate
\begin{equation*} 
|\mathfrak{T}_{l_1, l_2}^{\Upsilon_{>, \neq}}f(x)|\lesssim 2^{-cl_1} M^{\text{str}} f(x),\ \forall x\in \mathbb{R}^3.
\end{equation*}
For this part, the argument in the proof of Proposition \ref{PropositionPYMaximalMajorArcWeaker} does not distinguish the one-parameter and the bi-parameter cases.

Similarly, for $\mathfrak{T}_{l_1, l_2}^{\Upsilon_{>, =}}f$, we argue as in the proof of Proposition \ref{PropositionPYMaximalMajorArcWeaker} but apply Lemma \ref{LemmaContinuousMajorArcBi-Parameter} instead. Then obtain
\begin{equation*}
\|\mathfrak{T}_{l_1, l_2}^{\Upsilon_{>, =}}f\|_2\lesssim 2^{-cl_1}\|f\|_2
\end{equation*}
as desired.

For $\mathfrak{T}_{l_1, l_2}^{\Upsilon_=}f$, we argue as in the proof of Proposition \ref{PropositionPYMaximalMajorArcWeaker} as well. The difference is that \eqref{EqPYOperatorPartFour1} should be replaced by
\begin{equation*}
\{\xi: \frac{1}{100}\cdot 2^{-s+l_2-C} \leq |\xi_{\leq 2}|\leq 100\cdot 2^{-s+l_1+C}\}\cup \{\xi: \frac{1}{100}\cdot 2^{-2s+l_2-C} \leq |\xi_3|\leq 100\cdot 2^{-2s+l_1+C}\}.
\end{equation*}
When $s$ varies, each strip above varies with $O(l_1)$ overlapping. By Plancherel, we deduce
\begin{equation*}
\int_{\mathbb{R}^3} |\mathfrak{T}_{l_1, l_2}^{\Upsilon_=}f|^2 \lesssim 2^{-2cl_1}\cdot O(l_1)\cdot \int_{\mathbb{R}^3} |f|^2\lesssim 2^{-cl_1} \int_{\mathbb{R}^3} |f|^2.
\end{equation*}
This completes the proof of Proposition \ref{PropositionPYBi-ParameterMaximalMajorArc}.

\section{Concluding Remarks} \label{SectionFinalRemarks}
\begin{enumerate}
\item In \cite{PYPolynomialCarlesonRadon, AMPYPolynomialCarlesonRadon}, the authors managed to handle much more general operators than \eqref{EqPY}. It seems not straightforward to extend our approach in Section \ref{SectionPY} to their general setting.

\item The following planar version of \eqref{EqPY} cannot be handled by the Pierce—Yung argument or ours:
\begin{equation*}
\sup_{\tau\in \mathbb{R}} |\int_{\mathbb{R}} f(x_1-y, x_2-y^2) e^{2\pi i \tau y^3} K(y) dy|.
\end{equation*}
In fact, the deduction in Section \ref{SectionPY} works equally well. But the $TT^*$ method in Section \ref{SectionMajorArcLemma} does not work in the planar case. This can already be seen in the finite field setting: the analogue of \eqref{EqFiniteFieldsRangeOfy} uniquely determines $y, y'\in \mathbb{F}_q$, and RHS of \eqref{EqFiniteFieldsSummand3} becomes a complex number with norm $1$. In \cite{BGHCarlesonRadon} and \cite{HLCarlesonRadon}, the authors established $L^p$ boundedness of this planar Pierce—Yung operator via different methods.

\item Our two-step paraproduct decomposition in Section \ref{SectionPY} and Section \ref{SectionPYBi-Parameter} is inspired by the one-step paraproduct decomposition in \cite{CDRSquareCornerSingular}. After finding the short approach in Section \ref{SectionPY}, we learned that this type of decomposition was also used in \cite{BGHCarlesonRadon}. However, the two arguments proceed somewhat differently: ours exploits more frequency orthogonality and only needs the decay $2^{-cl}$. See the estimates after \cite[Theorem 3.2]{BGHCarlesonRadon} for some comparison.

\item Using an inverse version of Lemma \ref{LemmaContinuousMajorArc}, it’s not hard to adapt the argument in Section \ref{SectionFindingPatterns} to solve the analogous pattern finding problem in the continuous setting. In fact, with an inverse theorem similar to Theorem \ref{TheoremFindingPatternsInverse}, we can first argue as in \cite{KMPWWMultilinearPolynomialErgodic} to establish a smoothing inequality. Then apply Bourgain’s energy pigeonholing argument as in \cite{CDRSquareCornerSingular} to find patterns (with a gap estimate) from that smoothing inequality. We omitted the details. There are other possible approaches to establish smoothing inequalities in the continuous setting such as \cite{CGLContinuousPolynomialRoth, CDRSquareCornerSingular, HLSquareCornerSmoothing}. Some of them used special properties of the continuous setting, and such approaches might not be suitable for pattern finding problems in finite fields like Theorem \ref{TheoremFindingPatterns}. (For example, \cite{HLSquareCornerSmoothing} used the asymptotic expansion of oscillatory integrals.)
\end{enumerate}

\bibliographystyle{plain}  
\bibliography{ref}

\Address

\end{document}